\DeclareMathAlphabet\EuFrak{U}{euf}{m}{n}	
\SetMathAlphabet\EuFrak{bold}{U}{euf}{b}{n}	
\newcommand{\lora}{\longrightarrow}
\newcommand{\hra}{\hookrightarrow}
\newcommand{\ovl}{\overline}
\newcommand{\unl}{\underline}
\newcommand{\wa}{\widehat}
\newcommand{\sC}{{\it C*}-}
\newcommand{\bC} {{\mathbb C}}
\newcommand{\bH} {{H}}
\newcommand{\bT} {{\mathbb T}}
\newcommand{\bU} {{\mathbb U}}
\newcommand{\bPU}{{\mathbb{PU}}}
\newcommand{\bZ} {{\mathbb Z}}
\newcommand{\bM} {{\mathbb M}}
\newcommand{\bN} {{\mathbb N}}
\newcommand{\bP} {{\mathbb P}}
\newcommand{\ud}{{{\mathbb U}(d)}}
\newcommand{\sud}{{{\mathbb {SU}}(d)}}
\newcommand{\rmd}{{\mathrm{d}}}
\newcommand{\eps}{\varepsilon}
\newcommand{\mA}{\mathcal A}
\newcommand{\mB}{\mathcal B}
\newcommand{\mC}{\mathcal C}
\newcommand{\mE}{\mathcal E}
\newcommand{\mF}{\mathcal F}
\newcommand{\mG}{\mathcal G}
\newcommand{\mK}{\mathcal K}
\newcommand{\mL}{\mathcal L}
\newcommand{\mO}{\mathcal O}
\newcommand{\mR}{\mathcal R}
\newcommand{\mS}{\mathcal S}
\newcommand{\mT}{\mathcal T}
\newcommand{\efb}{\EuFrak b}
\newcommand{\efg}{\EuFrak g}
\newcommand{\efn}{\EuFrak n}
\newcommand{\efq}{\EuFrak q}
\newcommand{\efu}{\EuFrak u}
\newcommand{\efv}{\EuFrak v}
\newcommand{\lto}{{{\mbox{\tiny $\stackrel{\to}{}$}}}}
\newcommand{\nN}{ G \lto N  }
\newcommand{\aG}{ G \lto {\bf aut}G }
\newcommand{\tend}{{\bf end}\mA}
\newcommand{\oro}{\mO_\rho}
\newcommand{\coe}{{\mO_\mE}}
\newcommand{\wE}{{S \mE}}
\newcommand{\ers}{{\mE^r , \mE^s}}
\newcommand{\rhors}{{\rho^r , \rho^s}}
\newcommand{\hrs}{{ \bH^r , \bH^s }}
\newcommand{\ii}{{\iota,\iota}}
\newcommand{\rr}{{\rho,\rho}}
\newcommand{\rs}{{\rho,\sigma}}
\newcommand{\sr}{{\sigma,\rho}}
\newcommand{\rsp}{{\rho',\sigma'}}
\newcommand{\sss}{{\sigma,\sigma}}
\newcommand{\obc}{{{\bf obj} \ \mC}}
\newcommand{\obt}{{{\bf obj} \ \mT}}
\newcommand{\sym} { {\bf{sym}} }
\newcommand{\mcSUE}{{ \mathcal{SUE} }}
\newcommand{\mcUE }{ \mathcal{UE} }
\newtheorem{thm}{Theorem}[section]
\newtheorem{cor}[thm]{Corollary}
\newtheorem{lem}[thm]{Lemma}
\newtheorem{prop}[thm]{Proposition}
\newtheorem{defn}[thm]{Definition}
\theoremstyle{definition}
\newtheorem{ex}{Example}[section]
\theoremstyle{remark}
\newtheorem{rem}{Remark}[section]
\numberwithin{equation}{section}
\begin{document}

\author{{\sf Ezio Vasselli}
                         \\{\it Dipartimento di Matematica}
                         \\{\it University of Rome "La Sapienza"}
			 \\{\it P.le Aldo Moro, 2 - 00185 Roma - Italy }
                         \\{\sf vasselli@mat.uniroma2.it}}

\title{Bundles of \sC categories, II: \\ \sC dynamical systems and Dixmier-Douady invariants}
\maketitle

\begin{abstract}
We introduce a cohomological invariant arising from a class in nonabelian cohomology. This invariant generalizes the Dixmier-Douady class and encodes the obstruction to a \sC algebra bundle being the fixed-point algebra of a gauge action. As an application, the duality breaking for group bundles vs. tensor \sC categories with non-simple unit is discussed in the setting of Nistor-Troitsky gauge-equivariant $K$-theory: there is a map assigning a nonabelian gerbe to a tensor category, and ``triviality'' of the gerbe is equivalent to the existence of a dual group bundle. At the \sC algebraic level, this corresponds to studying \sC algebra bundles with fibre a fixed-point algebra of the Cuntz algebra and in this case our invariant describes the obstruction to finding an embedding into the Cuntz-Pimsner algebra of a vector bundle.

\

{\em AMS Subj. Class.:} 18D10, 22D25, 14F05, 55N30.

{\em Keywords:} Tensor \sC category; Duality; Cuntz algebra; Group bundle; Gerbe.

\end{abstract}


\section{Introduction.}
\label{intro}

In a series of works in the last eighties, S. Doplicher and J.E. Roberts developed an abstract duality for compact groups, motivated by questions arised in the context of algebraic quantum field theory. In such a scenario, the dual object of a compact group is characterized as a tensor \sC category, namely a tensor category carrying an additional \sC algebraic structure (norm, conjugation). 

At the \sC algebraic level, one of the main discoveries in that setting  has been a machinery performing a duality theory for compact groups in the context of the Cuntz algebra (\cite{Cun77}). If $d \in \bN$ and $( \mO_d , \sigma_d )$ is the Cuntz \sC dynamical system (here $\sigma_d \in {\bf end} \mO_d$ denotes the canonical endomorphism, see \cite[\S 1]{DR87}), then every compact subgroup $G \subseteq \ud$ defines an automorphic action
\begin{equation}
\label{def_cag}
G \to {\bf aut} \mO_d
\ \ , \ \
G \ni g \mapsto \wa G \in {\bf aut} \mO_d
\ \ : \ \
\wa g (\psi_i) := \sum_{j=1}^d g_{ij} \psi_j
\ ,
\end{equation}
where $g_{ij} \in \bC$, $i,j = 1 , \ldots , d$, are the matrix elements of $g$ and $\{ \psi_i \}$ denotes the multiplet of mutually orthogonal partial isometries generating $\mO_d$. Let $\mO_G$ denote the fixed-point algebra of $\mO_d$ w.r.t. the action (\ref{def_cag}). Since $\sigma_d$ commutes with the $G$-action, the restriction $\sigma_G :=$ $\sigma_d |_{\mO_G} \in {\bf end} \mO_G$ is well-defined. The \sC dynamical system $(\mO_G,\sigma_G)$ allows one to reconstruct the following objects: (1) the group $G$, as the stabilizer of $\mO_G$ in ${\bf aut} \mO_d$; (2) The category $\wa G$ of tensor powers of the defining representation $G \hra \ud$, as the category $\wa \sigma_G$ with objects $\sigma_G^r$, $r \in \bN$, and arrows the {\em intertwiner spaces} of $\sigma_G$: 
\begin{equation}
\label{eq_sgrs} 
(\sigma_G^r,\sigma_G^s)
\ := \
\left\{
t \in \mO_G 
\ : \ 
\sigma^s(a) t = t\sigma_G^r(a)
,
a \in \mO_G
\right\}
\ \ , \ \
r,s \in \bN
\ .
\end{equation}
In this way, the map
\begin{equation}
\label{eq_galois}
G \mapsto ( \mO_G , \sigma_G )
\end{equation}
may be considered as a "Galois correspondence" for compact subgroups of $\ud$.

A more subtle question is when a \sC dynamical system $( \mA , \rho )$, $\rho \in {\bf end} \mA$, is isomorphic to $( \mO_G , \sigma_G )$ for some $G \subseteq \ud$. The solution to this problem (for $G$ contained in the special unitary group $\sud$) has been given in \cite[\S 4]{DR89A}: to get the above characterization, natural necessary conditions are the triviality of the centre of $\mA$ and the fact that $\mA$ is generated as a Banach space by the intertwiner spaces $(\rhors)$, $r,s \in \bN$; a more crucial condition is the existence of an intertwiner $\eps \in$ $( \rho^2 , \rho^2 )$, $\eps =$ $\eps^{-1}=$ $\eps^*$ (the {\em symmetry}), providing a representation $\bP_\infty \to \mA$ of the infinite permutation group and implementing suitable flips between elements of $( \rhors )$, $r,s \in \bN$. This structure is an abstract counterpart of the flip operator $\theta ( \psi \otimes \psi' ) :=$ $\psi' \otimes \psi$, $\psi , \psi' \in \bH$, where $\bH$ is the Hilbert space of dimension $d$.

In this way, a group $G \subseteq \sud$ is associated with $( \mA , \rho ,\eps )$ and the intertwiner spaces of $\rho$ are interpreted as $G$-invariant operators between tensor powers of $\bH$. In this sense $G$ is the {\em gauge group} associated with $( \mA , \rho , \eps )$, according to the motivation of Doplicher and Roberts (\cite{DR90}). The correspondence $( \mA , \rho , \eps ) \mapsto G$ is functorial: groups $G , G' \subseteq \sud$ are conjugates in $\ud$ if and only if there is an isomorphism $\alpha : ( \mA , \rho , \eps ) \to$ $( \mA' , \rho' , \eps' )$ of {\em pointed \sC dynamical systems}, in the sense that the conditions
$\alpha \circ \rho =$ $\rho' \circ \alpha$,
$\alpha (\eps) = \eps'$,
are fulfilled. As we shall see in the sequel, the previous conditions are equivalent to require an isomorphism of symmetric tensor \sC categories naturally associated with our \sC dynamical systems.

\

Our research program focused on the study of tensor \sC categories with non-simple unit. This means that the space of arrows of the identity object $\iota$ is isomorphic to an Abelian \sC algebra $C(X)$ for some compact Hausdorff space $X$. Thus the model category, rather than the one of Hilbert spaces, is the one of Hermitian vector bundles over $X$, that we denote by ${\bf vect}(X)$. In a previous work (\cite{Vas06}), we proved that every tensor \sC category with symmetry and conjugates can be regarded in terms of a bundle of \sC categories over $X$, with fibres duals of compact groups (see also \cite{Zit05}). By applying a standard technique, we associate pointed \sC dynamical systems of the type $( \mA , \rho , \eps )$ with objects of these categories; as a consequence of the above-mentioned results, each $( \mA , \rho , \eps )$ is a continuous bundle of \sC algebras with base $X$ and fibres pointed \sC dynamical systems $( \mO_{G_x} , \sigma_{G_x} , \theta_x )$, $x \in X$.

Starting from this result, it became natural to search for a classification of {\em locally trivial} pointed \sC dynamical systems $( \mA , \rho , \eps )$ with fibre $( \mO_G , \sigma_G , \theta)$, $G \subseteq \sud$. In the first paper of the present series, we gave such a classification in terms of the cohomology set $H^1(X,QG)$, $QG := NG \backslash G$, where $NG$ is the normalizer of $G$ in $\ud$ (\cite{Vas06}). In this way, $QG$-cocycles $\efq \in$ $H^1(X,QG)$ are put in one-to-one correspondence with pointed \sC dynamical systems $( \mO_\efq , \rho_\efq , \eps_\efq )$. From a different --but equivalent-- point of view, $H^1(X,QG)$ describes the set $\sym(X,\wa G)$ of isomorphism classes of "locally trivial" symmetric tensor \sC categories with fibre $\wa G$ and such that $(\ii) \simeq$ $C(X)$.

In the present paper we study the Galois correspondence (\ref{eq_galois}) and the associated abstract version in the case where $X$ is nontrivial. Instead of $\mO_d$, our reference algebra is the Cuntz-Pimsner algebra $\coe$ associated with the module of sections of a vector bundle $\mE \to X$, which yields a pointed \sC dynamical system $( \coe , \sigma_\mE , \theta_\mE )$. If $\mG \to X$ is a bundle of unitary automorphisms of $\mE$, then we can construct a pointed \sC dynamical system $( \mO_\mG , \sigma_\mG , \theta_\mE )$, $\mO_\mG \subseteq \coe$, from which it is possible to recover $\mG$ with the same method used for compact subgroups of $\ud$.
 
This leads to a duality for elements of $\sym(X,\wa G)$ vs. $G$-bundles acting on vector bundles in the sense of Nistor and Troitsky (\cite{NT04}). Anyway, what we get is not a generalization of the Doplicher-Roberts construction, as new phenomena arise. First, in general it is false that a category with fibre $\wa G$ is the dual of a $G$-bundle; the reason is a cohomological obstruction to the embedding into ${\bf vect}(X)$: in \sC algebraic terms, there are pointed \sC dynamical systems $( \mO_\efq , \rho_\efq , \eps_\efq )$ which do not admit an embedding into some $( \coe , \sigma_\mE , \theta_\mE )$. Secondly, an element of $\sym(X,\wa G)$ may be realized as the dual of non-isomorphic $G$-bundles: at the \sC algebraic level, we may get isomorphisms $( \mO_\efq , \rho_\efq , \eps_\efq ) \simeq$ $( \mO_\mG , \sigma_\mG , \theta_\mE )$, $( \mO_\efq , \rho_\efq , \eps_\efq ) \simeq$ $( \mO_{\mG'} , \sigma_{\mG'} , \theta_{\mE'} )$, with $\mG$ not isomorphic to $\mG'$ and $\mE$ not isomorphic to $\mE'$. In the present work we give a explanation of these facts in terms of properties of $H^1(X,QG)$, providing a complete geometrical characterization of $\sym(X,\wa G)$ for what concerns the duality theory.

\

The above-mentioned cohomological machinery has its roots in the general framework of principal bundles and can be applied to generic \sC algebra bundles. Let $G$ be a group of automorphisms of a \sC algebra $\mF_\bullet$ and $\mA_\bullet$ denote the fixed point algebra w.r.t. the $G$-action. It is natural to ask whether an $\mA_\bullet$-bundle $\mA$ admits an embedding into some $\mF_\bullet$-bundle. In general, the answer is negative and the obstruction is measured by a class
\begin{equation}
\label{def_dd}
\delta (\mA) \in H^2( X, {G'} ) 
\ ,
\end{equation}
where ${G'}$ is an Abelian quotient of $G$. When the above-mentioned embedding exists, $\mA$ is the fixed-point algebra w.r.t. a {\em gauge-action} of a group bundle $\mG \to X$ with fibre $G$ on an $\mF_\bullet$-bundle, in the sense of \cite{Vas07}. The above-mentioned obstruction for bundles with fibre $( \mO_G , \sigma_G , \theta )$ and the classical Dixmier-Douady invariant for bundles with fibre the compact operators (\cite[Ch.10]{Dix}), are particular cases of this construction.

\

The present work is organized as follows.

In \S\ref{sec_sp_cu} we recall some results relating pointed \sC dynamical systems with tensor \sC categories. Moreover, under the hypothesis that the inclusion $G \subseteq \ud$ is {\em covariant} (i.e., the embedding of $\wa G$ into the category of tensor powers of $\bH$ is unique up to unitary natural transformations), we give a geometrical characterization of the space of embeddings of $\mO_G$ into $\mO_d$ (Lemma \ref{cor_dual_od}) and a cohomological classification for $\sym(X,\wa G)$ (Thm.\ref{thm_amen2}). Note that every inclusion $G \subseteq \sud$ is covariant (in essence, this is proved in \cite[Lemma 6.7]{DR89}).

In \S\ref{sec_pb} we define some cohomological invariants for principal bundles. Given an exact sequence of topological groups $G \to NG \stackrel{p}{\to} QG$ and a space $X$, we consider the induced map of cohomology sets $p_* : H^1(X,NG) \to$ $H^1(X,QG)$ and construct a class $\delta (\efq) \in H^2(X,{G'})$ vanishing when $\efq$ is in the image of $p_*$. Moreover, a nonabelian $G$-gerbe $\breve \mG$ is associated with $\efq$, collapsing to a group bundle $\mG$ when $\efq$ is in the image of $p_*$.
Finally, for each $G \subseteq \sud$ we define a {\em Chern class} $c(\efq) \in H^2(X,\bZ)$ (Lemma \ref{lem_cherng}).

In \S\ref{sec_bu_co} we give some properties of gauge \sC dynamical systems and apply to them the construction of the previous section. In this way we construct the class (\ref{def_dd}), that we apply to pointed \sC dynamical systems (Lemma \ref{lem_qg}, Thm.\ref{thm_dd2}). The relation with the classical Dixmier-Douady invariant is discussed in Prop.\ref{ex_dd}.

In \S\ref{sec_gd} we prove a concrete duality for group bundles with fibre $G \subseteq \ud$. Let $\mE \to X$ be a rank $d$ vector bundle, $\wa \mE$ denote the category with objects the tensor powers $\mE^r$, $r \in \bN$, and arrows the spaces $(\ers)$ of bundle morphisms; then $\wa \mE$ is a symmetric tensor \sC category with $(\ii) \simeq C(X)$. We consider a group bundle $\mG \to X$ with a gauge action $\mG \times_X \mE \to \mE$ and define a symmetric tensor \sC subcategory $\wa \mG$ of $\wa \mE$, with arrows $\mG$-equivariant morphisms $(\ers)_\mG$, $r,s \in \bN$. We establish a one-to-one correspondence between tensor \sC subcategories of $\wa \mE$ and gauge actions (Prop.\ref{rem_dual}). Tensor \sC subcategories of $\wa \mE$ with fibre $\wa G$ are in one-to-one correspondence with reductions to $NG$ of the structure group of $\mE$ (Thm.\ref{thm_str_gr}): this yields a link between the categorical structure of $\wa \mE$ and the geometry of $\mE$.

In \S\ref{class} we discuss the breaking of abstract duality for categories $\mT$ with fibre $\wa G$. Isomorphism classes $[\mT] \in \sym(X,\wa G)$ such that there is an embedding $\eta : \mT \hra {\bf vect}(X)$ are in one-to-one correspondence with elements of the set 
$p_*(H^1(X,NG)) \subseteq H^1(X,QG)$ 
(Thm.\ref{thm_emb}). For each $\eta$ there is a vector bundle $\mE_\eta \to X$ and a $G$-bundle $\mG_\eta \to X$ acting on $\mE_\eta$ such that $\mT$ is isomorphic to $\wa \mG_\eta$. Applying the results of \S\ref{sec_pb}, we assign a class $\delta(\mT) \in H^2(X,{G'})$: if there is an embedding $\mT \to {\bf vect}(X)$ then $\delta (\mT)$ vanishes, and when such an embedding does not exist the role of the dual $G$-bundle is played by a $G$-gerbe (Thm.\ref{thm_ci}). Finally, we discuss the cases $G = \sud$ (Ex.\ref{ex_sud}, Ex.\ref{ex_su2}), $G = \bT$ (Ex.\ref{ex_T}) and $G = R_d$ (Ex.\ref{ex_rn}, $R_d$ denotes the group of roots of unity).

\section{Preliminaries.}

\subsection{Keywords and Notation.}
\label{sec_key}

Let $X$ be a locally compact Hausdorff space. If $\left\{ X_i \right\}$ is a cover of $X$, then we define $X_{ij} := X_i \cap X_j$, $X_{ijk} := X_i \cap X_j \cap X_k$. Moreover, we denote the \sC algebra of continuous functions on $X$ vanishing at infinity by $C_0(X)$; if $X$ is compact, then we denote the \sC algebra of continuous functions on $X$ by $C(X)$. If $U \subset X$ is open, then we denote the ideal in $C_0(X)$ (or $C(X)$) of functions vanishing in $X-U$ by $C_0(U)$. If $W \subset X$ is closed, then we define $C_W(X) :=$ $C_0(X-W)$; in particular, for every $x \in X$ we set $C_x(X) :=$ $C_0( X - \left\{ x \right\} )$. Since in the present paper we shall deal with \v Cech cohomology, we assume that every space has {\em good} covers (i.e. each $X_{ij}$, $X_{ijk}$, $\ldots$, is empty or contractible).

Let $\mA$ be a \sC algebra. We denote the set of automorphisms (resp. endomorphisms) of $\mA$, endowed with pointwise convergence topology, by ${\bf aut} \mA$ (resp. $\tend$). 
A pair $( \mA , \rho)$, with $\rho \in \tend$, is called \sC dynamical system. If $(\mA , \rho)$, $(\mA' , \rho')$ are \sC dynamical systems, then a \sC algebra morphism $\alpha : \mA \to \mA'$ such that $\alpha \circ \rho = \rho' \circ \alpha$ is 
denoted by $\alpha : ( \mA , \rho ) \to ( \mA' , \rho' )$. In particular, if $a \in \mA$, $a' \in \mA'$ and $\alpha (a) = a'$, then we write $\alpha : ( \mA , \rho , a ) \to $ $( \mA' , \rho' , a' )$ and refer to $\alpha$ as a morphism of {\em pointed \sC 
dynamical systems}. We denote the group of automorphisms of the pointed \sC dynamical system $(\mA , \rho , a)$ by ${\bf aut}(\mA , \rho , a)$.

Let $X$ be a locally compact Hausdorff space. A $C_0(X)${\em -algebra} is a \sC algebra $\mA$ endowed with a nondegenerate morphism from $C_0(X)$ into the centre of the multiplier algebra $M(\mA)$. It is customary to assume that such a morphism is injective, thus $C_0(X)$ will be regarded as a subalgebra of $M(\mA)$. For every $x \in X$, we define the {\em fibre epimorphism} as the quotient $\pi_x : \mA \to$ $\mA_x :=$ $\mA / (C_x(X)\mA)$ and call $\mA_x$ the fibre of $\mA$ over $x$. The group of $C_0(X)$-automorphisms of $\mA$ is denoted by ${\bf aut}_X \mA$. The {\em restriction} of $\mA$ on an open $U \subset X$ is given by the closed ideal obtained multiplying elements of $\mA$ by elements of $C_0(U)$, and is denoted by $\mA_U := C_0(U)\mA$.
We denote the (spatial) $C_0(X)$-tensor product by $\otimes_X$ (see \cite[\S 1.6]{Kas88}, where the notation ``$C(X)$'' is used to mean $C_0(X)$).
Examples of $C_0(X)$-algebras are continuous bundles of \sC algebras in the sense of \cite{KW95,Dix}; we refer to the last reference for the notion of {\em locally trivial} continuous bundle. Let $\mA_\bullet$ be a \sC algebra; to be concise, we will call {\em $\mA_\bullet$-bundle} a locally trivial continuous bundle of \sC algebras with fibre $\mA_\bullet$; to avoid confusion with bundles in the topological setting, we emphasize the fact that an $\mA_\bullet$-bundle is indeed a \sC algebra.

For standard notions about {\em vector bundles}, we refer to the classics \cite{Ati,Kar,Seg68}. In the present work, we will assume that every vector bundle is endowed with a Hermitian structure. We shall also consider {\em Banach bundles} (see \cite{Dup74},\cite[Ch.10]{Dix}).

For basic properties of fibre bundles and principal bundles, we refer to \cite[Ch.4,6]{Hus}, \cite[I.3]{Hir}.
If $p : Y \to X$ is a continuous map (i.e., a {\em bundle}), then we say that $p$ {\em has local sections} if for every $x \in X$ there is a neighbourhood $U \ni x$ and a continuous map $s : U \to Y$ such that $p \circ s = id_U$. If $p' : Y' \to X$ is a continuous map, then the {\em fibred product} is defined as the space
$Y \times_X Y' := \{ (y,y') \in Y \times Y' : p(y) = p'(y') \}$.
An expository introduction to nonabelian cohomology and gerbes is \cite{BS08}, where a good list of references is provided.

For basic properties of \sC categories and tensor \sC categories, we refer to \cite{DR89}. In particular, we make use of the terms {\em \sC functor, \sC epifunctor, \sC monofunctor, \sC isofunctor, \sC autofunctor} to denote functors preserving the \sC structure.

For every $r \in \bN$ we denote the permutation group of order $r$ by $\bP_r$ and the infinite permutation group by $\bP_\infty$, which is endowed with natural inclusions $\bP_s \subset \bP_\infty$, $s \in \bN$. For every $r,s \in \bN$, we denote the permutation exchanging the first $r$ objects with the remaining $s$ objects by $(r,s) \in \bP_{r+s}$.

\subsection{Bundles of \sC categories.}

A {\em \sC category} $\mC$ is a category having Banach spaces as sets of arrows and endowed with an involution $* : (\rs) \to (\sr)$, $\rs \in \obc$, such that the \sC identity $\left\| t^* \circ t \right\| =$ $\left\| t \right\|^2$, $t \in (\rs)$, is fulfilled. 
In this way, each $(\rr)$, $\rho \in \obc$, is a \sC algebra, whilst $(\rs)$ a Hilbert $(\sss)$-$(\rr)$-bimodule (see \cite{Vas06,KPW04}). 
In the present work we will consider \sC categories not necessarily endowed with {\em identity arrows} $1_\rho \in (\rr)$ (see \cite[\S 2.1]{Mit02}). In this setting, $(\rr)$ is not necessarily unital and we denote the multiplier algebra by $M(\rr)$.

Let $\mC$ be a \sC category and $X$ a locally compact Hausdorff space. $\mC$ is said to be a {\em $C_0(X)$-category} whenever there is a family $\left\{ i_\rho , \rho \in \obc\right\}$ of non-degenerate morphisms $i_\rho : C_0(X) \to M(\rr)$, called the $C_0(X)$-{\em structure}, such that 
\[
t \circ i_\rho(f) = i_\sigma (f) \circ t
\ \ , \ \
\rho , \sigma \in \obc 
\ , \ 
t \in (\rs)
\ , \ 
f \in C_0(X)
\ .
\]
The previous equality implies that each $(\rr)$, $\rho \in \obc$, is a $C_0(X)$-algebra. We assume that each $i_\rho$ is injective and write $ft := i_\sigma(f) \circ t$, $f \in C_0(X)$, $t \in (\rs)$. Functors preserving the $C_0(X)$-structure are called $C_0(X)$-{\em functors}.

If $U \subseteq X$ is open, then we define the {\em restriction on} $U$ as the \sC category $\mC_U$ having the same objects as $\mC$ and spaces of arrows $(\rs)_U :=$ $C_0(U) (\rs)$; note that $\mC_U$ may lacks identity arrows also when $\mC$ has identity arrows. If $W$ is closed, then we denote the \sC category having the same objects as $\mC$ and spaces of arrows the quotients $( \rs )_W :=$ $(\rs) \ / \left(  C_0(X-W) (\rs)  \right)$ by $\mC_W$; the corresponding \sC epifunctor $\pi_W : \mC \to \mC_W$ is called the {\em restriction functor}.
In particular, we define the {\em fibre} of $\mC$ over $x$ as $\mC_x :=$ $\mC_{\left\{ x \right\}}$ and call $\pi_x : \mC \to \mC_x$ the {\em fibre functor}. 
For every $\rs \in \obc$, $t \in (\rs)$, we define the {\em norm function} $n_t(x) :=$ $\left\| \pi_x (t) \right\|$, $x \in X$. It can be proved that $n_t$ is upper semicontinuous for each arrow $t$; when each $n_t$ is continuous, we say that $\mC$ is a {\em continuous bundle} over $X$. In this case, each $(\rs)$ is a continuous field of Banach spaces over $X$ and each $(\rr)$ is a continuous bundle of \sC algebras.

Let $\mC_\bullet$ be a \sC category. The {\em constant bundle} $X \mC_\bullet$ is the $C_0(X)$-category having the same objects as $\mC_\bullet$ and arrows the spaces $(\rs)^X$ of continuous maps vanishing at infinity from $X$ to $(\rs)$, $\rs \in \obc_\bullet$. 
A $C_0(X)$-category $\mC$ is said to be {\em locally trivial} whenever for each $x \in X$ there is an open neighbourhood $U \ni x$ with a $C_0(U)$-isofunctor 
$\alpha_U : \mC_U \to U \mC_\bullet$,
such that the induced map $\alpha_U : \obc \to {\bf obj} \ \mC_\bullet$ does not depend on the choice of $U$. The functors $\alpha_U$ are called {\em local charts}.

When $X$ is compact, the same constructions apply with the obvious modifications. 

%
%

\section{Tensor \sC categories and \sC dynamical systems.}
\label{sec_sp_cu}

The present section has two purposes. First, in order to make the present paper enough self-contained, we collect some results from \cite{DR87,Vas06} in a slightly different form and recall the notions of {\em special category} and {\em embedding functor}. Secondly, we describe the space of certain embedding functors in terms of a principal bundle (Lemma \ref{cor_dual_od}) and provide a classification result for bundles with fibre $\wa G$, $G \subset \ud$ (Thm.\ref{thm_amen2}); these results shall be applied in \S \ref{sec_gd}.

A {\em tensor} \sC category is a \sC category $\mT$ with identity arrows endowed with a \sC bifunctor $\otimes : \mT \times \mT \to$ $\mT$, called the {\em tensor product}. For brevity, we denote the tensor product of objects $\rs \in \obt$ by $\rho \sigma$, whilst the tensor product of arrows $t \in (\rs)$, $t' \in (\rsp)$, is denoted by $t \otimes t' \in$ $(\rho \rho' , \sigma \sigma')$.
We assume the existence of an {\em identity object} $\iota \in \obt$ such that $\iota \rho = \rho \iota = \rho$, $\rho \in \obt$: it can be easily verified that $(\ii)$ is an Abelian \sC algebra and every space of arrows $(\rs)$ is a Banach $(\ii)$-bimodule w.r.t. the operation of tensoring with arrows in $(\ii)$.

Let $X^\iota$ denote the spectrum of $\iota$; then $\mT$ is a $C(X^\iota)$-category in a natural way. In particular, it can be proved that $\mT$ is a continuous bundle if certain additional assumptions are satisfied (\cite{Zit05,Vas06}).

A tensor \sC category whose objects are $r$-fold tensor powers of an object $\rho$, $r \in \bN$, is denoted by $( \wa \rho , \otimes , \iota )$; for $r = 0$, we use the convention $\rho^0 := \iota$. In the sequel of the present work, we shall need to keep in evidence an arrow $a \in (\rhors)$ for some $r,s \in \bN$, so that we introduce the notation $( \wa \rho , \otimes , \iota , a )$. Moreover, we denote {\em tensor} \sC functors $\alpha : \wa \rho \to \wa \rho'$ (the term {\em tensor} means that $\otimes' \circ ( \alpha \times \alpha ) =$ $\alpha \circ \otimes$, $\alpha(\iota) =$ $\iota'$) such that $\alpha(a) = a'$ by
\[
\alpha : ( \wa \rho , \otimes , \iota , a ) \to ( \wa{\rho'} , \otimes' , \iota' , a' )
\ .
\]
If $( \mA , \rho , a)$ is a pointed \sC dynamical system, then the category $\wa \rho$ with objects the powers $\rho^r$, $r \in \bN$, and arrows the intertwiner spaces $(\rhors)$, $r,s \in \bN$, endowed with the tensor product
\[
\rho^r \rho^s := \rho^{r+s}
\ \ , \ \
t \otimes t' := t \rho^r(t')
\ \ , \ \
t \in (\rhors) \ , \ ( \rho^{r'} , \rho^{s'} )
\ ,
\]
is an example of such singly generated tensor \sC categories with a distinguished arrow. We denote the \sC algebra generated by the intertwiner spaces $(\rhors)$, $r,s \in \bN$, by $\oro$.

Actually, every tensor \sC category $( \wa \rho , \otimes , \iota )$ comes associated with a \sC dynamical system, in the following way (see \cite[\S 4]{DR89} for details). As a first step, we consider the maps $j_{r,s}(t) :=$ $t \otimes 1_\rho \in$ $( \rho^{r+1} , \rho^{s+1} )$, $t \in (\rhors)$ and define the Banach spaces 
$\oro^k :=$ $\lim_{\to r} ( (\rho^r,\rho^{r+k}) \ , \  j_{r,r+k} )$, $k \in \bZ$.
As a second step, we note that composition of arrows and involution induce a well-defined *-algebra structure on the direct sum $^0 \oro :=$ $\oplus_k \oro^k$. It can be proved that there is a unique \sC norm on $^0 \oro$ such that the {\em circle action} $\wa z (t) :=$ $z^k t$, $z \in \bT$, $t \in \oro^k$, extends to an automorphic action. In this way, the so-obtained \sC completion $\oro$ comes equipped with a continuous action $\bT \to {\bf aut} \oro$ with spectral subspaces $\oro^k$, $k \in \bN$, and also with a {\em canonical endomorphism} 
\[
\rho_* \in {\bf end} \oro 
\ \ , \ \
\rho_* (t) := 1_\rho \otimes t 
\ \ , \ \
t \in ( \rhors )
\ ,
\]
such that $\rho_* \circ \wa z = \wa z \circ \rho_*$, $z \in \bT$.
The pair $( \oro , \rho_* )$ is called the {\em DR-dynamical system associated with} $\rho$. {\em Since the maps $j_{r,s}$ are injective in all the cases of interest in the present work, in the sequel we will identify $t \in (\rhors)$ with the corresponding element of $\oro$}.

By construction we have $(\rhors) \subseteq$ $( \rho_*^r , \rho_*^s )$, $r,s \in \bN$. We say that $\rho$ is {\em amenable} if $(\rhors) =$ $( \rho_*^r , \rho_*^s )$, $r,s \in \bN$, and in that case $\wa \rho$ is said to be {\em amenably generated}. We summarize the above considerations in the following theorem, which also includes a reformulation of \cite[Prop.19]{Vas06}:
\begin{thm}
\label{thm_oro}
The map $( \wa \rho , \otimes , \iota , a ) \to$ $( \oro , \rho_* , a )$ defines a one-to-one correspondence between the class of amenably generated tensor \sC categories with a distinguished arrow and the class of pointed \sC dynamical systems $( \mA , \sigma , a )$ such that $\mA$ is generated by the intertwiner spaces of $\sigma$. Tensor \sC functors
$\alpha : 
( \wa \rho , \otimes , \iota , a ) \to 
( \wa{\rho'} , \otimes' , \iota' , a' )$ 
are in one-to-one correspondence with morphisms $\alpha : ( \oro , \rho_* , a ) \to$ $( \mO_{\rho'} , \rho'_* , a' )$ of pointed \sC dynamical systems. The category $\wa \rho$ is a continuous bundle over the spectrum $X^\iota$ of $( \ii )$ if and only if $\oro$ is a continuous bundle over $X^\iota$. If $\wa \rho$ is locally trivial as a bundle of \sC categories, then $\oro$ is locally trivial as a \sC algebra bundle.
\end{thm}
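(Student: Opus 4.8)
The plan is to establish the four assertions of \thmref{thm_oro} in sequence, treating the bijection on objects/arrows first and then the topological (bundle) statements, since the latter build on the explicit description of $\oro$ as a $\bT$-$C*$-completion of $^0\oro = \oplus_k \oro^k$ obtained from the inductive limits $\oro^k = \lim_{\to r}((\rho^r,\rho^{r+k}), j_{r,r+k})$.

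First I would construct the inverse of the map $(\wa\rho,\otimes,\iota,a)\mapsto(\oro,\rho_*,a)$. Given a pointed $C*$-dynamical system $(\mA,\sigma,a)$ with $\mA$ generated by the intertwiner spaces $(\sigma^r,\sigma^s)$, one recovers a tensor $C*$-category $\wa\sigma$ with objects $\sigma^r$, arrows $(\sigma^r,\sigma^s)$, tensor product $t\otimes t' := t\,\sigma^r(t')$, identity object $\iota := \sigma^0$, and distinguished arrow $a$. The nontrivial point is that passing from $\wa\sigma$ back to its DR-system returns $(\mA,\sigma,a)$; here I would invoke the identification of $t\in(\sigma^r,\sigma^s)$ with its image in $\oro$ (the paragraph on injectivity of $j_{r,s}$) together with the hypothesis that $\mA$ is \emph{generated} by these intertwiner spaces, so that the obvious $*$-homomorphism $^0\oro\to\mA$ is surjective; injectivity follows because the $\bT$-action on $\mA$ (coming from amenability / the grading) has spectral subspaces matching $\oro^k$, forcing the $C*$-norms to agree by uniqueness of the norm making $\wa z$ continuous. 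Conversely, starting from $(\wa\rho,\otimes,\iota,a)$, amenable generation gives $(\rhors)=(\rho_*^r,\rho_*^s)$, so $\wa{\rho_*}\cong\wa\rho$ as tensor $C*$-categories preserving $a$, and this matches \cite[Prop.19]{Vas06}. The functoriality statement is then essentially formal: a tensor $C*$-functor $\alpha:\wa\rho\to\wa{\rho'}$ respects the maps $j_{r,s}$ (since $\alpha(1_\rho)=1_{\rho'}$ and $\alpha$ is monoidal), hence induces maps $\oro^k\to\mO_{\rho'}^k$ compatible with composition, involution and the circle actions, giving $\alpha_*:(\oro,\rho_*,a)\to(\mO_{\rho'},\rho'_*,a')$; and any such morphism restricts to the spectral subspaces $\oro^k$, whence to the intertwiner spaces, recovering a tensor $C*$-functor. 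Checking that these two assignments are mutually inverse is routine bookkeeping.

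Next I would address the bundle statements. The $C(X^\iota)$-structure on $\wa\rho$ (from $(\ii)$) is carried by the maps $j_{r,s}$, $\otimes$, and $*$ into a $C(X^\iota)$-structure on $^0\oro$ and then onto $\oro$, since the circle action commutes with multiplication by $C(X^\iota)$; thus $\oro$ is a $C(X^\iota)$-algebra with fibres $\oro_x$ that are precisely the DR-systems of the fibre categories $(\wa\rho)_x$ (restriction commutes with the inductive limit because each $j_{r,s}$ is a $C(X^\iota)$-module map and $\oplus_k$, $\lim_\to$ are exact on the relevant category). Hence for an arrow $t\in(\rhors)$ viewed in $\oro$, the norm function $n_t(x)=\|\pi_x(t)\|$ computed in $\oro$ equals the one computed in $\wa\rho$; since $\oro$ is generated as a Banach space by such $t$ (together with their adjoints, which live in $\oro^{-k}$ and have the same norm functions), upper semicontinuity of all norm functions on $\oro$ is equivalent to that on $\wa\rho$, and likewise continuity: if every $n_t$ on $\wa\rho$ is continuous then so is $n_t$ for $t$ homogeneous, and continuity extends to finite sums and then to norm-limits in $\oro$ by a standard $\eps/3$ argument, giving $\oro$ a continuous-bundle structure; the converse restricts continuity from $\oro$ back to the subspaces $(\rhors)$. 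For local triviality, a local chart $\alpha_U:\wa\rho_U\to U\wa\rho_\bullet$ is a $C_0(U)$-tensor-isofunctor, so by the functoriality already proved it induces $C_0(U)$-isomorphisms $(\oro)_U\to U\,\mO_{\rho_\bullet}$ compatible with $\rho_*$ and the $\bT$-action, which is exactly the statement that $\oro$ is locally trivial as a $C*$-algebra bundle with fibre $\mO_{\rho_\bullet}$.

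The main obstacle I anticipate is the faithfulness/norm-uniqueness half of the bijection: showing the canonical surjection $^0\oro\to\mA$ is injective, i.e. that the $C*$-norm $\mA$ induces on $^0\oro$ coincides with the unique norm making the circle action continuous. This is where the hypothesis ``$\mA$ generated by the intertwiner spaces of $\sigma$'' (plus the implicit amenability / grading data) must be used in an essential way, presumably by invoking the conditional expectation onto the fixed-point algebra of the $\bT$-action on $\mA$ and comparing it with the canonical one on $\oro$; the rest of the argument is a careful but routine transport of structure through the inductive-limit construction. I would also be slightly careful about the passage between $\mathbb N$ and $\mathbb Z$ gradings and about arrows in $\oro^k$ with $k<0$ when checking the norm-function statements, but these are not serious difficulties.
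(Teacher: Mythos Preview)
The paper does not supply a proof of this theorem: it is stated as a summary of the construction carried out in the preceding paragraphs (the DR-system $(\oro,\rho_*)$ built from the inductive limits $\oro^k$ and the canonical endomorphism) together with a citation of \cite[Prop.~19]{Vas06} for the bundle statements. Your sketch is therefore not so much an alternative as a fleshing-out of what the paper leaves implicit, and the overall line --- build the inverse $(\mA,\sigma,a)\mapsto\wa\sigma$, check functoriality through the $\bT$-spectral decomposition, then transport the $C(X^\iota)$-structure through the inductive limit --- is exactly the one the paper's exposition points to.

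One remark on the part you flag as the main obstacle. Your worry about injectivity of $^0\oro\to\mA$ and norm uniqueness is legitimate, but note that the paper's formulation of the norm on $\oro$ is precisely ``the unique \sC norm on $^0\oro$ such that the circle action $\wa z(t)=z^k t$ extends to an automorphic action''; since any $\mA$ generated by the intertwiner spaces of $\sigma$ carries such a circle action (defined on the dense span of the $(\sigma^r,\sigma^s)$ by the same formula and isometric because each $\wa z$ restricts to a scalar on the homogeneous pieces), the induced norm on $^0\oro$ automatically has this property and hence coincides with the canonical one. So the argument you outline via conditional expectations is correct but slightly heavier than needed: the uniqueness clause in the construction of $\oro$ already does the work. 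The remaining bundle statements (continuity via norm functions on generators and the $\eps/3$ extension, local triviality via functoriality applied to local charts) are handled correctly in your sketch and match the content of \cite[Prop.~19]{Vas06}.
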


A tensor \sC category $( \mT , \otimes , \iota )$ is said to be {\em symmetric} if there is a family of unitary operators $\eps (\rs) \in$ $(\rho \sigma , \sigma \rho)$, $\rs \in \obt$, implementing the flips
\[
( t \otimes t' ) \circ \eps ( \rho' , \rho ) 
=
\eps ( \sigma' , \sigma ) \circ ( t' \otimes t )
\ .
\]
In particular, if $( \wa \rho , \otimes , \iota )$ is symmetric, then we define the {\em symmetry} operator
\[
\eps := \eps ( \rr ) \in ( \rho^2 , \rho^2 )
\ .
\]
It is well-known that $\eps$ induces a unitary representation of $\bP_\infty$, by considering products of the type
$\eps \circ ( 1_\rho \times \eps ) \circ ( 1_{\rho^r} \otimes \eps ) \circ \ldots$,
$r \in \bN$
(for example, see \cite[p.100]{DR87}). We denote the unitaries arising from such a representation by $\eps (p) \in (\rho^r, \rho^r)$, $r \in \bN$, $p \in$ $\bP_r \subseteq \bP_\infty$; in particular, we denote the unitary associated with $(r,s) \in \bP_{r+s}$ by
$\eps_\rho (r,s) \in (\rho^{r+s},\rho^{r+s})$.
If there is  
\[
\alpha : 
( \wa \rho    , \otimes  , \iota  , \eps )
\to 
( \wa{\rho'} , \otimes' , \iota' , \eps')
\ ,
\]
then the above considerations imply that
$\alpha (\eps_\rho(r,s)) = \eps'_{\rho'}(r,s)$,
$r,s \in \bN$.
We denote the pointed \sC dynamical system associated with $( \wa \rho    , \otimes  , \iota  , \eps )$ by $( \oro , \rho_* , \eps )$. According to the considerations of the previous section, we find that $\wa \rho$ is a $C(X^\iota)$-category. Now let $( \wa \rho_\bullet , \otimes_\bullet , \iota_\bullet , \eps_\bullet )$ be a symmetric tensor \sC category such that $( \iota_\bullet , \iota_\bullet ) \simeq \bC$; we denote the set of isomorphism classes of locally trivial symmetric tensor \sC categories $( \wa \rho , \otimes , \iota , \eps )$ having fibre $\wa \rho_\bullet$ and such that $(\ii) \simeq C(X)$ by
\[
\sym ( X , \wa \rho_\bullet )
\ .
\]
With the term {\em isomorphism}, here we mean a tensor \sC isofunctor of the type $\alpha : ( \wa \rho    , \otimes  , \iota  , \eps ) \to$ $( \wa{\rho'} , \otimes' , \iota' , \eps')$. 
A locally trivial symmetric tensor \sC category $( \wa \rho , \otimes , \iota , \eps )$ with fibre $\wa \rho_\bullet$ is called {\em $\wa \rho_\bullet$-bundle}; the class of $\wa \rho$ in $\sym ( X , \wa \rho_\bullet )$ is denoted by $[ \wa \rho , \otimes , \iota , \eps ]$ or, more concisely, by $[\wa \rho]$.

\begin{rem}
The condition $\alpha (\eps) = \eps'$ required in the previous notion of isomorphism comes from group duality. Let $G_1 , G_2$ be compact groups and $RG_1 , RG_2$ the associated symmetric tensor \sC categories of finite dimensional, continuous, unitary representations; if $\alpha : RG_1 \to  RG_2$ is an isomorphism of tensor categories, then a sufficient condition to get an isomorphism $\alpha^* : G_2 \to G_1$ is that $\alpha$ preserves the symmetry (see \cite{IK02}).
\end{rem}

The category $\bf hilb$ of Hilbert spaces, endowed with the usual tensor product, is clearly a symmetric tensor \sC category.
Of particular interest for the present work is the following class of subcategories of ${\bf hilb}$. Let $\bH$ be the standard Hilbert space of dimension $d \in \bN$; we denote the $r$-fold tensor power of $\bH$ by $\bH^r$ (for $r=0$, we define $\iota :=$ $\bH^0 :=$ $\bC$) and the space of linear operators from $\bH^r$ to $\bH^s$ by $(\hrs)$, $r,s \in \bN$; moreover, we consider the flip 
\[
\theta \in ( \bH^2 , \bH^2 )
\ .
\]
If $G \subseteq \ud$ is a compact group, then for every $g \in G$ we find that the $r$-fold tensor power $g_r$ is a unitary on $\bH^r$, so that we consider the spaces of $G$-invariant operators
\begin{equation}
\label{eq_ga_rs}
(\hrs)_G :=
\left\{ 
t \in ( \hrs )  :  t  =  \wa g (t)  :=  g_s \circ t \circ g_r^*  
\ , \ 
g \in G
\right\} \ .
\end{equation}
In particular, we have that $\theta \in (\bH^2,\bH^2)_G$. By defining the category $\wa G$ with objects $\bH^r$, $r \in \bN$, and arrows $(\hrs)_G$, we obtain a symmetric tensor \sC category $( \wa G , \otimes , \iota , \theta )$.  The pointed \sC dynamical system associated with $( \wa G , \otimes , \iota , \theta )$ in the sense of Thm.\ref{thm_oro} is $( \mO_G , \sigma_G , \theta )$, where $\mO_G$, $\sigma_G$ are defined in \S \ref{intro}. As mentioned in \S \ref{intro}, the category $\wa G$ is amenably generated, so that we have equalities
\begin{equation}
\label{eq_g_amen}
(\hrs)_G = ( \sigma_G^r , \sigma_G^s )
\ \ , \ \
r,s \in \bN
\ .
\end{equation}
If $G$ reduces to the trivial group, then we obtain the category $( \wa \bH , \otimes , \iota , \theta )$ of tensor powers of $\bH$ and Thm.\ref{thm_oro} yields the Cuntz \sC dynamical system $( \mO_d , \sigma_d , \theta )$. If $G = \ud$, then $(\hrs)_\ud$ is nontrivial only for $r=s$; in such a case, $(\bH^r,\bH^r)_\ud$ is generated as a vector space by the unitaries $\theta(p)$, $p \in \bP_r$.
Let $NG$ denote the normalizer of $G$ in $\ud$ and $QG := NG / G$ the quotient group; then, the map (\ref{def_cag}) induces an injective continuous action
\begin{equation}
\label{eq_qg}
QG \to {\bf aut}(\mO_G , \sigma_G , \theta ) 
\ \ , \ \
y \mapsto \wa y
\ .
\end{equation}
An other symmetric tensor \sC category that shall play an important role in the present paper is the category ${\bf vect} (X)$ with objects vector bundles over a compact Hausdorff space $X$ and arrows vector bundle morphisms.
\begin{defn}
\label{def_ef}
Let $( \mT , \otimes  , \iota , \eps )$ be a symmetric tensor \sC category. An {\bf embedding functor} is a \sC monofunctor $E : \mT \to {\bf vect} (X^\iota)$ preserving tensor product and symmetry.
\end{defn}

We now describe in geometrical terms a set of embedding functors of $\wa G$, $G \subseteq \ud$. To this end, let us denote the set of monomorphisms
\[
\eta : ( \mO_G , \sigma_G , \theta ) \to ( \mO_d , \sigma_d , \theta )
\]
by ${\bf emb}\mO_G$, and endow it with the pointwise norm topology; by Thm.\ref{thm_oro}, we can identify ${\bf emb}\mO_G$ with the set of embeddings 
$\beta : ( \wa G , \otimes , \iota , \theta ) \to ( \wa \bH , \otimes , \iota , \theta )$. 
In particular, we denote the group of autofunctors of the type
$\beta : ( \wa G , \otimes , \iota , \theta ) \to ( \wa G , \otimes , \iota , \theta )$
by ${\bf aut} \wa G$.

\begin{defn}
The faithful representation $G \subseteq \ud$ is said to be {\bf covariant} whenever for each $\eta \in {\bf emb}\mO_G$ there is $u \in \ud$ such that $\eta = \wa u |_{\mO_G}$.
\end{defn}

By Thm.\ref{thm_oro} the property of $G \subseteq \ud$ being covariant is equivalent to require that the inclusion functor $(\wa G,\otimes,\iota,\theta) \subseteq (\wa \bH,\otimes,\iota,\theta)$ is unique up to tensor unitary natural transformation. 
By \cite[Lemma 6.7,Thm.4.17]{DR89} (see also the following Thm.\ref{thm_amen}) every inclusion $G \subseteq \sud$ is covariant, thus we conclude that every compact Lie group has a faithful covariant representaton (in fact, it is well-known that every compact Lie group $G$ has a faithful representation $u : G \to \ud$, so it suffices to consider $u \oplus \ovl{\det u}$). Anyway there are interesting examples of covariant representations whose image is not contained in the special unitary group.

\begin{ex}
\label{ex_T0}
Let $G \subset \ud$ denote the image of $\bT$ under the action on $\bH \simeq \bC^d$ defined by scalar multiplication. Then $\wa G$ has spaces of arrows $(\hrs)_G = \delta_{rs} (\hrs)$, $r,s \in \bN$, where $\delta_{rs}$ denotes the Kronecker symbol. We have $NG = \ud$, $QG = \bPU(d)$. If $\eta \in {\bf emb}\mO_G$ then $\eta$ restricts to a \sC isomorphism $\eta : (\bH,\bH)_G = (\bH,\bH) \to (\bH,\bH)$, which is the inner automorphism induced by a unitary $u \in \ud$. Since $(\bH^r,\bH^r) \simeq \otimes^r(\bH,\bH)$ and $\eta(\otimes_i^r t_i) = \otimes_i^r \eta(t_i) = \otimes_i^r \wa u(t_i)$, $t_i \in (\bH,\bH)$, $i = 1 , \ldots , r$, we conclude that $\eta = \wa u|_{\mO_G}$ and $G \subseteq \ud$ is covariant.
\end{ex}

\begin{lem}
\label{cor_dual_od}
Let $G \subseteq \ud$ be covariant. Then ${\bf emb}\mO_G$ is homeomorphic to the coset space $\ud \backslash G$. For each locally compact Hausdorff space $Y$ and continuous map 
\begin{equation}
\label{eq_cbeta}
\beta : Y \to {\bf emb}\mO_G
\ \ , \ \
y \mapsto \beta_y
\ ,
\end{equation}
there is a finite open cover $\{ Y_l \}$ of $Y$ and continuous maps $u_l : Y_l \to \ud$ such that 
\begin{equation}
\label{eq_dual_od}
\wa u_{l,y} (t) = \beta_y (t)
\ \ , \ \
y \in Y_l
\ , \
t \in \mO_G
\ ,
\end{equation}
where $\wa u_{l,y} \in {\bf aut} \mO_d$ is defined by (\ref{def_cag}).
\end{lem}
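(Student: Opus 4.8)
The plan is to first establish the homeomorphism ${\bf emb}\mO_G \cong \ud \backslash G$ and then use it to produce the local sections $u_l$. For the homeomorphism: by covariance, every $\eta \in {\bf emb}\mO_G$ is of the form $\wa u|_{\mO_G}$ for some $u \in \ud$, so the map $\ud \to {\bf emb}\mO_G$, $u \mapsto \wa u|_{\mO_G}$, is surjective and continuous (continuity of $u \mapsto \wa u$ in the pointwise norm topology follows from (\ref{def_cag}), since each generator $\psi_i$ is mapped to a norm-continuous function of the matrix entries, and this propagates to all of $\mO_d$ hence $\mO_G$). I would then identify the fibres: $\wa u|_{\mO_G} = \wa v|_{\mO_G}$ iff $\wa{v^*u}$ fixes $\mO_G$ pointwise iff $v^*u$ normalizes $G$ in the relevant sense — more precisely, $\wa w$ fixes $\mO_G$ exactly when $w$ fixes each intertwiner space $(\hrs)_G$, and by the reconstruction of $G$ from $\mO_G$ (point (1) in \S\ref{intro}, i.e. $G$ is the stabilizer of $\mO_G$ in ${\bf aut}\mO_d$) this happens iff $w \in G$. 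Thus the fibres are exactly the left cosets $uG$, giving a continuous bijection $\ud \backslash G \to {\bf emb}\mO_G$. Since $\ud$ is compact, $\ud \backslash G$ is compact and ${\bf emb}\mO_G$ is Hausdorff (it sits inside a space of maps with the pointwise norm topology), so this continuous bijection is automatically a homeomorphism.

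For the second assertion, I would compose $\beta : Y \to {\bf emb}\mO_G$ with the homeomorphism to get a continuous map $\bar\beta : Y \to \ud \backslash G$. The quotient map $q : \ud \to \ud \backslash G$ is a principal $G$-bundle (as $G \subseteq \ud$ is a closed subgroup of a compact Lie group), hence in particular has local sections: for each point of $\ud \backslash G$ there is a neighbourhood $V$ and a continuous $s_V : V \to \ud$ with $q \circ s_V = id_V$. Pulling these back along $\bar\beta$ gives an open cover $\{\bar\beta^{-1}(V)\}$ of $Y$ with continuous lifts; by paracompactness (and local compactness) of $Y$ one may pass to a locally finite, indeed — if $Y$ is compact, or after the usual shrinking argument — finite subcover $\{Y_l\}$ with continuous $u_l := s_{V_l} \circ \bar\beta : Y_l \to \ud$. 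By construction $q(u_{l,y}) = \bar\beta(y)$, which unwinding the homeomorphism says precisely that $u_{l,y}G$ is the coset corresponding to $\beta_y$, i.e. $\wa u_{l,y}|_{\mO_G} = \beta_y$, which is (\ref{eq_dual_od}).

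The main obstacle is the identification of the fibres of $u \mapsto \wa u|_{\mO_G}$, i.e. showing that $\wa w$ restricts to the identity on $\mO_G$ if and only if $w \in G$. The ``if'' direction is the definition of $\mO_G$ as the fixed-point algebra. The ``only if'' direction is the nontrivial input: it is exactly the statement that $G$ can be recovered from $\mO_G$ as its stabilizer in ${\bf aut}\mO_d$, which is recalled in \S\ref{intro} and rests on the Doplicher–Roberts analysis; concretely one uses that $w$ must fix $(\bH,\bH)_G$ and, more importantly, the higher intertwiner spaces $(\bH^r,\bH^r)_G$, and then invokes that $G$ is characterized (via Tannaka–Krein type duality, or directly via \cite{DR87,DR89}) as the group of unitaries on $\bH$ fixing all these spaces. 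Once this is in hand, the rest is soft topology: continuity, compactness, and the local-section property of principal bundles over a paracompact base.

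(Here we have used implicitly that $Y$ carries good covers, so in particular is paracompact; if $Y$ is only locally compact Hausdorff one takes the cover to be locally finite rather than finite, but the statement as given assumes a finite cover, consistent with the applications in \S\ref{sec_gd} where $Y$ is compact.)
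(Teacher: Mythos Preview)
Your proof is correct and follows essentially the same route as the paper: define $\chi(u) := \wa u|_{\mO_G}$, use covariance for surjectivity, invoke the stabilizer characterization of $G$ (the paper cites \cite[Cor.~3.3]{DR87}) to identify the fibres with $G$-cosets, and then pull back local sections of the principal $G$-bundle $q : \ud \to \ud \backslash G$. One clarification: the finiteness of the cover $\{Y_l\}$ comes not from any property of $Y$ but from compactness of $\ud \backslash G$ --- one first chooses a \emph{finite} trivializing cover $\{\Omega_l\}$ of the compact homogeneous space and then sets $Y_l := \beta^{-1}(\Omega_l)$, so your parenthetical worry about paracompactness versus compactness of $Y$ is unnecessary.
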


\begin{proof}
We consider the fibration $q : \ud \to \ud \backslash G$ and define
\[
\chi : \ud \to {\bf emb}\mO_G
\ \ , \ \
u \mapsto \wa u |_{\mO_G}
\ .
\]
The map $\chi$ is clearly continuous and, since $G \subseteq \ud$ is covariant, it is also surjective. 
Now, (\ref{def_cag}) yields an isomorphism from $G$ to the stabilizer of $\mO_G$ in ${\bf aut}\mO_d$ (\cite[Cor.3.3]{DR87}), thus we find that $\chi (u_1) = \chi(u_2)$ if and only if $u_1^* u_2 \in G$, i.e. $q(u_1) = q(u_2)$. This proves that ${\bf emb}\mO_G$ is homeomorphic to $\ud \backslash G$.
Since $\ud$ is a compact Lie group, the map $q$ defines a principal $G$-bundle over $\ud \backslash G$, thus there is a finite open cover $\{ \Omega_l \}$ of $\ud \backslash G$ and local sections $s_l : \Omega_l \to \ud$, $q \circ s_l = id_{\Omega_l}$. Now, let us identify ${\bf emb}\mO_G$ with $\ud \backslash G$ and consider the map (\ref{eq_cbeta}); defining $Y_l := \beta^{-1}(\Omega_l)$ we obtain a finite open cover of $Y$ and set $u_{l,y} := s_l \circ \beta_y$, $y \in Y_l$. By definition of $\chi$, the equation (\ref{eq_dual_od}) is fulfilled and the theorem is proved.
\end{proof}

\noindent Let now $p : NG \to QG$ the natural projection. The following result is a version of \cite[Thm.36]{Vas06} for groups not necessarily contained in $\sud$:
\begin{thm}
\label{thm_amen2}
If $G \subseteq \ud$ is covariant then there is an isomorphism $QG \simeq {\bf aut} \wa G$, and for each compact Hausdorff space $X$ there is a bijective map $Q : \sym (X,\wa G) \to H^1(X,QG)$.
\end{thm}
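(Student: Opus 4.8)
The plan is to establish the two claims separately, beginning with the identification $QG \simeq {\bf aut}\wa G$, which is the local model and which will drive the cohomological statement. First I would recall that, by Thm.\ref{thm_oro}, an autofunctor $\beta : (\wa G,\otimes,\iota,\theta) \to (\wa G,\otimes,\iota,\theta)$ is the same thing as an automorphism $\eta \in {\bf aut}(\mO_G,\sigma_G,\theta)$; in particular ${\bf aut}\wa G$ is a subgroup of ${\bf emb}\mO_G$ (taking $Y$ a point in \lemref{cor_dual_od}), namely the subgroup of those $\eta$ that are surjective. By the covariance hypothesis, every such $\eta$ equals $\wa u|_{\mO_G}$ for some $u \in \ud$, and by \cite[Cor.3.3]{DR87} the stabilizer of $\mO_G$ in ${\bf aut}\mO_d$ is exactly the image of $G$ under (\ref{def_cag}). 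Thus $\wa u|_{\mO_G} = \wa v|_{\mO_G}$ iff $v^{-1}u \in G$, and $\wa u|_{\mO_G}$ maps $\mO_G$ onto itself iff $\wa u$ normalizes the stabilizer, i.e. iff $u \in NG$. Hence the assignment $u \mapsto \wa u|_{\mO_G}$ descends to a continuous bijective homomorphism $QG \to {\bf aut}\wa G$; since $NG$ is a compact Lie group and $p : NG \to QG$ is a quotient of such, a standard compactness argument (or \lemref{cor_dual_od} applied with $Y = QG$) gives continuity of the inverse, so this is an isomorphism of topological groups, and it visibly intertwines with the action (\ref{eq_qg}).

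For the second claim, the plan is the usual cocycle dictionary for locally trivial bundles. Given $[\wa\rho] \in \sym(X,\wa G)$, choose a good cover $\{X_i\}$ and local charts $\alpha_i : \wa\rho_{X_i} \to X_i\wa G$ as in the definition of local triviality; the transition isofunctors $\alpha_{ij} := \alpha_i \circ \alpha_j^{-1}$ on $X_{ij}$ are $C(X_{ij})$-autofunctors of $X_{ij}\wa G$ preserving tensor product and symmetry, hence by the fibrewise version of the first claim are given by continuous maps $g_{ij} : X_{ij} \to {\bf aut}\wa G \simeq QG$ (here one uses that $X_{ij}$ is contractible, so the bundle is globally trivialised over it, and that a $C(X_{ij})$-autofunctor of a constant bundle is pointwise an autofunctor of the fibre, depending continuously on the point — precisely \lemref{cor_dual_od} with $Y = X_{ij}$, lifting the autofunctors to $\ud$ and reading off the $QG$-valued data). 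These satisfy the cocycle identity $g_{ij}g_{jk} = g_{ik}$ on $X_{ijk}$, giving a class $Q([\wa\rho]) \in H^1(X,QG)$; a change of charts $\alpha_i' = h_i \alpha_i$, $h_i : X_i \to QG$, replaces $g_{ij}$ by $h_i g_{ij} h_j^{-1}$, so the class is well-defined. Surjectivity is the reverse construction: a $QG$-cocycle $\{g_{ij}\}$ lets one glue the constant bundles $X_i\wa G$ along $g_{ij}$ acting through the isomorphism ${\bf aut}\wa G \simeq QG$, producing a locally trivial symmetric tensor \sC category with fibre $\wa G$; injectivity follows because an isomorphism between two such glued categories restricts on a common refinement to charts whose transition data differ by a coboundary.

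The main obstacle I expect is not the abstract cocycle bookkeeping but the continuity bridge between "bundle of categories" data and "$QG$-valued Čech data": one must check that a $C(X_{ij})$-isofunctor of the constant bundle $X_{ij}\wa G$ really is, pointwise, an element of ${\bf aut}\wa G$ varying continuously in the pointwise-norm topology, and then that the resulting map into $QG$ (obtained after the non-canonical lift to $\ud$ supplied by \lemref{cor_dual_od}) is continuous and independent of the lift. This is exactly where covariance is essential — without it the fibre automorphism group could be strictly larger than $QG$ and need not even be a group one can coordinatise this way — and it is the reason the hypothesis "$G \subseteq \ud$ covariant" appears in the statement. Once this continuity is in hand, everything else is the standard equivalence between $H^1(X,-)$ and isomorphism classes of locally trivial objects with the given structure group, and the naturality in $X$ (functoriality under pullback along continuous maps) is automatic from the construction.
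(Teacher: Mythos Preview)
Your proposal is correct and follows essentially the same approach as the paper: identify ${\bf aut}\wa G$ with ${\bf aut}(\mO_G,\sigma_G,\theta)$ via Thm.\ref{thm_oro}, use covariance to write any such automorphism as $\wa u|_{\mO_G}$ for some $u\in\ud$, and then invoke \cite[Cor.3.3]{DR87} (stabilizer of $\mO_G$ equals $G$) to conclude $u\in NG$, which gives surjectivity of (\ref{eq_qg}). For the bijection $Q$, the paper simply cites \cite[Lemma 32]{Vas06}, whose content is precisely the standard cocycle dictionary you spell out explicitly in your second paragraph; your additional care about the continuity of the inverse $QG\to{\bf aut}\wa G$ is handled automatically since $QG$ is compact and the target is Hausdorff.
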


\begin{proof}
Using Thm.\ref{thm_oro} we identify ${\bf aut} \wa G$ with ${\bf aut}(\mO_G,\sigma_G,\theta)$. By \cite[Lemma 32]{Vas06}, to prove the theorem it suffices to verify that (\ref{eq_qg}) is an isomorphism. 
Now, the same argument of the previous Lemma shows that if $\eta \in {\bf aut} \wa G$ then there is $u \in \ud$ such that $\wa u \in {\bf aut} \mO_d$ restricts to $\eta$ on $\mO_G$; since $\mO_G$ is $\wa u$-stable, for each $g \in G$ we find that $\wa u \circ \wa g \circ {\wa u}^{-1}$ is the identity on $\mO_G$, thus by \cite[Cor.3.3]{DR87} there is $g' \in G$ such that $g' = ugu^*$. We conclude that $u \in NG$ and since $\wa u \circ \wa g |_{\mO_G} = \wa u |_{\mO_G} = \eta$ for all $g \in G$ we find $\eta = \wa y$, where $y = p(u)$ and $\wa y$ is the image of $y \in QG$ under (\ref{eq_qg}).
\end{proof}

\begin{rem}
\label{rem_amen2}
Let ${\bf aut}(\mO_d;\mO_G)$ denote the group of automorphisms of 
$(\mO_d,\sigma,\theta)$
that restrict to elements of ${\bf aut}\mO_G$. The argument of the previous theorem shows that (\ref{def_cag}) induces the isomorphism $NG \to {\bf aut}(\mO_d;\mO_G)$, in such a way that for each $u \in NG$ we have
\[
\wa u |_{\mO_G} = \wa y 
\ \ , \ \ 
y := p(u) \in QG
\ .
\]
This yields a slight generalization of \cite[Thm.34]{Vas06}.
\end{rem}

Given a $\wa G$-bundle $( \wa \rho , \otimes , \iota , \eps )$, with $G \subseteq \ud$ covariant, we denote the associated class in $H^1(X,QG)$ by $Q[\wa \rho]$.

Now, let us consider a symmetric tensor \sC category $( \wa \rho , \otimes , \iota , \eps  )$. For every $n \in \bN$, we define the {\em antisymmetric projection}
\begin{equation}
\label{def_ap}
P_{\rho,\eps,n}
:= 
\frac{1}{n!} \sum_{p \in \bP(n)} {\mathrm{sign}}(p) \ \eps_\rho(p)
\ .
\end{equation}
The object $\rho$ is said to be {\em special} if there is $d \in \bN$ and a partial isometry $S \in (\iota,\rho^d)$ with support $P_{\rho,\eps,d}$, such that 
\begin{equation}
\label{eq_spce}
(S^* \otimes 1_\rho) \circ ( 1_\rho \otimes S ) = (-1)^{d-1} d^{-1} 1_\rho
\ \ \Leftrightarrow \ \ 
S^* \rho_*(S) = (-1)^{d-1} d^{-1} 1
\ .
\end{equation}
In such a case, $d$ is called the {\em dimension} of $\rho$. When $\rho$ is an endomorphism and is special in the above sense, we say that $\rho$ satisfies the {\em special conjugate property} (see \cite[\S 4]{DR89A}).

Special objects play a pivotal role in the Doplicher-Roberts theory. From the viewpoint of group duality they are an abstract characterization of the notion of representation with determinant $1$ (see \cite[\S 3]{DR89}). From the \sC algebraic point of view, they are an essential tool for the crossed product defined in \cite[\S 4]{DR89A}.

Let $G \subseteq \sud$. Then the object $\bH$ of $\wa G$ is special and has dimension $d$. In fact, we consider the isometry $S$ generating the totally antisymmetric tensor power $\wedge^d \bH$, and note that $u_d \circ S = \det u \cdot S = S$, $u \in \sud$, so that $S \in (\iota,\bH^d)_\sud \subseteq$ $(\iota,\bH^d)_G$ and (\ref{eq_spce}) follows from \cite[Lemma 2.2]{DR87}.
In particular when $G = \sud$ the spaces $(\hrs)_\sud$, $r,s \in \bN$, are generated by the operators $\theta(p)$, $p \in \bP_\infty$, and $S \in (\iota,\bH^d)_\sud$, by closing w.r.t. composition and tensor product.

\begin{defn}
\label{def_spec_cat}
A {\bf special category} is a locally trivial, symmetric tensor \sC category $( \wa \rho , \otimes , \iota , \eps )$ with fibre $( \wa \rho_\bullet , \otimes_\bullet , \iota_\bullet , \eps_\bullet )$, such that $\rho_\bullet$ is a special object.
\end{defn}
The dimension of the object $\rho$ generating the special category $\wa \rho$ is by definition the dimension of the special object $\rho_\bullet$ and is denoted by $d$. The main motivation of the present work is the search of embedding functors for special categories. The first step in this direction is given by the following classification result, proved in \cite[Thm.36]{Vas06}:
\begin{thm}
\label{thm_amen}
Let $( \wa \rho  , \otimes , \iota , \eps )$ be a special category with fibre $( \wa \rho_\bullet , \otimes_\bullet , \iota_\bullet , \eps_\bullet )$. Then: (1) $\rho$ is amenable; (2) Let $d \in \bN$ denote the dimension of $\rho$; then there is a compact Lie group $G \subseteq \sud$ such that $( \wa \rho_\bullet , \otimes_\bullet , \iota_\bullet , \eps_\bullet ) \simeq$ $( \wa G , \otimes , \iota , \theta )$; (3) There is a bijection $\sym ( X^\iota , \wa \rho_\bullet ) \simeq H^1(X^\iota , QG)$; (4) $\oro$ is an $\mO_G$-bundle.
\end{thm}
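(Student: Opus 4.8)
The plan is to reduce everything to the already-cited result \cite[Thm.36]{Vas06} (stated here as Theorem~\ref{thm_amen}), whose conclusions are \emph{almost} what is wanted but are phrased for the abstract fibre $\wa \rho_\bullet$; the only genuinely new content in Theorem~\ref{thm_amen} compared to the cited statement is that we want to \emph{identify} this abstract fibre with a concrete $\wa G$, $G\subseteq\sud$, and then replace $\sym(X^\iota,\wa\rho_\bullet)$ by $\sym(X^\iota,\wa G)$ and $\oro$ by an $\mO_G$-bundle in a way compatible with Thm.~\ref{thm_amen2}. So the proof is really a chain of identifications.

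\emph{Step 1 (amenability).} Since $\wa\rho$ is locally trivial with fibre a special category, over each chart $\wa\rho_U\simeq U\wa\rho_\bullet$. The fibre $\rho_\bullet$ being special means, by the classical Doplicher--Roberts analysis (the argument behind \cite[Lemma 6.7, Thm.4.17]{DR89}), that $\wa\rho_\bullet$ is amenably generated: the intertwiner spaces of $\rho_\bullet$ already exhaust the intertwiner spaces of the associated endomorphism $(\rho_\bullet)_*$. Amenability is a fibrewise condition (it can be checked on the norm-dense subalgebra $^0\oro$, where the relevant equalities $(\rho^r,\rho^s)=(\rho_*^r,\rho_*^s)$ localize), so local triviality propagates it to $\rho$. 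This gives (1), and with it the applicability of Thm.~\ref{thm_oro}, so that $\oro$ is an honest \sC algebra and $\wa\rho\mapsto(\oro,\rho_*,\eps)$ is a faithful translation.

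\emph{Step 2 (identifying the fibre).} Here I would invoke the Doplicher--Roberts structure theorem for symmetric tensor \sC categories with a special object and simple unit: such a category is the representation category of a \emph{uniquely determined} compact group, and specialty of dimension $d$ forces that group into $\sud$. Concretely, $\rho_\bullet$ special of dimension $d$ gives an embedding $\wa\rho_\bullet\hra\wa\bH$ (an embedding functor into $\mathbf{hilb}$, by \cite[Thm.4.17]{DR89}) carrying $\eps_\bullet\mapsto\theta$ and the distinguished antisymmetric isometry $S$ to the generator of $\wedge^d\bH$; the stabilizer $G\subseteq\ud$ of the image is then a compact Lie group with $S\in(\iota,\bH^d)_G$, forcing $\det=1$, i.e.\ $G\subseteq\sud$, and $(\wa\rho_\bullet,\otimes_\bullet,\iota_\bullet,\eps_\bullet)\simeq(\wa G,\otimes,\iota,\theta)$. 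This is (2). Note every $G\subseteq\sud$ is covariant (remarked in the excerpt, via \cite[Lemma 6.7]{DR89}), so Thm.~\ref{thm_amen2} applies to this $G$.

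\emph{Step 3 (the bijection and the bundle).} Transporting the isomorphism of (2) fibrewise, $\sym(X^\iota,\wa\rho_\bullet)=\sym(X^\iota,\wa G)$ as sets of isomorphism classes of bundles of \sC categories; then Thm.~\ref{thm_amen2} supplies the bijection $Q:\sym(X^\iota,\wa G)\to H^1(X^\iota,QG)$, giving (3). Finally, for (4): given a $\wa G$-bundle $\wa\rho$, local triviality with fibre $\wa G$ plus Thm.~\ref{thm_oro} (last sentence: local triviality of $\wa\rho$ as a bundle of categories implies local triviality of $\oro$ as a \sC algebra bundle) shows $\oro$ is locally trivial; by Step 2 the typical fibre of $\oro$ is $\mO_{G_\bullet}=\mO_G$ (the DR-algebra of $\wa G$, identified in \S\ref{sec_sp_cu} with the fixed-point algebra $\mO_G\subseteq\mO_d$), so $\oro$ is an $\mO_G$-bundle. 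One must check the gluing cocycle for $\oro$ takes values in the automorphisms fixing $\sigma_G$ and $\theta$, i.e.\ in ${\bf aut}(\mO_G,\sigma_G,\theta)\simeq QG$ by Thm.~\ref{thm_amen2}; this is exactly the cocycle $Q[\wa\rho]$, so the two structures match.

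The main obstacle is Step 2: everything else is bookkeeping and localization, but pinning down the fibre group $G\subseteq\sud$ from the abstract special category requires the full strength of the Doplicher--Roberts reconstruction (the embedding theorem \cite[Thm.4.17]{DR89} together with the determination of the gauge group and the special conjugate property \cite[\S4]{DR89A}), and one must be careful that the group obtained is independent of the chosen local chart — that is, that all fibres $\wa\rho_{\bullet,x}$ carry \emph{the same} $G$, which follows from local triviality but deserves an explicit word. Since the excerpt flags this as ``proved in \cite[Thm.36]{Vas06}'', in practice the proof here will be a short paragraph citing that theorem for (1)--(3) and deducing (4) from Thm.~\ref{thm_oro} and Thm.~\ref{thm_amen2} as above.
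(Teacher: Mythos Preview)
The paper does not give any proof of this theorem at all: it is introduced with the sentence ``proved in \cite[Thm.36]{Vas06}'' and then simply stated, with no argument following. Your proposal correctly notices this, and the sketch you give --- reducing amenability to the fibrewise Doplicher--Roberts analysis, identifying the fibre group $G\subseteq\sud$ via the embedding theorem \cite[Thm.4.17]{DR89}, and reading off (3) and (4) from Thm.~\ref{thm_amen2} and the last clause of Thm.~\ref{thm_oro} --- is a sound outline of how the cited proof actually runs; it is considerably more than the paper itself supplies. One minor wording issue: you describe the plan as ``reducing to \cite[Thm.36]{Vas06}'' and then look for ``new content in Theorem~\ref{thm_amen} compared to the cited statement'', but in the paper Theorem~\ref{thm_amen} \emph{is} the cited statement verbatim, so there is nothing to reduce and no residual content to supply.
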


In general, the object generating a special category is not special. The obstruction to $\rho$ being special is encoded by the {\em Chern class} introduced in \cite[\S 3.0.3]{Vas06},
\begin{equation}
\label{def_cc_tso}
c (\rho) \in H^2 ( X^\iota , \bZ )
\ \ ,
\end{equation}
constructed by observing that the $(\ii)$-module $\mR_\rho :=$ $\{ \psi \in (\iota,\rho^d) : P_{\rho,\eps,d}\psi = \psi \}$ is the set of sections of a line bundle $\mL_\rho \to X^\iota$. The invariant $c(\rho)$ is defined as the first Chern class of $\mL_\rho$.

\section{Cohomology classes and principal bundles.}
\label{sec_pb}

In the present section we give an exact sequence and a cohomological invariant for a class of principal bundles. This elementary construction has important consequences in the setting of abstract duality for tensor \sC categories and can be regarded as a generalization of the Dixmier-Douady invariant.

Let $G$ a topological group with unit $1$ and $X$ a locally compact, paracompact Hausdorff space endowed with a (good) open cover $\{ X_i \}$. 
A $G${\em -cocycle} is given by a family $\efg := \{ g_{ij} \}$ of continuous maps $g_{ij} : X_{ij} \to G$ satisfying on $X_{ijk}$ the {\em cocycle relations} 
\[
g_{ij} g_{jk} = g_{ik}
\]
(which imply $g_{ij}g_{ji} = g_{ii} = 1$). In the sequel we will denote the evaluation of $g_{ij}$ on $x \in X_{ij}$ by $g_{ij,x}$. We say that $\efg$ is {\em cohomologous} to $\efg' := \{ g'_{ij} \}$ whenever there are maps $v_i : X_i \to G$ such that $g_{ij} v_j = v_i g_{ij}$ on $X_i$. This defines an equivalence relation over the set of $G$-cocycles, and passing to the inverse limit over open good covers provides the \v Cech {\em cohomology set} $H^1(X,G)$ (see \cite[I.3.5]{Kar}), which is a pointed set with distinguished element the class of the {\em trivial cocycle} ${\bf 1}$, $1_{ij,x} \equiv 1$. To be concise, sometimes in the sequel cocycles will be denoted simply by $\efg$ or $\{ g_{ij} \}$, and their classes in $H^1(X,G)$ by $[\efg]$ or $[ g_{ij} ]$. 
It is well-known that $H^1(X,G)$ classifies the principal $G$-bundles over $X$. When $G$ is Abelian, $H^1(X,G)$ coincides with the first cohomology {\em group} with coefficients in the sheaf $\mS_X (G)$ of germs of continuous maps from $X$ into $G$ (\cite[I.3.1]{Hir}).

We now pass to give a definition of {\em nonabelian \v Cech $2$-cohomology}. The basic object providing the coefficients of the theory is now given by a {\em crossed module} (also called {\em $2$--group}, see \cite[\S 3]{BS08}), which is defined by a morphism $i : G \to N$ of topological groups and an action $\alpha : N \to {\bf aut}G$, such that $i$ is equivariant for $\alpha$ and the adjoint actions $G \to {\bf aut}G$, $G \ni g \mapsto \wa g$, $N \to {\bf aut}N$, $N \ni u \mapsto \wa u$:
\[
\wa u \circ i = i \circ \alpha (u)
\ \ , \ \
\wa g = \alpha \circ i(g)
\ .
\]
The crossed module $(G,N,i,\alpha)$ is denoted for short by $G \to N$. To be concise we write $\unl g := i(g) \in N$, $g \in G$, and $\alpha (u) := \wa u$, $u \in N$. The equivariance relations ensure that no confusion will arise from this notation.

\begin{ex}
\label{ex_GNG}
Let $N$ be a topological group and $G$ a normal subgroup of $N$: then considering the inclusion $i : G \to N$ and the adjoint action $\alpha : N \to {\bf aut}G$, $u \mapsto \wa u$, yields a crossed module $G \to N$.
\end{ex}

A {\em cocycle pair} $\efb := (\efu,\efg)$ with coefficients in the crossed module $G \to N$ is given by families of maps 
\[
u_{ij} : X_{ij} \to N
\ \ , \ \
g_{ijk} : X_{ijk} \to G
\ ,
\]
satisfying the cocycle relations
\[
\left\{
\begin{array}{ll}
u_{ij} u_{jk} = {\unl g}_{ijk} u_{ik}
\\
g_{ijk} g_{ikl} = \wa u_{ij} (g_{jkl}) g_{ijl} \ ,
\end{array}
\right.
\]
where $\wa u_{ij} : X_{ij} \to {\bf aut}G$ is defined by means of $\alpha$. Cocycle pairs $\efb := (\efu,\efg)$, $\efb' := (\efu',\efg')$ are said to be {\em cohomologous} whenever there is a pair $(v,h)$ of families of maps
\[
v_i : X_i \to N
\ \ , \ \
h_{ij} : X_{ij} \to G
\ ,
\]
such that 
\[
\left\{
\begin{array}{ll}
v_i u'_{ij} = {\unl h}_{ij} u_{ij} v_j
\\
h_{ik} g_{ijk} = \wa v_i (g'_{ijk}) h_{ij} \wa u_{ij}(h_{jk})
\ .
\end{array}
\right.
\]
It can be proved that cohomology of cocycle pairs defines an equivalence relation (\cite[\S 4]{BS08}). The set of cohomology classes of cocycle pairs is by definition the {\em cohomology set} relative to the cover $\{ X_i \}$ {\em with coefficients in the crossed module} $G \to N$; passing to the limit w.r.t. covers yields the {\em \v Cech cohomology set} $\breve H^2 (X,\nN)$ with distinguished element the class of the trivial cocycle pair ${\bf 1} := (1,1)$, $1_{ij,x} := 1 \in N$, $1_{ijk,x} := 1 \in G$. The symbol $\breve H$ is used to emphasize that we deal with {\em nonabelian} cohomology sets. 
Note that our notation is not universally used in literature: sometimes the symbol $\breve H^1 (X,\nN)$ is used instead of $\breve H^2 (X,\nN)$ (see for example \cite{BS08}). The cohomology class of the cocycle pair $\efb = (\efu,\efg)$ is denoted by $[\efb] \equiv [\efu,\efg]$.

\begin{rem}
\label{rem_breve}
(1) Each $N$-cocycle $\efu := \{ u_{ij} \}$ defines the cocycle pair $\rmd \efu := ( \efu , {\bf 1} )$;
(2) If $G$ is Abelian and $\alpha$ is the trivial action, then each cocycle pair $( \efu , \efg )$ defines the cocycle $\efg = \{ g_{ijk} \}$ in the second (Abelian) cohomology of $G$.
\end{rem}

An important class of examples is the following: let $G$ be a topological group and $i : G \to {\bf aut}G$, $i(g) := \wa g$, denote the adjoint action; then taking $\alpha : {\bf aut}G \to {\bf aut}G$ as the identity map yields a crossed module $G \to {\bf aut}G$. Thus we can define the cohomology set $\breve H^2(X,\aG)$ with elements classes of cocycle pairs $(\lambda,\efg)$ of the type
\[
\lambda_{ij} : X_{ij} \to {\bf aut}G
\ \ , \ \
g_{ijk} : X_{ijk} \to G
\ \ : \ \
\left\{
\begin{array}{ll}
\lambda_{ij} \lambda_{jk} = \wa g_{ijk} \lambda_{ik}
\\
g_{ijk} g_{ikl} = \lambda_{ij} (g_{jkl}) g_{ijl} \ ,
\end{array}
\right.
\]
where each $\wa g_{ijk} : X_{ijk} \to {\bf aut}G$ is defined by adjoint action.

\begin{rem}
\label{rem_def_gerbe}
According to the considerations in \cite[\S 2]{BS08}, $\breve H^2 (X,\aG)$ classifies the $G$-gerbes on $X$ up to isomorphism. In the present paper we use the term {\bf $G$-gerbe} to mean a principal $2$--bundle over $X$ with fibre the crossed module $G \to {\bf aut}G$. In this way, cocycle pairs with coefficients in $G \to {\bf aut}G$ are interpreted as transition maps for $G$-gerbes, and $G$-bundles define $G$-gerbes such that the associated cocycle pairs are of the type $\rmd \lambda = ( \lambda,{\bf 1} )$, $\lambda \in H^1(X,{\bf aut}G)$ (see Rem.\ref{rem_breve}).
\end{rem}

We define the maps
\begin{equation}
\label{eq_gamma}
\gamma_* : H^1(X,N) \to H^1(X,{\bf aut}G)
\ \ , \ \
[ u_{ij} ] \mapsto [ \wa u_{ij} ]
\ ,
\end{equation}
\begin{equation}
\label{eq_bgamma}
\breve \gamma_* : \breve H^2(X,G \lto N) \to \breve H^2(X,\aG)
\ \ , \ \
[ \{ u_{ij} \} , \efg ] \mapsto [ \{ \wa u_{ij} \} , \efg ]
\ .
\end{equation}

Let now $N$ denote a topological group and $G \subseteq N$ a normal subgroup. Defining $QG := N \backslash G$ yields the exact sequence
\begin{equation}
\label{def_p}
1 \to G \stackrel{i}{\hra}  N \stackrel{p}{\to}  QG \to 1 \ .
\end{equation}
Let $N_G$ denote the smaller normal subgroup of $N$ containing the set
\[
[G,N] \ := \  \{ gug^{-1}u^{-1} \ , \ g \in G , u \in N \} \ .
\]
We consider the quotient map $\pi_N : N \to {N'} := N \backslash N_G$ and define ${G'} := \pi_N(G)$. Note that by construction ${G'}$ is Abelian; when $G$  is contained in the centre of $N$ we have that $[G,N]$ is trivial and $N_G = \{ 1 \}$, $N=N'$, $G=G'$.

\begin{lem}
\label{lem_Gab}
Let $G$ be a normal subgroup of the topological group $N$ and suppose that the fibration $p : N \to QG := N \backslash G$ has local sections. Then for every locally compact, paracompact Hausdorff space $X$ there is an isomorphism of pointed sets
\begin{equation}
\label{eq_h1_bh1}
\nu : H^1(X,QG) \to \breve H^2(X,G \lto N)
\ .
\end{equation}
Moreover, there is a commutative diagram
\begin{equation}
\label{eq_cd_ab}
\xymatrix{
   G
   \ar[r]^-{ i }
   \ar[d]_-{ \pi_G }
 & N
   \ar[d]^-{ \pi_N }
\\ {G'}
   \ar[r]^-{ i' }
 & {N'}
}
\end{equation}
which yields the map
\begin{equation}
\label{eq_pn}
\pi_{N,*} : \breve H^2(X, G \lto N ) \to H^2(X,{G'}) \ .
\end{equation}
\end{lem}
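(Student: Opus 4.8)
The plan is to obtain $\nu$ by the classical lifting construction attached to the extension (\ref{def_p}) --- the hypothesis that $p$ has local sections being exactly what makes the lifts available --- and then to read the diagram (\ref{eq_cd_ab}) and the map (\ref{eq_pn}) off almost directly. First I would define $\nu$ on cocycles: given a $QG$-cocycle $\{ q_{ij} \}$ on a good cover $\{ X_i \}$, refine the cover (possible since $X$ is paracompact and $p$ has local sections) so that each $q_{ij}$ factors through an open set over which $p$ admits a continuous section; this yields lifts $u_{ij} : X_{ij} \to N$ with $p \circ u_{ij} = q_{ij}$. Since $p( u_{ij} u_{jk} ) = q_{ik} = p( u_{ik} )$, the assignment $x \mapsto u_{ij,x} u_{jk,x} u_{ik,x}^{-1}$ takes values in $G = \ker p$ on $X_{ijk}$ and so defines $g_{ijk} : X_{ijk} \to G$ with $u_{ij} u_{jk} = \unl g_{ijk} u_{ik}$; a short computation using associativity in $N$ gives $g_{ijk} g_{ikl} = \wa u_{ij}(g_{jkl}) g_{ijl}$, so $\efb := ( \{ u_{ij} \} , \{ g_{ijk} \} )$ is a cocycle pair for $G \lto N$. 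Replacing the lifts by $u'_{ij} = \unl h_{ij} u_{ij}$, $h_{ij} : X_{ij} \to G$, gives a cocycle pair cohomologous to $\efb$ via $(1,h)$, and a cohomology $\{ q_{ij} \} \sim \{ q'_{ij} \}$ lifts (after a further refinement) to maps $\wt v_i : X_i \to N$ inducing a cohomology of the associated cocycle pairs; passing to the inverse limit over covers yields a well-defined map of pointed sets $\nu : H^1(X,QG) \to \breve H^2(X, G \lto N)$, with the trivial cocycle going to the trivial cocycle pair.

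Next I would prove bijectivity by exhibiting the inverse. Given a cocycle pair $( \{ u_{ij} \} , \{ g_{ijk} \} )$, put $q_{ij} := p \circ u_{ij}$; since $g_{ijk}$ is $G$-valued, the first cocycle relation projects under $p$ to $q_{ij} q_{jk} = q_{ik}$, so $\{ q_{ij} \}$ is a $QG$-cocycle, and a cohomology $(v,h)$ of cocycle pairs projects to a cohomology of the corresponding $QG$-cocycles (the $\unl h_{ij}$-factor dies under $p$). This descends to cohomology classes and is a two-sided inverse of $\nu$: one composite is literally the identity, and the other merely replaces the chosen lifts of $\{ q_{ij} \}$ by the original $u_{ij}$, hence leaves the class unchanged by the first paragraph. (That cohomology of cocycle pairs is an equivalence relation, needed for $\breve H^2$ to make sense, is \cite[\S 4]{BS08}.)

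It remains to produce (\ref{eq_cd_ab}) and (\ref{eq_pn}). Restricting $\pi_N$ to $G$ gives $\pi_G := \pi_N|_G : G \to G' = \pi_N(G)$, and taking $i'$ to be the inclusion $G' \hookrightarrow N'$ makes (\ref{eq_cd_ab}) commute by construction. Since $[G,N] \subseteq N_G = \ker \pi_N$ one has $\pi_N(g)\pi_N(u) = \pi_N(u)\pi_N(g)$ for $g \in G$, $u \in N$, so $G'$ is central in $N'$ and the adjoint action of $N'$ on $G'$ is trivial. I would then set $\pi_{N,*}[ \{ u_{ij} \} , \{ g_{ijk} \} ] := [ \{ g'_{ijk} \} ]$ with $g'_{ijk} := \pi_G( g_{ijk} ) : X_{ijk} \to G'$: applying $\pi_G$ to the second cocycle relation and using $\pi_G( \wa u_{ij}(g_{jkl}) ) = \pi_N(u_{ij})\, \pi_G(g_{jkl})\, \pi_N(u_{ij})^{-1} = \pi_G(g_{jkl})$ (centrality) turns it into $g'_{ijk} g'_{ikl} = g'_{jkl} g'_{ijl}$, which is the ordinary $2$-cocycle identity for the abelian group $G'$; applying $\pi_G$ to the cohomology relations of cocycle pairs shows, again by centrality, that cohomologous cocycle pairs give $2$-cocycles differing by the Čech coboundary of $h'_{ij} := \pi_G( h_{ij} )$. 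Passing to the limit over covers gives $\pi_{N,*} : \breve H^2(X, G \lto N) \to H^2(X, G')$, sending the distinguished element to $0$.

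The \emph{main obstacle} is not any single identity --- every cocycle or cohomology equation in $N$ and $G$ is a one-line manipulation once the lifts are in hand --- but the bookkeeping around refinements: one must arrange covers fine enough that the local sections of $p$ are usable, and verify that $\nu$, its inverse, and $\pi_{N,*}$ are compatible with refinement so that they descend to the inverse-limit cohomology sets. This is routine but is where the care is needed.
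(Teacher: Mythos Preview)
Your proposal is correct and follows essentially the same approach as the paper: lift the $QG$-cocycle via local sections of $p$ to obtain a cocycle pair, exhibit the inverse by projecting back via $p$, and define $\pi_{N,*}$ by applying $\pi_N$ to the $g_{ijk}$ and using centrality of $G'$ in $N'$. The paper's own proof is a terse sketch (referring to \cite[Lemma 2]{BS08} for the bijectivity of $\nu$), whereas you fill in the well-definedness checks and the refinement bookkeeping more carefully, but there is no substantive difference in route.
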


\begin{proof}
The fact that there is an isomorphism as in (\ref{eq_h1_bh1}) is proved in \cite[Lemma 2]{BS08}, anyway for the reader's convenience we give a sketch of the proof.
Let $\efq := \{ y_{ij} \}$ be a $QG$-cocycle; since $p$ has local sections, up to performing a refinement of $\{ X_i \}$ there are maps $u_{ij} : X_{ij} \to N$ such that $y_{ij} = p \circ u_{ij}$ (it suffice to define $u_{ij} := s \circ y_{ij}$, where $s : U \to N$, $U \subseteq QG$, $y_{ij}(X_{ij}) \subseteq U$, is a local section). Since $p \circ ( u_{ij} u_{jk} u_{ik}^{-1} ) = y_{ij} y_{jk} y_{ki} = 1$, we conclude that there is $g_{ijk} : X_{ijk} \to G$ such that 
$ u_{ij} u_{jk} = g_{ijk} u_{ik}$.
It is trivial to check that $( \{ u_{ij} \} , \{ g_{ijk} \} )$ is a cocycle pair, and we define
\begin{equation}
\label{def_gjik}
\nu [\efq] := [ \{ u_{ij} \} , \{ g_{ijk} \} ] 
\ \ , \ \
[\efq] :=  [ y_{ij} ] \in H^1(X,QG)
\ .
\end{equation}
On the other side, if $\efb := ( \{ u_{ij} \} , \{ g_{ijk} \} )$ is a cocycle pair then defining $p_*[\efb] := [ p \circ u_{ij} ]$ yields an inverse of $\nu$.
We now prove (\ref{eq_pn}). Defining $\pi_G := \pi_N |_G$ yields the commutative diagram (\ref{eq_cd_ab}); if $\efb := ( \efu , \efg )$, $\efg := \{ g_{ijk} \}$, is a cocycle pair, then we define 
$\pi_{N,*}[\efb] := [ \pi_N \circ g_{ijk} ]$ 
and this yields the desired map (note in fact that $\pi_N \circ \wa u_{ij}(g_{jkl}) = \pi_N \circ g_{jkl}$, so that $\{ \pi_N \circ g_{ijk} \}$ is a 2--${G'}$-cocycle).
\end{proof}

Now, by functoriality of $H^1(X, \ \cdot \ )$ there is a sequence of maps of pointed sets
\begin{equation}
\label{eq_pH}
H^1(X,G) \stackrel{i_*}{\lora}  H^1(X,N) \stackrel{p_*}{\lora} H^1(X,QG) \ .
\end{equation}
In fact, $p_* \circ i_*[\efg] = [{\bf 1}]$ for each $\efg \in H^1(X,G)$. In the following result we give an obstruction to $p_*$ being surjective.

\begin{lem}
\label{lem_ddc}
Let $G$ be a normal subgroup of the topological group $N$ such that the fibration $p : N \to QG := N \backslash G$ has local sections. Then we have the following sequence of maps of pointed sets:
\begin{equation}
\label{eq_main}
\xymatrix{
   H^1(X,G)
   \ar[r]^-{ i_* }
 & H^1(X,N)
   \ar[r]^-{ p_* }
   \ar[d]_-{ \gamma_* }
 & H^1(X,QG) 
   \ar[r]^-{ \delta }
   \ar[d]^-{ \breve \gamma_* \circ \nu }
 & H^2(X,{G'})
\\ {} 
 & H^1(X,{\bf aut}G )
   \ar[r]^-{ \rmd_* }
 & \breve H^2 ( X , \aG )
 & {}
}
\end{equation}
Here $\rmd_*$ is induced by the map $\rmd \lambda := (\lambda,{\bf 1})$, and the square is commutative. When $G$ is contained in the centre of $N$, the upper horizontal row is exact and $G'=G$.
\end{lem}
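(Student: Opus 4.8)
The plan is to construct the map $\delta$ explicitly via a cocycle lift, verify that the output is a $2$-cocycle in the sheaf of $G'$, and then chase diagrams. Given a class $[\efq] = [y_{ij}] \in H^1(X,QG)$, first I would apply the isomorphism $\nu$ of \lemref{lem_Gab} to obtain a cocycle pair $\nu[\efq] = [\{u_{ij}\}, \{g_{ijk}\}]$, where the $u_{ij}$ are local lifts of $y_{ij}$ through $p$ and the $g_{ijk} \in G$ measure the failure of $\{u_{ij}\}$ to satisfy the $N$-cocycle relations. I then define
\[
\delta([\efq]) := \pi_{N,*} \circ \nu \, ([\efq]) = [\pi_N \circ g_{ijk}] \in H^2(X,G')
\ ,
\]
using the map $\pi_{N,*}$ from \eqref{eq_pn}; note this already packages the ``well-definedness'' of $\delta$ (independence of the lifts $u_{ij}$ and of the cover) into the two lemma results I am allowed to cite, so the bulk of the construction is for free.

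Next I would record that the square in \eqref{eq_main} commutes: this is immediate from the definitions of $\breve\gamma_*$ in \eqref{eq_bgamma} (which sends $[\{u_{ij}\},\efg]$ to $[\{\wa u_{ij}\},\efg]$), of $\nu$ in \eqref{def_gjik}, and of $\rmd_*$ (which sends $[\lambda_{ij}]$ to $[(\lambda_{ij},{\bf 1})]$), together with the observation that $\rmd_* \circ \gamma_* [u_{ij}] = [(\{\wa u_{ij}\}, {\bf 1})]$ equals $\breve\gamma_* \circ \nu \circ p_* [u_{ij}] = [(\{\wa u_{ij}\}, \efg)]$ precisely when the $g_{ijk}$ can be chosen trivial, which holds for $p_*$-images since there $u_{ij}$ is a genuine $N$-cocycle. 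So the square commutes on the nose at cocycle level, and I would spell this out in one or two lines.

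For the final assertion, suppose $G$ lies in the centre of $N$. As remarked after \eqref{def_p}, then $[G,N]$ is trivial, $N_G = \{1\}$, $N' = N$, $G' = G$, and the crossed-module action $\alpha$ is trivial on $G$; moreover $\pi_N = \mathrm{id}$ and $\pi_G = \mathrm{id}$, so $\delta([\efq]) = [g_{ijk}]$ is literally the class of the obstruction $2$-cocycle in the abelian sheaf cohomology $H^2(X,G)$, which is the classical connecting homomorphism of the short exact sequence of sheaves $1 \to \mathcal S_X(G) \to \mathcal S_X(N) \to \mathcal S_X(QG) \to 1$. Exactness of
\[
H^1(X,G) \xrightarrow{i_*} H^1(X,N) \xrightarrow{p_*} H^1(X,QG) \xrightarrow{\delta} H^2(X,G)
\]
is then the standard long exact cohomology sequence associated with this short exact sequence of abelian sheaves (valid because $X$ is paracompact, the sheaves are fine/soft enough, and $p$ has local sections so the sheaf sequence is indeed exact on the right). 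I would just invoke this, perhaps with the one-line verification that $\delta \circ p_* = \ast$ (since a $p_*$-image lifts to an honest $N$-cocycle, forcing $g_{ijk} = 1$) and that $\ker \delta \subseteq \mathrm{im}\, p_*$ (if $[g_{ijk}]$ is a coboundary, absorb it into the lifts $u_{ij}$ to get a genuine $N$-cocycle mapping to $[\efq]$).

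The only genuinely delicate point is exactness at $H^1(X,N)$, i.e. $\ker p_* = \mathrm{im}\, i_*$ in the abelian case: given an $N$-cocycle $\{u_{ij}\}$ with $[p\circ u_{ij}] = 0$, one has maps $v_i : X_i \to QG$ with $p\circ u_{ij} = v_i (p\circ u_{ij}) v_j^{-1} \cdot (\text{trivial})$, i.e. $p(u_{ij}) = v_i v_j^{-1}$; lifting the $v_i$ through $p$ locally (again using local sections of $p$) and correcting, one exhibits $\{u_{ij}\}$ as cohomologous to a $G$-valued cocycle — this is exactly the classical argument and I expect it to be the main (though routine) obstacle, requiring care only because $p$ is merely assumed to admit local sections rather than being a locally trivial principal bundle. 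Everything else reduces to the cited results \cite{BS08} and to unwinding definitions.
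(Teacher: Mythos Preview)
Your approach is essentially the paper's: define $\delta := \pi_{N,*} \circ \nu$, check the square by unwinding the definitions, and for the central case verify $\ker\delta \subseteq \mathrm{im}\, p_*$ by absorbing a trivializing $1$-cochain $\{h_{ij}\}$ into the lifts $u_{ij}$ to produce an honest $N$-cocycle. That part is fine.

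There is one genuine slip. You write that in the central case the short exact sequence $1 \to \mathcal S_X(G) \to \mathcal S_X(N) \to \mathcal S_X(QG) \to 1$ is a sequence of \emph{abelian} sheaves and invoke the standard long exact sequence. But $G$ being central in $N$ does not make $N$ or $QG$ abelian, so there is no long exact sequence of abelian sheaf cohomology to invoke; one is in the nonabelian \v Cech setting throughout. Fortunately your ``one-line verification'' of $\delta\circ p_* = *$ and your coboundary-absorption argument for $\ker\delta \subseteq \mathrm{im}\, p_*$ are exactly the correct direct proof (and are what the paper does), so the gap is in the framing rather than the substance. Just drop the appeal to abelian sheaf cohomology and rely on the direct argument.

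One further remark: the paper's own proof verifies exactness only at $H^1(X,QG)$ (i.e.\ $\ker\delta = \mathrm{im}\, p_*$), and does not address exactness at $H^1(X,N)$ at all. Your sketch for $\ker p_* = \mathrm{im}\, i_*$ via lifting the $v_i$ through local sections is correct, but it is extra relative to what the paper actually proves; you should be aware that this is not the ``delicate point'' the paper focuses on.
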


\begin{proof}
The proof of the Lemma is based on the maps introduced in Lemma \ref{lem_Gab}.
Define $\delta := \pi_{N,*} \circ \nu$. If $\efu := \{ u_{ij} \}$ is an $N$-cocycle and $\efq := \{ y_{ij} := p \circ u_{ij} \}$ then by definition of $\nu$ we find $\nu[\efq] = [ \efu , {\bf 1} ] = \rmd_*[\efu]$ (see Rem.\ref{rem_breve}); moreover, $\delta[\efq] = \pi_{N,*} [ \efu , {\bf 1}] = [{\bf 1}]$ and this proves that $p_*(H^1(X,N)) \subseteq \ker \delta$. 
We now prove that the square is commutative. To this end, note that for each $N$-cocycle $\efu := \{ u_{ij} \}$ we find 
$\rmd_* \circ \gamma_* [\efu] = 
 [ \rmd \{ \wa u_{ij} \} ] = 
 [ \{ \wa u_{ij} \} , {\bf 1} ]$; 
on the other side, if $\efq := \{ y_{ij} \} := \{ p \circ u_{ij} \}$ then
$\breve \gamma_* \circ \nu \circ p_* [\efu] = 
 \breve \gamma_* \circ \nu [\efq] = 
 \breve \gamma_* [ \efu , {\bf 1} ] = 
 [ \{ \wa u_{ij} \} , {\bf 1} ]$, 
and we conclude that the square is commutative.
Finally, we prove that the upper horizontal row is exact when $G$ is contained in the centre of $N$; to this end, it suffices to verify that $\ker \delta \subseteq p_*(H^1(X,N))$. 
Now, we have $G=G'$ and the map $\pi_{N,*}$ takes the form $\pi_{N,*} [\efu,\efg] := [\efg]$. Since $\nu$ is bijective we have that $\delta[\efq] = [{\bf 1}]$ if and only if $\pi_{N,*}[\efu,\efg] = [{\bf 1}]$, where $[\efu,\efg] = \nu[\efq]$. This means that $\efg = \{ g_{ijk} \}$ is a trivial $G$-$2$-cocycle, so that there are maps $h_{ij} : X_{ij} \to G$ such that $h_{ij} h_{jk} = g_{ijk} h_{ik}$; the pair $( 1 , \{ h_{ij} \} )$ defines a $2$-cocycle equivalence between $(\efu,\efg)$ and $(\efu',1)$, where $\efu' := \{ u_{ij} h_{ji} \}$ is, by construction, an $N$-cocycle. By definition of $\nu$ we have $p_*[\efu'] = [\efq]$, and this proves
$p_*(H^1(X,N)) = \ker \delta$.
Thus the upper horizontal row is exact as desired.
\end{proof}

Note that by classical results when $N$ is a compact Lie group and $G$ is closed, $G$, $QG$, ${G'}$ are compact Lie groups and the fibration $N \to QG$ has local sections.

An interesting class of examples is the following. Let $\bU$ be the unitary group of an infinite dimensional Hilbert space; then, the centre of $\bU$ is the torus $\bT$ and $\bPU := \bU / \bT$ is the projective unitary group. In this case, $\delta$ takes the form
\begin{equation}
\label{eq_dd_class}
\left\{
\begin{array}{ll}
\delta : H^1(X,\bPU) \to H^2(X,\bT) \simeq H^3(X,\bZ)
\\
\delta [\efq] := [\efg] 
\ , \
\efq := \{ y_{ij} \} 
\ , \
\efg := \{ g_{ijk} \}
\end{array}
\right.
\end{equation}
(where $\{ g_{ijk} \}$ is defined by (\ref{def_gjik})) and it is well-known that it is an isomorphism (see \cite[\S 10.7.12]{Dix} and following sections).

In the following Lemma, we define a Chern class for a $QG$-cocycle when $G \subseteq \sud$ and $NG$ is the normalizer of $G$ in $\ud$.
\begin{lem}
\label{lem_cherng}
Let $G \subseteq \sud$. Then there is a map $c : H^1(X,QG) \to H^2 (X,\bZ)$; if $\efq$ is a trivial $QG$-cocycle then $c[\efq] = 0$.
\end{lem}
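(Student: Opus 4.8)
The plan is to construct $c$ by factoring through the normalizer $NG$ and then extracting a determinant-type Chern class, exactly as in the construction of $c(\rho)$ recalled in (\ref{def_cc_tso}). First I would recall that for $G \subseteq \sud$ the normalizer $NG \subseteq \ud$ consists of unitaries $u$ with $uGu^* = G$; since $G \subseteq \sud$ and $\su(d)$ is characteristic, conjugation by $u \in NG$ preserves the determinant character, so the assignment $u \mapsto \det u$ descends to a continuous homomorphism $\chi : QG = NG\backslash G \to \bT$ (well-defined because $\det g = 1$ for $g \in G$). Composing a $QG$-cocycle $\efq = \{ y_{ij} \}$ with $\chi$ yields a $\bT$-cocycle $\chi_*\efq = \{ \chi \circ y_{ij} \}$, i.e. an element of $H^1(X,\bT)$, and then I would use the standard isomorphism $H^1(X,\bT) \simeq H^2(X,\bZ)$ coming from the exponential sequence $0 \to \bZ \to \bR \to \bT \to 0$ (valid since $\bR$ is a soft/fine sheaf on a paracompact space, so $H^k(X,\mS_X(\bR)) = 0$ for $k \ge 1$). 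Define $c[\efq] \in H^2(X,\bZ)$ to be the image of $[\chi_*\efq]$ under this connecting isomorphism.

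Next I would check this is well-defined on cohomology classes: if $\efq$ is cohomologous to $\efq'$ via maps $v_i : X_i \to QG$ with $y_{ij} v_j = v_i y_{ij}$, then applying $\chi$ (a homomorphism into an abelian group) gives $(\chi\circ y_{ij})(\chi\circ v_j) = (\chi\circ v_i)(\chi\circ y_{ij})$, so $\chi_*\efq$ and $\chi_*\efq'$ are cohomologous in $H^1(X,\bT)$; functoriality of the exponential connecting map then gives $c[\efq] = c[\efq']$. Compatibility with refinements of the cover is automatic since $\chi_*$ and the exponential sequence are natural, so $c$ passes to the $\check{\rm C}$ech limit and defines a map $c : H^1(X,QG) \to H^2(X,\bZ)$. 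For the final assertion, if $\efq$ is the trivial $QG$-cocycle then $\chi_*\efq$ is the trivial $\bT$-cocycle, whose class in $H^1(X,\bT)$ is the distinguished element $0$, and the exponential isomorphism is a map of groups (or at least of pointed sets) sending $0$ to $0$, so $c[\efq] = 0$.

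The main obstacle, such as it is, is verifying that $\chi : QG \to \bT$ is genuinely well-defined and continuous: one must know that every $u \in NG$ has $uGu^* = G$ (built into the definition of normalizer) and that $\det$ is a continuous character of $\ud$ restricting trivially to $G$ (immediate from $G \subseteq \sud$), and that the quotient topology on $QG = NG\backslash G$ makes the induced map continuous (standard, since $\det : NG \to \bT$ is continuous and $G$-invariant). A secondary point to be careful about is the identification $H^1(X,\bT) \simeq H^2(X,\bZ)$: one needs $X$ paracompact (or at least that $\mS_X(\bR)$ is acyclic), which is available under the standing hypotheses of $\S\ref{sec_pb}$, and one should note for consistency that this is the same mechanism already invoked implicitly in (\ref{eq_dd_class}) and in the definition of the first Chern class of a line bundle used for (\ref{def_cc_tso}). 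Everything else is a routine diagram chase, so I would not belabor it.
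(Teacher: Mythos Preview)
Your proposal is correct and follows essentially the same approach as the paper: define the determinant morphism $\det : NG \to \bT$, observe it factors through $QG$ since $G \subseteq \sud$, apply functoriality of $H^1(X,\cdot)$, and then the standard isomorphism $H^1(X,\bT) \simeq H^2(X,\bZ)$. The only difference is that you spell out the well-definedness checks and the exponential sequence explicitly, while the paper dispatches these in a single sentence; your remark about $\su(d)$ being characteristic is unnecessary (all that is needed is $\det|_G = 1$), but it does no harm.
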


\begin{proof}
It suffice to note that the determinant defines a group morphism $\det : NG \to \bT$. Since $G \subseteq \sud$, we find that $\det$ factorizes through a morphism $\det_Q : QG \to \bT$. The functoriality of $H^1 ( X , \ \cdot \ )$, and the well-known isomorphism $H^1(X,\bT) \simeq H^2(X,\bZ)$, complete the proof.
\end{proof}

\section{Bundles of \sC algebras and cohomology classes.}
\label{sec_bu_co}

In the present section we give an application of the cohomology class $\delta$ defined in Lemma \ref{lem_ddc} to bundles of \sC algebras. To this end, in the following lines we present some constructions involving principal bundles and \sC dynamical systems.

Let $\mF_\bullet$ be a \sC algebra and $X$ a locally compact, paracompact Hausdorff space. Then the cohomology set $H^1(X,{\bf aut}\mF_\bullet)$ can be interpreted as the set of isomorphism classes of $\mF_\bullet$-bundles, in the following way: for each ${\bf aut}\mF_\bullet$-cocycle $\efu :=$ $\{ u_{ij} \}$, denote the fibre bundle with fibre $\mF_\bullet$ and transition maps $\{ u_{ij} \}$ by 
\[
\pi : \wa \mF \to X
\]
(see \cite[5.3.2]{Hus}); by construction, $\wa \mF$ is endowed with local charts
\[
\pi_i : \wa \mF |_{X_i} := \pi^{-1}(X_i) \to X_i \times \mF_\bullet \ ,  
\]
where $\{ X_i \}_i$ is an open cover of $X$, in such a way that
\begin{equation}
\label{eq_lc_c}
\pi_i \circ \pi_j^{-1} (x,v_\bullet) \ = \ ( x , u_{ij,x} (v_\bullet) )
\ \ , \ \
x \in X_{ij} \ , \ v_\bullet \in \mF_\bullet
\ .
\end{equation}
The set of sections 
$t : X \to \wa \mF$, $p \circ t = id_X$,
such that the norm function $\{ X \ni x \mapsto \left\| t(x) \right\| \}$ vanishes at infinity has a natural structure of $\mF_\bullet$-bundle, that we denote by $\mF_\efu$. 
%
%
%
On the converse, given an $\mF_\bullet$-bundle $\mF$, using the method exposed in \cite[\S 3.1]{Vas07} (see also the related references), we can construct a fibre bundle $\pi : \wa \mF \to X$ with fibre $\mF_\bullet$, in such a way that $\mF$ is isomorphic to the $C_0(X)$-algebra of sections of $\wa \mF$. The correspondence $\mF \mapsto \wa \mF$ is functorial: $C_0(X)$-morphisms $\tau : \mF_1 \to \mF_2$ correspond to bundle morphisms $\wa \tau : \wa \mF_1 \to \wa \mF_2$ such that $\tau (t) = \wa \tau \circ t$, $t \in \mF_1$.

Let $\mF_1$, $\mF_2$ be $\mF_\bullet$-bundles and $K$ a subgroup of ${\bf aut}\mF_\bullet$; a $C_0(X)$-isomorphism $\beta : \mF_1 \to \mF_2$ is said to be {\em $K$-equivariant} if there is an open cover $\{ X_i \}_{i \in I}$ trivializing $\wa \mF_1$, $\wa \mF_2$ by means of local charts $\pi_{i,k} : \wa \mF_k |_{X_i} \to X_i \times \mF_\bullet$, $k = 1,2$, $i \in I$, with automorphisms $\beta_{i,x} \in K$, $i \in I$, $x \in X_i$, satisfying
\[
\wa \beta \circ \pi_{i,1}^{-1} ( x,v_\bullet )
\ = \
\pi_{i,2}^{-1}  ( x , \beta_{i,x} (v_\bullet) )
\ \ , \ \
v_\bullet \in \mF_\bullet
\]
(roughly speaking, at the local level $\beta$ is described by automorphisms in $K$). In such a case, we say that $\mF_1$ is $K$-$C_0(X)$-{\em isomorphic} to $\mF_2$. Moreover, we say that an $\mF_\bullet$-bundle $\mF$ has {\em structure group} $K$ if $\mF = \mF_\efu$ for some $K$-cocycle $\efu$. It is easy to verify that $K$-cocycles $\efu$, $\efv$ are equivalent in $H^1(X,K)$ if and only if the associated $\mF_\bullet$-bundles are $K$-$C_0(X)$-isomorphic.

\begin{rem}
\label{rem_lt_pds}
Let $( \mA_\bullet , \rho_\bullet , a_\bullet )$ be a pointed \sC dynamical system and $K :=$ ${\bf aut} ( \mA_\bullet , \rho_\bullet , a_\bullet ) \subseteq$ ${\bf aut} \mA_\bullet$. An $\mA_\bullet$-bundle $\mA$ has structure group $K$ if and only if there is $\rho \in {\bf end}_X \mA$ and $a \in \mA$ with local charts $\pi_i : \wa \mA |_{X_i} \to X_i \times \mA_\bullet$, such that 
\[
\pi_i \circ \wa \rho (v)  =   \rho_\bullet (v_\bullet)
\ \ , \ \
\pi_i (a) =  ( x , a_\bullet )
\ \ , \ \
v \in \wa \mA \ , \ ( x,v_\bullet ) := \pi_i (v)
\ .
\]
In this case, we say that $( \mA , \rho , a )$ is a {\bf locally trivial pointed \sC dynamical system}. Now, $\beta : \mA \to \mA'$ is a $K$-$C_0(X)$-isomorphism if and only if $\beta$ is an isomorphism of pointed \sC dynamical systems. So that, $H^1(X,K)$ describes the set of isomorphism classes of locally trivial pointed \sC dynamical systems $( \mA , \rho , a )$ with fibre $( \mA_\bullet , \rho_\bullet , a_\bullet )$. 
In the sequel, we shall make use of the following fact: if $N$ is a subgroup of $K$ and $\efn$ is an $N$-cocycle, then we may regard $\efn$ as a $K$-cocycle; thus, if $\mA$ is an $\mA_\bullet$-bundle with structure group $N$, then $\mA$ defines a locally trivial pointed \sC dynamical system $( \mA , \rho , a )$.
\end{rem}

The next lemma is an application of the previous ideas.
\begin{lem}
\label{lem_qg}
Let $d \in \bN$, $G \subseteq \ud$ be covariant and $QG := NG \backslash G$. Then for each compact Hausdorff space $X$ there are one-to-one correspondences between: 
(1) $QG$-cocycles; 
(2) locally trivial pointed \sC dynamical systems with fibre $( \mO_G , \sigma_G , \theta )$; 
(3) $\wa G$-bundles.
\end{lem}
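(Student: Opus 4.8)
The plan is to prove Lemma \ref{lem_qg} by chaining together the identifications already established in the excerpt, so that the content reduces to checking that the relevant automorphism groups coincide. Concretely, I would first recall that Theorem \ref{thm_amen2} gives a bijection $Q : \sym(X,\wa G) \to H^1(X,QG)$ when $G \subseteq \ud$ is covariant, and that Theorem \ref{thm_oro} gives a one-to-one correspondence between pointed \sC dynamical systems $(\mO_G,\sigma_G,\theta)$ and singly generated symmetric tensor \sC categories $\wa G$ with distinguished arrow $\theta$. Thus the correspondence between (2) and (3) is essentially a restatement of Theorem \ref{thm_oro} at the level of locally trivial bundles: a $\wa G$-bundle $(\wa\rho,\otimes,\iota,\eps)$ produces the $C_0(X)$-algebra bundle $\oro = \mO_\rho$ with its canonical endomorphism $\rho_*$ and symmetry $\eps$, and by the last two sentences of Theorem \ref{thm_oro} local triviality of the category bundle is equivalent to local triviality of the \sC algebra bundle, with the fibre being exactly $(\mO_G,\sigma_G,\theta)$. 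I would record this as the identification (2) $\leftrightarrow$ (3), noting that isomorphisms on either side correspond under the functoriality clause of Theorem \ref{thm_oro}.

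Next I would establish (1) $\leftrightarrow$ (2) using Remark \ref{rem_lt_pds}. Set $(\mA_\bullet,\rho_\bullet,a_\bullet) := (\mO_G,\sigma_G,\theta)$ and $K := {\bf aut}(\mO_G,\sigma_G,\theta)$. By Remark \ref{rem_lt_pds}, $H^1(X,K)$ classifies locally trivial pointed \sC dynamical systems with this fibre. So the point is to show $K \simeq QG$ as topological groups, which is precisely Theorem \ref{thm_amen2}: the action (\ref{eq_qg}) of $QG$ on $(\mO_G,\sigma_G,\theta)$ is an isomorphism onto ${\bf aut}\wa G \simeq {\bf aut}(\mO_G,\sigma_G,\theta) = K$ when $G$ is covariant. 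Consequently $H^1(X,QG) \simeq H^1(X,K)$, and the latter is the set classifying locally trivial pointed \sC dynamical systems with fibre $(\mO_G,\sigma_G,\theta)$. This gives the bijection between (1) and (2), compatibly with the notion of isomorphism on each side since the isomorphism $QG \simeq K$ is a homeomorphism and $H^1(X,-)$ is functorial.

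Finally I would assemble the three correspondences into a commuting triangle: (1) $\leftrightarrow$ (2) via Remark \ref{rem_lt_pds} and Theorem \ref{thm_amen2}, (2) $\leftrightarrow$ (3) via Theorem \ref{thm_oro}, and the composite (1) $\leftrightarrow$ (3) agreeing with the map $Q$ of Theorem \ref{thm_amen2} up to the identification of $\wa G$-bundles with their DR-dynamical systems. This last consistency check is the only substantive point: I want to verify that starting from a $QG$-cocycle $\efq$, forming the associated locally trivial pointed \sC dynamical system $(\mO_\efq,\rho_\efq,\eps_\efq)$, and then reading off its category of intertwiners, one recovers the $\wa G$-bundle whose class under $Q$ is $[\efq]$. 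This follows because both constructions are defined by gluing the constant local models via the same $QG$-valued transition data, under the identifications $QG \simeq {\bf aut}\wa G \simeq {\bf aut}(\mO_G,\sigma_G,\theta)$.

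The main obstacle is not any single hard argument but the bookkeeping: making sure that "isomorphism" means the same thing in all three settings (pointed \sC dynamical system morphisms, tensor \sC functors preserving $\theta$, and cocycle equivalence in $H^1(X,QG)$) so that the correspondences are genuinely bijections of the indicated sets rather than merely of underlying classes. I expect the proof to consist of a short paragraph invoking Remark \ref{rem_lt_pds} for (1) $\leftrightarrow$ (2), a sentence citing Theorem \ref{thm_oro} for (2) $\leftrightarrow$ (3), and a remark that the composite is $Q$; the subtlety, handled by the covariance hypothesis through Theorem \ref{thm_amen2}, is exactly the computation of the structure group $K = {\bf aut}(\mO_G,\sigma_G,\theta)$ as $QG$.
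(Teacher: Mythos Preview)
Your proposal is correct and follows essentially the same approach as the paper: the paper's proof is a single sentence invoking Remark~\ref{rem_lt_pds}, Theorem~\ref{thm_amen2}, and Theorem~\ref{thm_oro}, which is exactly the chain of identifications you spell out. Your additional bookkeeping about the compatibility of the three correspondences is sound but more detailed than what the paper records.
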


\begin{proof}
Consider the pointed \sC dynamical system $( \mO_G , \sigma_G , \theta )$ with the action (\ref{eq_qg}), then apply Rem.\ref{rem_lt_pds}, Thm.\ref{thm_amen2} and Thm.\ref{thm_oro}.
\end{proof}

The following construction may be regarded as an analogue of the notion of group action in the setting of \sC bundles and appeared in \cite[\S 3.2]{Vas07}. Let $G$ be a subgroup of ${\bf aut}\mF_\bullet$. A {\em gauge \sC dynamical system with fibre $( \mF_\bullet , G )$} is given by a triple $( \mF , \mG , \alpha )$, where $\mF$ is an $\mF_\bullet$-bundle, $\eta : \mG \to X$ is a bundle with fibre $G$ and 
\[
\alpha : \mG \times_X \wa \mF \to \wa \mF
\]
is a continuous map such that for each $x \in X$ there is a neighbourood $U$ of $x$ with local charts 
\begin{equation}
\label{eq_ltF}
\eta_U : \mG|_U \to U \times G
\ \ , \ \
\pi_U : \wa \mF|_U \to U \times \mF_\bullet
\ ,
\end{equation}
satisfying
\begin{equation}
\label{eq_gact}
\pi_U \circ \alpha ( y , v ) = ( x , y_\bullet (v_\bullet) )
\ ,
\end{equation}
where 
\[
x := \pi(v) = \eta(y)
\ \ , \ \
(x,y_\bullet) := \eta_U (y)
\ \ , \ \
(x,v_\bullet) := \pi_U  (v)
\]
(so that $y_\bullet \in G \subseteq {\bf aut}\mF_\bullet$ and $v_\bullet \in \mF_\bullet$). We say that $( \mF , \mG , \alpha )$ has structure group $K$ if $\mF$ has structure group $K$.

Usual continuous actions are related with gauge \sC dynamical systems in the following way: if $S$ is a set of sections of $\mG$ which is also a group w.r.t. the operations defined pointwise, then there is an action $S \to {\bf aut}_X \mF$; in particular, every continuous action $G \to {\bf aut}_X \mF$ can be regarded as a gauge action on $\mF$ of the bundle $\mG := X \times G$ (see \cite[\S 3.2]{Vas07} for details).

The {\em fixed-point algebra} of $( \mF , \mG , \alpha )$ is given by the $C_0(X)$-algebra
\[
\mF^\alpha
:=
\{
t \in \mF : \alpha ( y , t(x) ) = t(x) \ \ , \ \ x \in X , y \in \eta^{-1} (x)
\}
\ .
\]
Let $\mA_\bullet \subseteq \mF_\bullet$ denote the fixed-point algebra w.r.t. the $G$-action. Then (\ref{eq_gact}) implies that $\mF^\alpha$ is an $\mA_\bullet$-bundle.

We now expose the main construction of the present section.
Again, we consider a \sC algebra $\mF_\bullet$ and a subgroup $K$ of ${\bf aut}\mF_\bullet$; moreover, we pick a subgroup $G$ of $K$ and denote the fixed-point algebra w.r.t. the $G$-action by $\mA_\bullet$. We consider the normalizer of $G$ in $K$ and the associated quotient group, as follows:
\begin{equation}
\label{def_gqg}
\left\{
\begin{array}{ll}
NG  := \{ u \in K : u \circ g \circ u^{-1} \in G  \}
\\
p : NG \to QG := NG \backslash G
\ .
\end{array}
\right.
\end{equation}
By construction, for every $u \in NG$, $g \in G$, $a \in \mA_\bullet$ there is $g' \in G$ such that
$g \circ u (a) =$ $u \circ g' (a) = u (a)$.
The above equalities imply that the $NG$-action on $\mF_\bullet$ factorizes through a $QG$-action
\begin{equation}
\label{eq_QG_act}
QG \to {\bf aut} \mA_\bullet 
\ \ , \ \
p(u) \mapsto u |_{\mA_\bullet}
\ , \
u \in NG
\ ;
\end{equation}
thus, applying the above procedure, for every $QG$-cocycle $\efq$ we can construct an $\mA_\bullet$-bundle $\mA_\efq$.
\begin{lem}
\label{lem_dd}
Let $\efq :=$ $( \{X_i \} , \{ y_{ij} \} ) \in$ $H^1(X,QG)$ and $\mA_\efq$ denote the associated $\mA_\bullet$-bundle. Then the following are equivalent:
\begin{enumerate}
\item There is a gauge \sC dynamical system $(\mF,\mG,\alpha)$ with fibre $(\mF_\bullet,G)$ and structure group $NG$, such that $\mA_\efq$ is $QG$-$C_0(X)$-isomorphic to $\mF^\alpha$;
\item there is an $NG$-cocycle $\efn$ such that $[\efq] = p_*[\efn]$, where $p_* : H^1(X,NG) \to H^1(X,QG)$ is the map induced by (\ref{def_gqg}(2)).
\end{enumerate}
\end{lem}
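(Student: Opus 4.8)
The plan is to prove the equivalence by passing back and forth between the geometric objects and their \v Cech cocycles, using as a dictionary the fact recalled above that $K$-cocycles are equivalent in $H^1(X,K)$ precisely when the associated $\mF_\bullet$-bundles are $K$-$C_0(X)$-isomorphic, applied with $K=QG$ and $\mF_\bullet$ replaced by $\mA_\bullet$. Everything reduces to the case $K=QG$, $\mF_\bullet = \mA_\bullet$ of this dictionary together with the computation of one fixed-point algebra.

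For $(2) \Rightarrow (1)$ I would start from an $NG$-cocycle $\efn = \{ u_{ij} \}$, relative to a good cover $\{ X_i \}$, with $p_*[\efn] = [\efq]$, and manufacture a gauge \sC dynamical system out of it. Take $\mF := \mF_\efn$, which by construction is an $\mF_\bullet$-bundle with structure group $NG \subseteq K$; take $\eta : \mG \to X$ to be the fibre bundle with fibre $G$ and transition maps the conjugations $x \mapsto u_{ij,x}(\,\cdot\,)u_{ij,x}^{-1} \in {\bf aut}G$ — these are well defined exactly because $u_{ij,x}$ lies in $NG$ as defined in (\ref{def_gqg}), and $\mG$ inherits a compatible group-bundle structure; and define $\alpha : \mG \times_X \wa\mF \to \wa\mF$ in the charts (\ref{eq_lc_c}) by sending $(y,v)$ to the point whose $i$-th chart coordinate is $(x, y_\bullet(v_\bullet))$, where $(x,y_\bullet)$, $(x,v_\bullet)$ are the chart coordinates of $y$, $v$. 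The one genuine computation is that this recipe does not depend on the chart: on $X_{ij}$ the coordinates transform by $y_\bullet^{\,i} = u_{ij,x} y_\bullet^{\,j} u_{ij,x}^{-1}$ and $v_\bullet^{\,i} = u_{ij,x}(v_\bullet^{\,j})$, whence $y_\bullet^{\,i}(v_\bullet^{\,i}) = u_{ij,x}\bigl(y_\bullet^{\,j}(v_\bullet^{\,j})\bigr)$, which is precisely the transition rule (\ref{eq_lc_c}) applied to the output; continuity of $\alpha$ and the verification that $(\mF,\mG,\alpha)$ is indeed a gauge \sC dynamical system with structure group $NG$ satisfying (\ref{eq_gact}) then follow from this local description, as in \cite[\S 3.2]{Vas07}. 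Next I would compute $\mF^\alpha$: restricting the charts of $\mF_\efn$ to $\mF^\alpha$, the fibre over $x$ is $\{ v_\bullet \in \mF_\bullet : y_\bullet(v_\bullet) = v_\bullet,\ y_\bullet \in G \} = \mA_\bullet$ and the transition maps become $u_{ij,x}|_{\mA_\bullet}$; by the factorization (\ref{eq_QG_act}) these depend only on $p(u_{ij,x}) \in QG$ and agree with the $QG$-action attached to the cocycle $\{ p \circ u_{ij} \}$. Hence $\mF^\alpha$ is, as an $\mA_\bullet$-bundle with structure group $QG$, exactly the bundle associated with $p_*\efn$; since $[p_*\efn] = p_*[\efn] = [\efq]$ in $H^1(X,QG)$, the dictionary yields a $QG$-$C_0(X)$-isomorphism $\mA_\efq \to \mF^\alpha$, which is (1).

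For $(1) \Rightarrow (2)$ I would run the same identification in reverse. Given $(\mF,\mG,\alpha)$ with fibre $(\mF_\bullet,G)$ and structure group $NG$, write $\mF = \mF_\efn$ for an $NG$-cocycle $\efn = \{ u_{ij} \}$ and choose the charts realizing the gauge structure among (a refinement of) those of $\mF_\efn$; the computation of the previous paragraph shows that $\mF^\alpha$ has structure group $QG$ with associated cocycle $\{ p \circ u_{ij} \}$, i.e. it is the bundle attached to $p_*\efn$. A $QG$-$C_0(X)$-isomorphism $\mA_\efq \simeq \mF^\alpha$ then forces $[\efq] = [p_*\efn] = p_*[\efn]$, which is (2). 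Here Rem.\ref{rem_lt_pds} supplies the relevant language identifying $K$-$C_0(X)$-isomorphisms of the \sC bundles in play with the appropriate cocycle equivalences.

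The main obstacle, such as it is, is the combination of the chart-independence of $\alpha$ in $(2)\Rightarrow(1)$ with the identification $\mF^\alpha \cong \mA_{p_*\efn}$ of its fixed-point algebra: this is the single point where the defining property of $NG$ in (\ref{def_gqg}) and the resulting factorization (\ref{eq_QG_act}) of the $NG$-action through a $QG$-action are used essentially. The rest is the routine translation between structure-group data and \v Cech cohomology; note in particular that no local sections of $p : NG \to QG$ are needed, since in $(2)\Rightarrow(1)$ the $NG$-cocycle is given and in $(1)\Rightarrow(2)$ it is extracted directly from the structure group of $\mF$.
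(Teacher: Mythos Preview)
Your proposal is correct and follows essentially the same approach as the paper: in both directions you build (or read off) the $NG$-cocycle $\efn = \{u_{ij}\}$, construct $\mG$ via conjugation transition maps, verify chart-independence of $\alpha$ through the identity $u_{ij,x} \circ g(v_\bullet) = (u_{ij,x}\, g\, u_{ij,x}^{-1}) \circ u_{ij,x}(v_\bullet)$, and identify $\mF^\alpha$ with the $\mA_\bullet$-bundle having $QG$-cocycle $\{p \circ u_{ij}\}$. Your explicit write-up of the chart-compatibility check and the remark that no local sections of $p$ are needed are welcome clarifications, but the argument is the paper's own.
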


\begin{proof}
(1) $\Rightarrow$ (2): Let us denote the $NG$-cocycle associated with $\mF$ by $\efn :=$ $( \{ X_i \} , \{ u_{ij} \} )$. We assume that $\{ X_i \}$ trivializes $\mG$ and $\wa \mF$ (otherwise, we perform a refinement of $\{ X_i \}$), so that we have local charts $\eta_i : \mG |_{X_i} \to X_i \times G$, $\pi_i : \wa \mF |_{X_i} \to X_i \times \mF_\bullet$ fulfilling (\ref{eq_gact}), with $\{ \pi_i \}$ related with $\{ u_{ij} \}$ by means of (\ref{eq_lc_c}). Let us consider the fibre bundle $\wa \mF^\alpha \to X$ associated with $\mF^\alpha$; then we have an inclusion $\wa \mF^\alpha \subseteq \wa \mF$ and (\ref{eq_gact}) implies
\[
\pi_i( \wa \mF^\alpha |_{X_i} ) = X_i \times \mA_\bullet
\ \Rightarrow \
X_{ij} \times \mA_\bullet = \pi_i \circ \pi_j^{-1} ( X_{ij} \times \mA_\bullet )
\ .
\]
We conclude by (\ref{eq_lc_c}) that $v_{ij,x} :=$ $u_{ij,x} |_{\mA_\bullet} \in$ ${\bf aut}\mA_\bullet$ for every $x \in X_{ij}$ and pair $i,j$. Moreover, by (\ref{eq_QG_act}) we find that $v_{ij} = p \circ u_{ij}$, as $i,j$ vary, yield a set of transition maps for $\wa \mF^\alpha$. Finally, since $\mA$ is $QG$-$C_0(X)$-isomorphic to $\mF^\alpha$, we conclude that $[\efq] = p_*[\efn]$.

\noindent (2) $\Rightarrow$ (1): Let $\efn :=$ $\{ u_{ij} \}$. We define $\mF$ as the $\mF_\bullet$-bundle with cocycle $\efn$ and $\mG \to X$ as the fibre bundle with fibre $G$ and transition maps $\gamma_{ij,x}(g) :=$ $u_{ij,x} \circ g \circ u_{ij,x}^{-1}$, $x \in X_{ij}$. Such transition maps define a cocycle with class $\gamma_*[\efn] \in H^1(X,{\bf aut}G)$. Now, we note that
\[
u_{ij,x} \circ g (v_\bullet) 
\ = \ 
\gamma_{ij,x} (g) \circ u_{ij,x}(v_\bullet)
\ \ , \ \
g \in G \ , \ v_\bullet \in \mF_\bullet \ , \ x \in X_{ij}
\ .
\]
This implies that if we consider the maps 
\[
\alpha_i : 
(X_i \times G) \times_{X_i} (X_i \times \mF_\bullet) 
\ \to \ 
X_i \times \mF_\bullet
\ \ , \ \
\alpha_i \left( (x,g) , (x,v_\bullet) \right) := (  x , g(v_\bullet)  )
\ ,
\]
then there is a unique gauge action $\alpha : \mG \times_X \wa \mF  \to  \wa \mF$ with local charts $\{ \eta_i \}$ of $\mG$ associated with $\{ \gamma_{ij} \}$ and $\{ \pi_i \}$ of $\wa \mF$ associated with $\{ u_{ij} \}$, fulfilling 
\[
\alpha_i 
\ = \ 
\pi_i  \circ \alpha \circ \left( \eta_i^{-1} \times \pi_i^{-1} \right)
\]
for every index $i$. Since $\mA$ has $QG$-cocycle $\{ p \circ u_{ij} \}$, reasoning as in the first part of the proof we conclude that $\mA$ is $QG$-$C_0(X)$-isomorphic to $\mF^\alpha$.
\end{proof}

\begin{cor}
With the notation of the previous Lemma, if $\gamma_*[\efn] = [{\bf 1}]$ then there is a continuous action $\alpha_\bullet : G \to {\bf aut} \mF$ with fixed-point algebra $QG$-$C_0(X)$-isomorphic to $\mA$.
\end{cor}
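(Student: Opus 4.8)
The plan is to revisit the explicit construction performed in the proof of \lemref{lem_dd}, implication $(2)\Rightarrow(1)$, and to observe that the hypothesis $\gamma_*[\efn]=[{\bf 1}]$ forces the group bundle produced there to be trivial, so that the gauge action collapses to an ordinary $C_0(X)$-action. Take $\efn=\{u_{ij}\}$ as in condition (2) of \lemref{lem_dd}, so that $[\efq]=p_*[\efn]$, and recall the objects built there: the $\mF_\bullet$-bundle $\mF$ with cocycle $\efn$; the fibre bundle $\eta:\mG\to X$ with fibre $G$ and transition maps $\gamma_{ij,x}(g):=u_{ij,x}\circ g\circ u_{ij,x}^{-1}$; and the gauge action $\alpha:\mG\times_X\wa\mF\to\wa\mF$, whose fixed-point algebra $\mF^\alpha$ is $QG$-$C_0(X)$-isomorphic to $\mA$. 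The key point recorded in that proof is that the family $\{\gamma_{ij}\}$ is an ${\bf aut}G$-cocycle representing exactly the class $\gamma_*[\efn]\in H^1(X,{\bf aut}G)$.

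Since by hypothesis $\gamma_*[\efn]=[{\bf 1}]$, the bundle $\mG$ admits a global trivialization $\Phi:\mG\to X\times G$; because its transition maps $\gamma_{ij}$ take values in ${\bf aut}G$, the fibrewise group operation on $\mG$ is globally well defined and $\Phi$ is a fibrewise group isomorphism. Hence the ``constant'' sections $\sigma_g:x\mapsto\Phi^{-1}(x,g)$, $g\in G$, form a subgroup $S$ of the group of continuous sections of $\mG$ with pointwise operations, and $g\mapsto\sigma_g$ is a group isomorphism $G\to S$. Applying the correspondence between groups of sections of $\mG$ and $C_0(X)$-actions on $\mF$ from \cite[\S 3.2]{Vas07} (recalled just before \lemref{lem_dd}), $S$ induces an action $S\to{\bf aut}_X\mF$; composing with $G\simeq S$ yields a homomorphism $\alpha_\bullet:G\to{\bf aut}_X\mF\subseteq{\bf aut}\mF$, continuous because $\alpha$ is continuous and $g\mapsto\sigma_g$ is continuous.

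It remains to identify the fixed-point algebra. For each $x\in X$ the set $\{\sigma_g(x):g\in G\}=\{\Phi^{-1}(x,g):g\in G\}$ is the whole fibre $\eta^{-1}(x)$. Thus, for $t\in\mF$, the condition $\alpha_\bullet(g)(t)=t$ for all $g\in G$ is, fibrewise, exactly $\alpha(y,t(x))=t(x)$ for all $x\in X$ and $y\in\eta^{-1}(x)$; that is, $\mF^{\alpha_\bullet}=\mF^\alpha$. Since $\mF^\alpha$ is $QG$-$C_0(X)$-isomorphic to $\mA$ by \lemref{lem_dd}, the same holds for $\mF^{\alpha_\bullet}$, which proves the claim.

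I expect the only genuinely delicate point to be the verification that restricting the gauge action to the sections $\sigma_g$ yields a bona fide continuous homomorphism $G\to{\bf aut}\mF$ with the asserted fixed-point algebra --- in particular that ``constant section'' is meaningful --- which is precisely the step where triviality of $\mG$ \emph{as an} ${\bf aut}G$-\emph{bundle} (rather than merely as a fibre bundle) is used; everything else is bookkeeping with the local charts already set up in the proof of \lemref{lem_dd}.
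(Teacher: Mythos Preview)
Your proof is correct and follows the same route as the paper: the hypothesis $\gamma_*[\efn]=[{\bf 1}]$ yields a trivialization $\mG\simeq X\times G$, and the gauge action then collapses to a continuous action $G\to{\bf aut}_X\mF$. The paper compresses your argument into two lines by citing \cite[Cor.3.4]{Vas07} for the passage from a trivial group bundle to an ordinary $G$-action, whereas you spell out that step explicitly via constant sections; the content is the same.
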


\begin{proof}
Since $\gamma_*[\efn] = [{\bf 1}]$ there is an isomorphism $\mG \simeq X \times G$. Thus, the gauge action $\alpha : \mG \times_X \wa \mF$ induces the continuous action $\alpha_\bullet : G \to {\bf aut}_X \mF$ (see \cite[Cor.3.4]{Vas07}).
\end{proof}

\begin{thm}
\label{thm_dd2}
Let $G \subseteq K \subseteq {\bf aut}\mF_\bullet$, $\mA_\bullet$ denote the fixed-point algebra of $\mF_\bullet$ w.r.t. the $G$-action and $QG$ defined as in (\ref{def_gqg}(2)). For each $\mA_\bullet$-bundle $\mA$ with structure group $QG$ there is a class
\begin{equation}
\label{eq_dd2}
\delta (\mA) \in H^2(X,{G'})
\end{equation}
fulfilling the following property: if $\mA$ is $QG$-$C_0(X)$-isomorphic to the fixed-point algebra of a gauge \sC dynamical system $(\mF,\mG,\alpha)$ with fibre $(\mF_\bullet,G)$ and structure group $NG$, then $\delta (\mA) = [{\bf 1}]$. The converse is also true when $G$ lies in the centre of $NG$.
\end{thm}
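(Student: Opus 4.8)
The plan is to obtain $\delta(\mA)$ by transporting the map $\delta$ of Lemma~\ref{lem_ddc} along the dictionary between $\mA_\bullet$-bundles with structure group $QG$ and $QG$-cocycles, and then to read off both implications directly from Lemma~\ref{lem_dd}. Throughout I assume, as in \S\ref{sec_pb}, that the fibration $p : NG \to QG$ of (\ref{def_gqg}) has local sections; this is automatic in the main case of interest, where $K$ is a compact Lie group and $G$ is closed.

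\emph{Definition of the invariant.} Since $\mA$ has structure group $QG$, we have $\mA = \mA_\efq$ for some $QG$-cocycle $\efq$, with $QG$ acting on $\mA_\bullet$ via (\ref{eq_QG_act}). As recalled just before Remark~\ref{rem_lt_pds}, two $QG$-cocycles give $QG$-$C_0(X)$-isomorphic bundles if and only if they are cohomologous, so the class $[\efq] \in H^1(X,QG)$ is an invariant of the $QG$-$C_0(X)$-isomorphism class of $\mA$. I then set
\[
\delta(\mA) \ := \ \delta[\efq] \ \in \ H^2(X,{G'}) \ ,
\]
where $\delta : H^1(X,QG) \to H^2(X,{G'})$ is the map built in Lemma~\ref{lem_ddc} from the exact sequence $1 \to G \hra NG \stackrel{p}{\to} QG \to 1$ (namely $\delta = \pi_{NG,*}\circ\nu$).

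\emph{The two implications.} If $\mA$ is $QG$-$C_0(X)$-isomorphic to $\mF^\alpha$ for a gauge \sC dynamical system $(\mF,\mG,\alpha)$ with fibre $(\mF_\bullet,G)$ and structure group $NG$, then $\mA_\efq$ is $QG$-$C_0(X)$-isomorphic to $\mF^\alpha$, so the implication (1)$\Rightarrow$(2) of Lemma~\ref{lem_dd} furnishes an $NG$-cocycle $\efn$ with $[\efq] = p_*[\efn]$; since $p_*(H^1(X,NG)) \subseteq \ker\delta$ by Lemma~\ref{lem_ddc}, we get $\delta(\mA) = [{\bf 1}]$. Conversely, when $G$ lies in the centre of $NG$, Lemma~\ref{lem_ddc} gives ${G'}=G$ and exactness of $H^1(X,G) \to H^1(X,NG) \stackrel{p_*}{\to} H^1(X,QG) \stackrel{\delta}{\to} H^2(X,G)$, so $\delta(\mA)=\delta[\efq]=[{\bf 1}]$ forces $[\efq]\in p_*(H^1(X,NG))$; applying the implication (2)$\Rightarrow$(1) of Lemma~\ref{lem_dd} to a cocycle $\efn$ with $[\efq]=p_*[\efn]$ produces a gauge \sC dynamical system $(\mF,\mG,\alpha)$ with fibre $(\mF_\bullet,G)$ and structure group $NG$ whose fixed-point algebra is $QG$-$C_0(X)$-isomorphic to $\mA_\efq = \mA$.

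So the proof is essentially a formal concatenation of Lemmas~\ref{lem_dd} and~\ref{lem_ddc}; the only step I expect to require genuine care is the well-definedness of $\delta(\mA)$ --- one must make sure that the $QG$-cocycle attached to $\mA$ is an invariant of its $QG$-$C_0(X)$-isomorphism class and not merely of the underlying $C_0(X)$-algebra, so that $\delta(\mA)$ does not depend on the chosen local trivialization, and that ``structure group $QG$'' is consistently read with respect to the action (\ref{eq_QG_act}). This bookkeeping is the main (and only mild) obstacle.
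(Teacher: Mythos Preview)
Your proposal is correct and follows exactly the same route as the paper: define $\delta(\mA):=\delta[\efq]$ via Lemma~\ref{lem_ddc} and then invoke both directions of Lemma~\ref{lem_dd}. The paper's proof is in fact a two-line compression of what you wrote, and your added remarks on well-definedness (that $[\efq]$ depends only on the $QG$-$C_0(X)$-isomorphism class) simply make explicit what the paper leaves implicit.
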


\begin{proof}
Applying Lemma \ref{lem_ddc} we define $\delta(\mA) := \delta[\efq]$, where $\efq$ is the $QG$-cocycle associated with $\mA$ (of course, there is an abuse of the notation $\delta$ in the previous definition, but this should not create confusion). The theorem now follows applying Lemma \ref{lem_dd}.
\end{proof}

The class $\delta$ may be also interpreted as an obstruction to constructing covariant representations of a gauge \sC dynamical system over a continuous field of Hilbert spaces. Since this point goes beyond the purpose of the present work, we postpone a complete discussion to a forthcoming paper.

In the following lines we discuss the relation between the class $\delta$ and the Dixmier-Douady invariant. Let $\bH$ denote the standard separable Hilbert space, $\bU$ the unitary group of $\bH$ endowed with the norm topology, $\bT$ the torus acting on $\bH$ by scalar multiplication, $\bPU := \bU / \bT$ the projective unitary group, $\mK^r$ the \sC algebra of compact operators acting on the tensor power $\bH^r$, $r \in \bN$, and $(\bH^r,\bH^r) \supset \mK^r$ the \sC algebra of bounded operators. Moreover, let $\mO_\infty$ denote the Cuntz algebra; it is well-known that there is a continuous action
\begin{equation}
\label{eq_Uact}
\bU \to {\bf aut}\mO_\infty
\ ,
\end{equation}
defined as in (\ref{def_cag}), which restricts to the {\em circle action}
$\bT \to {\bf aut}\mO_\infty$.
The construction (\ref{def_gqg}) with $K = \bU$, $G = \bT$ yields $QG = \bPU$ and the action
\[
\bPU \to {\bf aut}\mO_\infty^0
\ \ , \ \
\gamma \mapsto \wa \gamma
\ .
\]
Now, $\mO_\infty^0$ can be constructed using a universal construction on $\mK$, as follows (see \cite{CP95}). Consider the inductive structure
\begin{equation}
\label{eq_i_l}
\ldots 
\ \stackrel{j_{r-1}}{\to} \ 
(\bH^r,\bH^r)
\ \stackrel{j_r}{\to} \ 
(\bH^{r+1},\bH^{r+1})
\ \stackrel{j_{r+1}}{\to} \ 
\ldots
\ \ , \ \
j_r(t) := t \otimes 1
\ ,
\end{equation}
where $1 \in (\bH,\bH)$ is the identity, and denote the associated \sC algebra by $\mB_\infty$. Then, $\mO_\infty^0$ is the \sC subalgebra of $\mB_\infty$ generated by the images of the $\mK^r \subset (\bH^r,\bH^r)$, $r \in \bN$. The $\bPU$-action on $\mO_\infty^0$ preserves the inductive structure: if $i_r : \mK^r \to \mO_\infty^0$, $r \in \bN$, are the natural inclusions, then
\begin{equation}
\label{eq_pu1}
\wa \gamma \circ i_r (t) \in i_r(\mK^r) 
\ \ , \ \ 
\gamma \in \bPU
\ , \ 
t \in i_r(\mK^r)
\ ,
\end{equation}
and in particular $\bPU$ acts on $i_1(\mK)$ as the usual adjoint action:
\begin{equation}
\label{eq_pu2}
\wa \gamma \circ i_1 (t) = i_1 \circ \gamma (t)
\ \ , \ \
t \in \mK
\ , \
\gamma \in \bPU
\ .
\end{equation}
Let us denote the category of $\mO_\infty^0$-bundles over $X$ with arrows $\bPU$-$C_0(X)$-isomorphisms by ${\bf bun}_\bPU(X,\mO_\infty^0)$. By the above results, each $\mO_\infty^0$-bundle $\mA_\infty$ with structure group $\bPU$ is determined by a $\bPU$-cocycle $\efq$, and the class
\begin{equation}
\label{eq_DD_O}
\delta (\mA_\infty) = \delta [\efq] \in H^2(X,\bT) \simeq H^3(X,\bZ)
\end{equation}
measures the obstruction to finding a gauge dynamical system with fibre (\ref{eq_Uact}) and fixed-point algebra $\mA_\infty$.
Now, we denote the category of $\mK$-bundles over $X$ with arrows $C_0(X)$-isomorphisms by ${\bf bun}(X,\mK)$; each $\mK$-bundle $\mA$ is determined by a $\bPU$-cocycle $\efq$, and its Dixmier-Douady invariant (\cite[Ch.10]{Dix}) is computed by (\ref{eq_dd_class}):
\begin{equation}
\label{eq_DD_K}
\delta_{DD}(\mA) = \delta (\efq) \ .
\end{equation}
%
%
%
%
\begin{prop}
\label{ex_dd}
For each locally compact, paracompact Hausdorff space $X$, there is an equivalence of categories
${\bf bun}(X,\mK) \to {\bf bun}_\bPU(X,\mO_\infty^0)$,
$\mA \mapsto \mA_\infty$, and
\begin{equation}
\label{eq_DD_O_K}
\delta_{DD}(\mA) = \delta(\mA_\infty)
\ \ , \ \
\mA \in {\bf bun}(X,\mK) \ .
\end{equation}
\end{prop}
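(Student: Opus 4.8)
The plan is to construct a functor $\Phi : {\bf bun}(X,\mK) \to {\bf bun}_\bPU(X,\mO_\infty^0)$ on the level of transition cocycles, verify it is a well-defined equivalence, and then observe that on cohomology classes it is the identity on $\bPU$-cocycles, so that the two obstruction classes $\delta_{DD}(\mA)$ and $\delta(\mA_\infty)$ are literally the same class by construction. First I would recall that a $\mK$-bundle $\mA$ is, up to $C_0(X)$-isomorphism, the algebra of sections of the fibre bundle with fibre $\mK$ and transition maps a $\bPU$-cocycle $\efq = \{ \gamma_{ij} \}$, using that ${\bf aut}\mK \simeq \bPU$ in the norm topology; this is exactly (\ref{eq_DD_K}). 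Dually, by the discussion preceding the Proposition, an $\mO_\infty^0$-bundle with structure group $\bPU$ is also classified by a $\bPU$-cocycle. So I would define $\Phi(\mA_\efq) := \mA_{\infty,\efq}$, the $\mO_\infty^0$-bundle built from the \emph{same} cocycle $\efq$ via the $\bPU$-action on $\mO_\infty^0$, and on arrows send a $C_0(X)$-isomorphism (locally implemented by maps into $\bPU$, i.e.\ a $\bPU$-$C_0(X)$-isomorphism of $\mK$-bundles) to the evident $\bPU$-$C_0(X)$-isomorphism of the associated $\mO_\infty^0$-bundles. Functoriality is then a formal check that composition of locally-$\bPU$-implemented isomorphisms is respected.

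The key structural input making $\Phi$ well-defined and an equivalence is that the $\bPU$-action on $\mO_\infty^0$ restricts on $i_1(\mK) \subset \mO_\infty^0$ to the standard adjoint $\bPU$-action on $\mK$, by (\ref{eq_pu2}), and that $i_1 : \mK \to \mO_\infty^0$ is $\bPU$-equivariant and functorial for these adjoint automorphisms. Concretely, $i_1$ induces, fibrewise and compatibly with the local charts (\ref{eq_lc_c}), a $C_0(X)$-monomorphism $\mA_\efq \hookrightarrow \mA_{\infty,\efq}$; conversely $\mO_\infty^0$ is generated by $i_1(\mK)$ together with the remaining $i_r(\mK^r)$, and since the $\bPU$-action preserves the inductive structure (\ref{eq_pu1}), the whole bundle $\mA_{\infty,\efq}$ is determined by the single cocycle $\efq$ with no extra gluing data. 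This gives that $\Phi$ is essentially surjective: every $\mO_\infty^0$-bundle with structure group $\bPU$ arises, up to $\bPU$-$C_0(X)$-isomorphism, from a $\bPU$-cocycle, hence from a $\mK$-bundle. For fully faithfulness I would argue that $\bPU$-$C_0(X)$-isomorphisms of the two bundle types are both classified by the same coboundary data $\{ v_i : X_i \to \bPU \}$ relating the cocycles, using the injectivity of $i_1$ to transport a $\bPU$-$C_0(X)$-isomorphism of $\mO_\infty^0$-bundles back to one of $\mK$-bundles.

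Once $\Phi$ is established, the identity (\ref{eq_DD_O_K}) is immediate: by (\ref{eq_DD_K}) we have $\delta_{DD}(\mA_\efq) = \delta(\efq)$ computed via (\ref{eq_dd_class}), i.e.\ the map $\delta : H^1(X,\bPU) \to H^2(X,\bT) \simeq H^3(X,\bZ)$ of Lemma~\ref{lem_ddc} applied with $N = \bU$, $G = \bT$; and by (\ref{eq_DD_O}) we have $\delta(\mA_{\infty,\efq}) = \delta(\efq)$ for the \emph{same} $\efq$ and the \emph{same} map $\delta$, since the relevant crossed module $\bT \to \bU$ is identical in both constructions. Hence $\delta_{DD}(\mA) = \delta(\Phi(\mA))$ for all $\mA \in {\bf bun}(X,\mK)$, which is (\ref{eq_DD_O_K}).

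The step I expect to be the main obstacle is verifying essential surjectivity and fully faithfulness carefully, i.e.\ that \emph{every} $\mO_\infty^0$-bundle with structure group $\bPU$ (not merely those manufactured from a $\mK$-bundle) and every $\bPU$-$C_0(X)$-isomorphism between two such bundles is captured by $\bPU$-cocycle data and coboundary data respectively — in other words, that the embedding $i_1$ together with the generation property of the $i_r(\mK^r)$ genuinely pins down the bundle and its isomorphisms with no loss of information. This amounts to checking that the passage $\mF \mapsto \wa\mF$ recalled before Lemma~\ref{lem_qg} and the equivalence between $K$-cocycles up to $H^1(X,K)$ and $K$-$C_0(X)$-isomorphism classes, applied with $K = \bPU$ acting on $\mK$ and on $\mO_\infty^0$, are compatible under $i_1$; the rest is routine bookkeeping with local charts and the cocycle relations.
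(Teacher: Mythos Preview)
Your proposal is correct and arrives at the same conclusion, but the route differs from the paper's in one respect worth noting. The paper defines the functor $\mA \mapsto \mA_\infty$ \emph{intrinsically}, without first choosing a cocycle: it forms the $C_0(X)$-tensor powers $\mA^r := \mA \otimes_X \cdots \otimes_X \mA$ sitting inside the multiplier tensor powers $M\mA^r$, passes to the inductive limit along $t \mapsto t \otimes 1$, and takes $\mA_\infty$ to be the \sC subalgebra generated by the images of the $\mA^r$; a $C_0(X)$-isomorphism $\beta : \mA \to \mA'$ then extends canonically to $\beta_\infty$. For essential surjectivity and fullness the paper proceeds exactly as you outline, using (\ref{eq_pu1}) and (\ref{eq_pu2}) to extract from any $\mO_\infty^0$-bundle with structure group $\bPU$ the sub-$\mK$-bundle $\mA^1$ (with the same $\bPU$-cocycle) generating it, and observing that $\bPU$-equivariance forces any arrow $\beta'$ in ${\bf bun}_\bPU(X,\mO_\infty^0)$ to satisfy $\beta'(\mA^r) = {\mA'}^r$, whence $\beta' = (\beta'|_{\mA^1})_\infty$. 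Your approach instead works throughout at the level of a chosen cocycle representative and coboundary data. Both are legitimate; the paper's version is canonical on objects (no local charts needed to \emph{define} $\mA_\infty$), while yours makes the identity (\ref{eq_DD_O_K}) a tautology from the outset. The ``main obstacle'' you flag is exactly what the paper resolves via the restriction $\beta' \mapsto \beta'|_{\mA^1}$.
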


\begin{proof}
Let $\mA$ be a $\mK$-bundle with associated $\bPU$-cocycle $\efq$. The multiplier algebra $M \mA$ of $\mA$ can be constructed as the $C_0(X)$-algebra of bounded sections of the bundle $\wa \mB \to X$ with fibre $(\bH,\bH)$ and transition maps defined by $\efq$. For each $r \in \bN$, we consider the $C_0(X)$-tensor products $M \mA^r := M \mA \otimes_X \ldots \otimes_X M \mA$, $\mA^r := \mA \otimes_X \ldots \otimes_X \mA$ and the obvious inclusions $\mA^r \subset M \mA^r$. We have the inductive limit structure
\[
\ldots 
\ \stackrel{j_{r-1}}{\to} \ 
M \mA^r 
\ \stackrel{j_r}{\to} \ 
M \mA^{r+1} 
\ \stackrel{j_{r+1}}{\to} \ 
\ldots
\ \ , \ \
j_r(t) := t \otimes 1
\ ,
\]
where $1 \in M \mA$ is the identity. The system $( M \mA^r , j_r )$ yields the inductive limit algebra $M \mA_\infty$ and we define $\mA_\infty$ as the \sC subalgebra of $M \mA_\infty$ generated by the images of the \sC algebras $\mA^r$, $r \in \bN$. 
If $\beta : \mA \to \mA'$ is a $C_0(X)$-isomorphism, then it naturally extends to $C_0(X)$-isomorphisms $\beta_r : \mA^r \to {\mA'}^r$, $r \in \bN$, and finally to a $C_0(X)$-isomorphism $\beta_\infty : \mA_\infty \to \mA'_\infty$.
On the converse, let $\mA_\infty$ be a $\mO_\infty^0$-bundle with structure group $\bPU$ and associated $\bPU$-cocycle $\efq$. Since the $\bPU$-action on $\mO_\infty^0$ preserves the inductive structure (\ref{eq_i_l}), and since the $\bPU$-action on $\mO_\infty^0$ restricts to the natural $\bPU$-action on $\mK \subset \mO_\infty^0$ (see (\ref{eq_pu1}) and (\ref{eq_pu2})), for each $r \in \bN$ there is a $\mK^r$-bundle $\mA^r \subset \mA_\infty$ with associated $\bPU$-cocycle $\efq$, with $\mA^1$ generating $\mA_\infty$ as above; thus our functor is surjective on the sets of objects. 
If $\beta' : \mA_\infty \to \mA'_\infty$ is an isomorphism in ${\bf bun}_\bPU(X,\mO_\infty^0)$ then by $\bPU$-equivariance we find $\beta' |_{\mA^r} = {\mA'}^r$ for each $r \in \bN$. Defining $\beta := \beta' |_{\mA^1}$ we easily find $\beta' = \beta_\infty$; thus our functor is surjective on the sets of arrows. 
Finally, (\ref{eq_DD_O_K}) follows by (\ref{eq_DD_O}) and (\ref{eq_DD_K}). 
\end{proof}

\section{Gauge-equivariant bundles, and a concrete duality.}
\label{sec_gd}

Let $X$ be a compact Hausdorff space. In the present section we give a duality theory in the setting of the category ${\bf vect}(X)$ of vector bundles over $X$, relating suitable subcategories of ${\bf vect}(X)$ with gauge equivariant vector bundles in the sense of \cite{NT04}.

Let $d \in \bN$ and $\pi :\mE \to X$ a vector bundle of rank $d$. We denote the Hilbert $C(X)$-bimodule of sections of $\mE$ by $\wE$, endowed with coinciding left and right $C(X)$-actions. For each $r \in \bN$, we denote the $r$-fold tensor power of $\mE$ in the sense of \cite[\S I.4]{Kar}, \cite[1.2]{Ati} by $\mE^r$ (for $r = 0$, we define $\mE^0 := \iota := X \times \bC$) and by $(\ers)$ the set of vector bundle morphisms from $\mE^r$ into $\mE^s$. 
The Serre-Swan equivalence implies that every $(\ers)$ is the 
$C(X)$-bimodule of sections of a vector bundle $\pi_{rs} : \mE^{rs} \to X$, having fibre $(\hrs) \equiv  \bM_{d^r,d^s}$ (\cite[Thm.5.9]{Kar}). In explicit terms, $\mE^{rs} \simeq \mE^s \otimes \mE_*^r$, where $\mE_*^r$ is the $r$-fold tensor power of the conjugate bundle and every $t \in (\ers)$ can be regarded as a continuous map
\[
t : X \to \mE^{rs} 
\ \ , \ \
\pi_{rs} \circ t = id_X 
\ .
\]
We denote the tensor category with objects $\mE^r$, $r \in \bN$, and arrows $(\ers)$ by $\wa \mE$. It is clear that $(\ii) = C(X)$. Moreover, the flip operator
\begin{equation}
\label{eq_flip}
\theta_\mE  \in ( \mE^2 , \mE^2 ) 
\ : \ 
\theta_\mE (x) \circ ( v \otimes v') := v' \otimes v
\ \ , \ \
v,v' \in \mE_x
\ \ , \ \
x \in X
\ ,
\end{equation}
defines a symmetry on $\wa \mE$. Thus, $( \wa \mE , \otimes , \iota , \theta_\mE  )$ is a symmetric tensor \sC category; we denote the associated pointed \sC dynamical system by $( \coe , \sigma_\mE , \theta_\mE )$.
\begin{prop}
\label{prop_we}
Let $d \in \bN$ and $\bH$ denote the standard rank $d$ Hilbert space. (1) For each compact Hausdorff space $X$ there is an isomorphism 
\[
Q : \sym ( X,\wa \bH ) \to H^1(X,\ud)
\ .
\]
(2) If $\mE \to X$ is a rank $d$ vector bundle, then the category $( \wa \mE , \otimes , \iota , \theta_\mE  )$ is a $\wa \bH$-bundle and all the elements of $\sym (X,\wa \bH)$ are of this type; (3) If $\efu$ is an $\ud$-cocycle associated with $\mE$ as a set of transition maps, then $Q[\wa \mE] = [\efu]$; (4) $\coe$ is the Cuntz-Pimsner algebra associated with $\wE$ and is an $\mO_d$-bundle with structure group $\ud$.
\end{prop}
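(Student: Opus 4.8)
The plan is to prove the four assertions in sequence, leaning on Theorem~\ref{thm_amen} and Theorem~\ref{thm_amen2} for the categorical bookkeeping and on the Serre--Swan machinery for the bundle statements. For (1), note that the standard Hilbert space $\bH$ of dimension $d$ is (trivially) the fibre of the category $\wa\bH$, whose ``gauge group'' is the trivial group; hence $NG=\ud$, $G=\{1\}$ and $QG=\ud$. Since the inclusion $\{1\}\subseteq\ud$ is covariant (every embedding of $\mO_{\{1\}}=\mO_d$ into $\mO_d$ is inner by \cite[Cor.3.3]{DR87}, or directly by Ex.~\ref{ex_T0}-type reasoning), Theorem~\ref{thm_amen2} applies and gives the bijection $Q:\sym(X,\wa\bH)\to H^1(X,\ud)$; it is an isomorphism of pointed sets. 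I would spell out that ${\bf aut}\wa\bH\simeq\ud$ here, which is just the statement that automorphisms of $(\wa\bH,\otimes,\iota,\theta)$ are implemented by unitaries on $\bH$.

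For (2), given a rank $d$ vector bundle $\mE\to X$, I would first observe that $(\wa\mE,\otimes,\iota,\theta_\mE)$ is locally trivial: over a trivializing open $U$ for $\mE$, a bundle chart $\mE|_U\simeq U\times\bH$ induces $C(U)$-isofunctors $(\wa\mE)_U\to U\wa\bH$ compatible on overlaps, and these are tensor and symmetry preserving because the flip $\theta_\mE$ is defined fibrewise and matches $\theta$ in the chart. Hence $\wa\mE$ is a $\wa\bH$-bundle and defines a class in $\sym(X,\wa\bH)$. Conversely, by Theorem~\ref{thm_amen}(2) (with $G=\{1\}$) any element of $\sym(X,\wa\bH)$ has fibre $\wa G=\wa\bH$, and its class in $H^1(X,\ud)$ under $Q$ corresponds to some $\ud$-cocycle $\efu$; the vector bundle $\mE_\efu\to X$ built from $\efu$ then satisfies $\wa{\mE_\efu}\simeq$ the given category, because the local charts and transition functions of $\wa{\mE_\efu}$ are exactly $\efu$ acting on tensor powers. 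This simultaneously proves (3): tracing the bijection $Q$ through the local charts, the cocycle attached to $\wa\mE$ is precisely the cocycle $\efu$ of transition maps of $\mE$ acting on $\bH$.

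For (4), I would recall that $\coe$ was defined as the DR-dynamical system of $(\wa\mE,\otimes,\iota,\theta_\mE)$, i.e. the $C^*$-completion of $\oplus_k\mO_\mE^k$ with $\mO_\mE^k=\lim_{\to r}((\mE^r,\mE^{r+k}),j_{r,r+k})$. The identification with the Cuntz--Pimsner algebra $\mO_{S\mE}$ of the Hilbert $C(X)$-bimodule $S\mE$ of sections of $\mE$ is the bundle analogue of the classical fact that $\mO_d$ is the Cuntz--Pimsner algebra of $\bH$; concretely one checks that $\mE^{k}\simeq (\iota,\mE^k)$ furnishes the degree-$k$ part and the relations among the generating partial isometries $\psi_i$ (coming from a local frame of $\mE$) are exactly the Cuntz--Pimsner relations for $S\mE$. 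Local triviality with structure group $\ud$ then follows from Theorem~\ref{thm_oro} (if $\wa\mE$ is locally trivial as a bundle of $C^*$-categories, so is $\oro$ as a $C^*$-algebra bundle) together with part (2), and the fibre is $\mO_d$ since the fibre of $\wa\mE$ over $x$ is $\wa\bH$ whose DR-algebra is $\mO_d$.

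The main obstacle I expect is (4): making the identification $\coe\simeq\mO_{S\mE}$ precise requires matching two a priori different universal constructions (the direct-limit/grading construction of $\oro$ versus the Cuntz--Pimsner construction from the bimodule), and one must be careful that the Hermitian structure on $\mE$ gives $S\mE$ a full, nondegenerate bimodule structure so that its Cuntz--Pimsner algebra has the expected grading and fibres. The local triviality claims in (2) and (4) are routine once one commits to the trivializing cover of $\mE$, so I would keep those brief; the genuine content is the Serre--Swan translation ($(\mE^r,\mE^s)$ is the section module of a vector bundle with fibre $(\hrs)$) and the verification that the symmetry $\theta_\mE$ is the fibrewise flip, which pins down the ``symmetric'' part of the structure and hence the comparison with $\sym(X,\wa\bH)$ rather than just $H^1(X,\ud)$ of non-symmetric data.
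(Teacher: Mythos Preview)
Your proposal is correct and follows essentially the same route as the paper. For (1) you and the paper both specialize Theorem~\ref{thm_amen2} to $G=\{1\}$, $NG=QG=\ud$; for (2) the forward direction is identical, and for (4) the paper simply cites \cite[Prop.~4.1, Prop.~4.2]{Vas} where you sketch the argument directly.

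The only noteworthy difference is in the converse of (2): the paper extracts the vector bundle intrinsically from the category by observing that the Hilbert $C(X)$-module $(\iota,\rho)$ is the section module of a rank~$d$ vector bundle $\mE$ (Serre--Swan), and then extends the resulting isomorphism $\wE\simeq(\iota,\rho)$ to all $(\ers)\simeq(\rhors)$. You instead pull back the $\ud$-cocycle via $Q$ and build $\mE_\efu$. Both are valid; the paper's version is slightly more canonical (it does not presuppose part~(3)), while yours packages (2) and (3) together. One small point: your justification of covariance of $\{1\}\subseteq\ud$ via \cite[Cor.~3.3]{DR87} is not quite the right citation (that result concerns the stabilizer, not surjectivity of $\chi$); it is cleaner to note that $\{1\}\subseteq\sud$ and invoke the general fact that every such inclusion is covariant.
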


\begin{proof}
(1) We apply Thm.\ref{thm_amen2} to the case $G = \{ 1 \}$, so that $NG = QG = \ud$. 
(2) Let $\mE \to X$ be a vector bundle; we consider a local chart $\pi_U : \mE |_U \to U \times H$ and note that, by functoriality, for each $r,s \in \bN$ there are local charts $\pi_U^{rs} : \mE^{rs} |_U \to X \times (\hrs)$. This yields the desired local chart $\wa \pi_U : \wa \mE \to U \wa \bH$.
Let now $( \wa \rho , \otimes , \iota , \eps )$ be a $\wa \bH$-bundle; to prove that $\wa \rho \simeq \wa \mE$ for some vector bundle $\mE$ we note that the Hilbert $C(X)$-bimodule $( \iota , \rho )$ defines a locally trivial continuous field of Hilbert spaces with fibre $\bH$; we denote the vector bundle associated with $( \iota , \rho )$ by $\mE$, and applying the Serre-Swan equivalence we obtain an isomorphism $\beta : \wE \simeq ( \iota , \mE ) \to (\iota , \rho)$, which extends to the desired isomorphisms $\beta^{rs} : (\ers) \to (\rhors)$, $r,s \in \bN$. 
(3) We pick an $\ud$-cocycle $\efu'$ with class $Q[\wa \mE]$. By definition of $Q$ we have that $\efu'$ yields transition maps for the vector bundles $\mE^{rs}$, $r,s \in \bN$, by means of the action $\wa u (t) :=$ $u_s \circ t \circ u_r^*$, $u \in \ud$, $t \in (\hrs)$ (compare with (\ref{eq_ga_rs})). In particular, for $r=0$, $s=1$, we conclude that $\efu'$ defines, up to cocycle equivalence, a set of transition maps for $\mE$ and thus $[\efu'] = Q[\wa \mE] = [\efu]$.
(4) It suffices to recall \cite[Prop.4.1, Prop.4.2]{Vas}.
\end{proof}

\begin{rem}
\label{rem_vb_lc}
To be concise, we denote the totally antisymmetric projections defined as in (\ref{def_ap}) by $P_n :=$ $P_{\mE , \theta_\mE , n} \in$ $( \mE^n , \mE^n )$, $n \in \bN$. By definition of the totally antisymmetric line bundle $\wedge^d \mE := P_d \mE^d$ we have that $\mE$ is a twisted special object, with 'categorical Chern class' (\ref{def_cc_tso}) coinciding with the first Chern class $c_1(\mE)$. If $c_1(\mE) = 0$ then $\mE$ is a special object and the conjugate bundle $\mE_*$ appears as the object associated with the projection $P_{d-1} \in ( \mE^{d-1} , \mE^{d-1} )$ (see \cite[Lemma 3.6]{DR89}). Clearly, the existence of the conjugate bundle does not depend on the vanishing of $c_1(\mE)$, anyway in general it is false that $\mE_* \simeq P_{d-1} \mE^{d-1}$.
\end{rem}

Let $\wa \rho$ be a tensor $C(X)$-subcategory of $( \wa \mE , \otimes , \iota )$; we denote the spaces of arrows of $\wa \rho$ by $(\ers)_\rho$, $r,s \in \bN$. For every $r,s \in \bN$, we define the set $\mE^{rs}_\rho :=$ $\left\{  t_x  \in \mE^{rs} : \right.$ $x \in X ,$ $\left. t \in (\ers)_\rho \right\}$ and denote the restriction of $\pi_{rs}$ on $\mE^{rs}_\rho$ by $\pi_{rs}^\rho$. In this way, we obtain Banach bundles
\begin{equation}
\label{def_ters}
\pi_{rs}^\rho : \mE^{rs}_\rho \to X \ .
\end{equation}
Let $t \in (\ers)$. If $t \in (\ers)_\rho$, then by definition  $t_x \in \mE^{rs}_\rho$ for every $x \in X$. On the converse, suppose that $t_x \in \mE^{rs}_\rho$, $x \in X$; then for every $x \in X$ there is $t' \in (\ers)_\rho$ such that $t_x = t'_x$. By continuity, for every $\eps > 0$ there is a neighbourhood $U_\eps \ni x$ with $\sup_{y \in U_\eps} \left\| t_y - t'_y \right\| < \eps$. Thus, \cite[10.1.2 (iv)]{Dix} implies that $t \in (\ers)_\rho$. We conclude that 
\begin{equation}
\label{eq_t_ersrho}
t \in (\ers)_\rho
\ \Leftrightarrow \
t_x \in \mE^{rs}_\rho
\ , \ 
\forall x \in X \ .
\end{equation}
Let $p : \mG \to X$ be a group bundle with fibres compact groups $G_x := p^{-1}(x)$, $x \in X$. According to \cite{NT04}, a {\em gauge action} on $\mE$ is given by a continuous map 
\[
\alpha : \mG \times_X \mE \to \mE \ ,
\]
such that each restriction $\alpha_x : G_x \times \mE_x \to \mE_x$, $x \in X$, is a unitary representation on the Hilbert space $\mE_x$; to economize on notation, we define
\[
G_{\alpha,x} := \alpha_x (G_x)
\ \ , \ \
u_{\alpha,x} := \alpha_x (u)
\ \ , \ \
u \in G_x
\ .
\]
In this way, $\mE$ is a $\mG${\em-equivariant vector bundle} in the sense of \cite[\S 1]{NT04}, with trivial action on $X$. Moreover, every $\pi_{rs} :  \mE^{rs} \to X$ is a $\mG$-vector bundle, with action
\[
\alpha^{rs} : \mG \times_X \mE^{rs} \to \mE^{rs} 
\ , \ 
(u,v) \mapsto \alpha^{rs} (u,v) := \wa u_{\alpha,x} (v)
\ , \ 
x := p (u) = \pi_{rs}(v) \in X  \  ,
\]
where $\wa u_{\alpha,x} (v)$ is defined as in (\ref{def_cag}). We denote the category with objects $\mE^r$, $r \in \bN$, and arrows
\begin{equation}
\label{def_dual}
(\ers)_\alpha
:= 
\left\{ 
t \in ( \ers ) 
\ : \ 
\alpha^{rs} ( u, t(x) ) = t(x) 
\ , \ 
u \in \mG , x := p(u)
\right\}
\end{equation}
by $\wa \alpha$.
Clearly, $( \wa \alpha , \otimes , \iota )$ is a tensor \sC category with $(\ii) = C(X)$ and fibres $\wa G_{\alpha,x}$, $x \in X$, defined as in (\ref{eq_ga_rs}). Since $\theta_\mE (x) = \theta$, $x \in X$, we conclude that $\theta_\mE \in (\mE^2,\mE^2)_\alpha$, thus there is an inclusion functor
\[
E : 
( \wa \alpha , \otimes , \iota , \theta_\mE )
\to
( \wa \mE , \otimes , \iota , \theta_\mE )
\ .
\]
Let us consider the bundle $\mcUE \to X$ of unitary automorphisms of $\mE$ (see \cite[I.4.8]{Kar}). It is well known that $\mcUE$ has fibre the unitary group $\ud$; if $\{ u_{ij} \}$ is the $\ud$-cocycle associated with $\mE$, then $\mcUE$ has associated ${\bf aut} \ud$-cocycle
\[
\gamma_{ij,x} (u) := u_{ij,x} \cdot u \cdot u_{ij,x}^*
\ \ , \ \
x \in X_{ij}
\ , \
u \in \ud
\ .
\]
Note that $\mcUE$ is compact as a topological space. In the same way the bundle $\mcSUE \to X$ of special unitary automorphisms of $\mE$ is defined: it has fibre $\sud$ and the same transition maps as $\mcUE$. Of course, there is an inclusion $\mcSUE \subset \mcUE$.

Now let be $\mG \to X$ be a closed subbundle of $\mcUE$, not necessarily locally trivial. Then there is an obvious gauge action $\alpha : \mG \times_X \mE \to \mE$. In order to emphasize the picture of $\mG$ as a subbundle of $\mcUE$, we use the notations 
\[
\wa \mG := \wa \alpha
\ \ , \ \
(\ers)_\mG := (\ers)_\alpha
\ ,
\]
and call $\wa \mG$ the {\em dual} of $\mG$. Clearly, each $(\ers)_\mG$ is the module of sections of a Banach bundle
\[
\pi_{rs}^\mG : \mE^{rs}_\mG \to X
\ \ , \ \
r,s \in \bN
\ .
\]
We define $( \mO_\mG , \sigma_\mG , \theta_\mE )$ as the pointed \sC dynamical system associated with $( \wa \mG , \otimes , \iota , \theta_\mE )$. Clearly, there is a canonical monomorphism
\[
E_* : ( \mO_\mG , \sigma_\mG , \theta_\mE ) \to ( \coe , \sigma_\mE , \theta_\mE ) \ .
\]
Actions on the vector bundle $\mE \to X$ by (generally noncompact) groups $G$ of unitary automorphisms have been considered in \cite[\S 4]{Vas05}. This approach has the disadvantage to associate the same dual to very different groups (see \cite[Ex.4.2]{Vas05}). According to \cite[Def.4.7]{Vas05}, we can associate a group bundle $\mG \subseteq \mcUE$ to $G$, in such a way that the map $\{ \mG \mapsto \wa \mG \}$ is one-to-one (\cite[Prop.4.8]{Vas05}). For this reason in the present paper we passed to consider the notion of gauge action.

The following result is a different version of \cite[Prop.4.8]{Vas05}; since the proof is essentially the same, it is omitted.
\begin{prop}
\label{rem_dual}
Let $\mE \to X$ be a vector bundle. The map $\{ \mG \mapsto \wa \mG \}$ defines a one-to-one correspondence between the set of closed subbundles of $\mcSUE$ and the set of symmetric tensor \sC subcategories $\wa \rho$ of $\wa \mE$ such that $( \iota , \wedge^d \mE ) \subseteq (\iota , \mE^d)_\rho$.
\end{prop}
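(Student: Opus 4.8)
The strategy is to mimic the fibrewise Doplicher–Roberts/Tannakian reconstruction, globalized over $X$, using the continuity criterion \eqref{eq_t_ersrho}. We must produce two mutually inverse assignments: $\mG \mapsto \wa \mG$, which is already defined, and an inverse $\wa \rho \mapsto \mG_\rho$ recovering a closed subbundle of $\mcSUE$ from a symmetric tensor \sC subcategory $\wa \rho \subseteq \wa \mE$ satisfying $(\iota,\wedge^d\mE) \subseteq (\iota,\mE^d)_\rho$. First I would define, for each $x \in X$, the fibre category $\wa\rho_x \subseteq \wa\bH$ obtained by evaluating arrows at $x$ (using the Banach bundles \eqref{def_ters}); the hypothesis $(\iota,\wedge^d\mE)\subseteq(\iota,\mE^d)_\rho$ guarantees that the generating antisymmetric isometry $S$ lies in $(\iota,\mE^d)_\rho$, so that $\rho_x$ is a special object of dimension $d$ in $\wa\bH_x$. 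By the fibrewise Doplicher–Roberts duality (the content of \cite[\S 4]{DR89} and \cite[Lemma 6.7]{DR89}, already invoked in this paper for the covariance statement and in Thm.\ref{thm_amen}), there is a unique compact group $G_{\rho,x}\subseteq \sud$ with $\wa\rho_x = \wa G_{\rho,x}$, namely $G_{\rho,x} = \{ u \in \sud : \wa u_r(t) = t,\ t \in (\mE^r,\mE^r)_{\rho,x},\ r \in \bN\}$ — the stabilizer in $\sud$ of all the fibre intertwiner spaces. Set $\mG_\rho := \bigcup_{x} G_{\rho,x} \subseteq \mcSUE$.

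Next I would verify that $\mG_\rho$ is a closed subset of $\mcSUE$. Closedness is the crux: an element of $\mcSUE$ over $x$ is in $\mG_\rho$ iff it fixes every $t_x$ with $t \in (\ers)_\rho$; since each such fixing condition $\{ v \in \mcSUE : \wa v(t_{p(v)}) = t_{p(v)} \}$ is closed in $\mcSUE$ (by joint continuity of the action $\alpha^{rs}$ and continuity of the sections $t$), $\mG_\rho$ is an intersection of closed sets, hence closed. No local triviality of $\mG_\rho$ is required, which is exactly why the statement only claims ``closed subbundle.'' Then the gauge action $\alpha$ of $\mG_\rho$ on $\mE$ is the restriction of the tautological action of $\mcSUE$, and it is manifestly continuous.

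It then remains to check the two compositions are the identity. For $\mG \mapsto \wa\mG \mapsto \mG_{\wa\mG}$: by construction $\mG_{\wa\mG,x}$ is the stabilizer in $\sud$ of all fibre intertwiner spaces $(\ers)_{\mG,x}$; fibrewise Doplicher–Roberts duality (the reconstruction theorem: a compact $G\subseteq \sud$ is recovered as the group of unitaries fixing its intertwiner spaces, \cite[Thm.4.17]{DR89}) gives $\mG_{\wa\mG,x} = G_{\alpha,x}$, and hence $\mG_{\wa\mG} = \mG$ as subsets of $\mcSUE$; here one uses \eqref{eq_t_ersrho} to see that evaluating $(\ers)_\mG$ at $x$ really gives $(\hrs)_{G_{\alpha,x}}$ and nothing larger. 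For $\wa\rho \mapsto \mG_\rho \mapsto \wa{\mG_\rho}$: we must show $(\ers)_\rho = (\ers)_{\mG_\rho}$ for all $r,s$. One inclusion is immediate since each element of $\mG_\rho$ fixes arrows of $\wa\rho$. For the reverse, fix $t \in (\ers)_{\mG_\rho}$; then $t_x$ is $G_{\rho,x}$-invariant for every $x$, i.e. $t_x \in (\hrs)_{G_{\rho,x}} = \mE^{rs}_\rho$ (the fibrewise duality identity $\wa\rho_x = \wa G_{\rho,x}$, which crucially uses amenability/specialness of $\rho_x$), and then \eqref{eq_t_ersrho} forces $t \in (\ers)_\rho$. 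The symmetry is preserved on both sides because $\theta_\mE$ is a fibrewise constant equal to the flip $\theta$, which lies in every $(\mE^2,\mE^2)_G$. The main obstacle is organizing the fibrewise reconstruction uniformly in $x$ — in particular, confirming that $(\ers)_\rho$ evaluated fibrewise is exactly $(\hrs)_{G_{\rho,x}}$, for which one leans on the specialness hypothesis to invoke the Doplicher–Roberts duality fibre by fibre, and on \eqref{eq_t_ersrho} to pass back from fibrewise data to genuine continuous arrows.
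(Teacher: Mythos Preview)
The paper omits the proof entirely, referring to \cite[Prop.4.8]{Vas05}; your fibrewise Doplicher--Roberts reconstruction combined with the section criterion \eqref{eq_t_ersrho} is the natural strategy and is almost certainly what that reference does.

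There is, however, a genuine gap in your treatment of the direction $\mG \mapsto \wa\mG \mapsto \mG_{\wa\mG}$. To conclude $\mG_{\wa\mG,x}=G_x$ via fibrewise DR duality you must know that the evaluation $\{t_x : t\in(\ers)_\mG\}$ equals $(\hrs)_{G_x}$, and the nontrivial inclusion is $\supseteq$: every $G_x$-invariant element must extend to a continuous $\mG$-invariant section. You invoke \eqref{eq_t_ersrho} for this, but that equivalence only says that a section lies in $(\ers)_\rho$ iff all its values lie in $\mE^{rs}_\rho$, where $\mE^{rs}_\rho$ is \emph{defined} as the set of values of sections; it tells you nothing about what those fibres actually are. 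If this extension step failed one could have $\mG\subsetneq\mG_{\wa\mG}$ with the same dual, and injectivity would break.

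The missing argument uses closedness of $\mG$ in an essential way. Closedness of $\mG\subseteq\mcSUE$ gives upper semicontinuity of $x\mapsto G_x$ (any limit of $g_n\in G_{x_n}$ with $x_n\to x$ lies in $G_x$), and a short compactness argument then shows that $x\mapsto (\hrs)_{G_x}$ is lower semicontinuous: if $v\in(\hrs)_{G_{x_0}}$ but $d(v,(\hrs)_{G_{x_n}})\geq\varepsilon$ along a sequence $x_n\to x_0$, one finds $g_n\in G_{x_n}$ with $\|\wa g_n(v)-v\|$ bounded below, and a subsequential limit $g\in G_{x_0}$ contradicts invariance of $v$. Lower semicontinuity of this finite-dimensional field of subspaces then yields the required continuous extension (e.g.\ by Michael's selection theorem, or by averaging a local extension against a cutoff). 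Once this is in place your argument goes through; the use of \eqref{eq_t_ersrho} is correct and sufficient in the other direction $\wa\rho\mapsto\mG_\rho\mapsto\wa{\mG_\rho}$, exactly as you describe.
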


%
%

Let $G \subseteq \ud$. A {\em $\wa G$-bundle in} $\wa \mE$ is a $\wa G$-bundle $\wa \rho$ endowed with an inclusion
\[
( \wa \rho , \otimes , \iota , \theta_\mE )
\ \subseteq \
( \wa \mE , \otimes , \iota , \theta_\mE )
\ .
\]
Let us denote the inclusion map by $i : NG \to \ud$, and the quotient projection by $p : NG \to QG$; by functoriality of $H^1(X,\cdot \ )$, there are maps
\begin{equation}
\label{eq_p*}
i_* : H^1(X,NG) \to H^1(X,\ud)
\ \ , \ \
p_* : H^1(X,NG) \to H^1(X,QG)
\ .
\end{equation}
Moreover, by (\ref{eq_gamma}) each $NG$-cocycle $\efn = \{ u_{ij} \}$ defines an ${\bf aut}G$-cocycle $\{ \wa u_{ij} \}$ with class $\gamma_*[\efn]$.

\begin{thm}
\label{thm1_str_gr}
Let $G \subseteq \ud$ be a compact group. For each compact Hausdorff space $X$ and $NG$-cocycle $\efn = \{ u_{ij} \}$, there are a vector bundle $\mE \to X$ with $\ud$-cocycle $\{ i \circ u_{ij} \}$ and a fibre $G$-bundle $\mG \subseteq \mcUE$ with transition maps $\{ \wa u_{ij} \}$. The category $( \wa \mG , \otimes , \iota , \theta_\mE )$ is a $\wa G$-bundle with associated cohomology class $p_*[\efn] \in H^1(X,QG)$. Moreover, there is a gauge action
\[
\alpha : \mG \times_X \wa \mO_\mE \to \wa \mO_\mE
\]
with fibre $( \mO_d , G )$ and fixed-point algebra $\mO_\mG$.
\end{thm}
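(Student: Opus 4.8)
The plan is to read off $\mE$ and $\mG$ directly from the $NG$-cocycle $\efn=\{u_{ij}\}$, to recognise $\wa\mG$ as a $\wa G$-bundle by means of the results of \S\ref{sec_sp_cu}, and then to produce the gauge action on $\wa\mO_\mE$ as an instance of Lemma \ref{lem_dd}. First I would construct the vector bundle: composing with the inclusion $i:NG\hra\ud$ turns $\efn$ into a $\ud$-cocycle $\{i\circ u_{ij}\}$, and I let $\mE\to X$ be the associated rank $d$ vector bundle, so that by Prop.\ref{prop_we}(4) the algebra $\mO_\mE=\coe$ is an $\mO_d$-bundle with structure group $\ud$. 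By functoriality of $\mE\mapsto\mE^{rs}$ and of $\mE\mapsto\coe$, together with their compatibility with local charts, each $\mE^{rs}$ acquires transition maps $x\mapsto\{t\mapsto(u_{ij,x})_s\,t\,(u_{ij,x})_r^*\}$ and $\mO_\mE$ acquires transition maps $\{\wa u_{ij}\}\subseteq{\bf aut}\mO_d$, where $\wa u_{ij,x}$ is the automorphism of $\mO_d$ attached to $u_{ij,x}\in\ud$ by (\ref{def_cag}).

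Next I would build $\mG$. Since $u_{ij}$ takes values in the normaliser $NG$, conjugation by $u_{ij,x}$ preserves $G$, so the maps $g\mapsto u_{ij,x}\,g\,u_{ij,x}^*$ form an ${\bf aut}G$-cocycle --- no coboundary term arises, as $\efn$ is itself a cocycle --- and define a fibre $G$-bundle $\mG\to X$ which, by compatibility of the conjugation actions, embeds as a closed subbundle of $\mcUE$ (whose ${\bf aut}\ud$-cocycle is $x\mapsto\{v\mapsto u_{ij,x}vu_{ij,x}^*\}$). Restricting the $\mcUE$-action gives a gauge action $\alpha:\mG\times_X\mE\to\mE$, hence the category $\wa\mG$ and the pointed \sC dynamical system $(\mO_\mG,\sigma_\mG,\theta_\mE)$. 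Over each $X_i$ the charts of $\mE$ and $\mG$ trivialise $\alpha$ as the defining action of $G$ on $\bH$, so $\wa\mG|_{X_i}\simeq X_i\wa G$ and $\wa\mG$ is a $\wa G$-bundle; on the arrow spaces $(\hrs)_G$ its transition functions are $t\mapsto(u_{ij,s})\,t\,(u_{ij,r})^*$, i.e. the image under Thm.\ref{thm_oro} of $\wa u_{ij}|_{\mO_G}$, which by Rem.\ref{rem_amen2} equals $\wa y_{ij}$ with $y_{ij}:=p(u_{ij})$. Under the identification ${\bf aut}\wa G\simeq QG$ of Thm.\ref{thm_amen2} these transition functions are the $QG$-cocycle $\{y_{ij}\}$, whence $Q[\wa\mG]=p_*[\efn]$.

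For the gauge action on $\wa\mO_\mE$ I would apply Lemma \ref{lem_dd} with $K=\ud$, $\mF_\bullet=\mO_d$ carrying the $G$-action (\ref{def_cag}), $\mA_\bullet=\mO_G$ and $\efq:=p_*[\efn]$: condition (2) of that Lemma holds, witnessed by $\efn$ itself, so it produces a gauge \sC dynamical system $(\mF,\mG',\alpha')$ with fibre $(\mO_d,G)$ and structure group $NG$ whose fixed-point algebra is $QG$-$C_0(X)$-isomorphic to the $\mO_G$-bundle $\mA_\efq$ attached to $\efq$. Reading off the proof of Lemma \ref{lem_dd}, $\mF$ has cocycle $\{\wa u_{ij}\}$ and $\mG'$ has transition maps $\{g\mapsto u_{ij,x}gu_{ij,x}^*\}$ (the ${\bf aut}G$-cocycle $\gamma_*[\efn]$), so $\mF\simeq\mO_\mE$ by the first paragraph, $\mG'\simeq\mG$ by the second, and $\mA_\efq=\mO_\mG$ because, by Lemma \ref{lem_qg} together with the second paragraph, both are the locally trivial pointed \sC dynamical system with fibre $(\mO_G,\sigma_G,\theta)$ associated with the $QG$-cocycle $\{y_{ij}\}$. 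Transporting $\alpha'$ along these isomorphisms yields the desired $\alpha:\mG\times_X\wa\mO_\mE\to\wa\mO_\mE$ with fibre $(\mO_d,G)$ and fixed-point algebra $\mO_\mG$.

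I expect this last identification to be the main obstacle: one must reconcile the two descriptions of $\mO_\mG$ --- as the \sC algebra generated by the $\mG$-invariant morphisms $(\ers)_\mG$ and as the fixed-point algebra of the transported gauge action --- and pin down the isomorphisms $\mF\simeq\mO_\mE$, $\mG'\simeq\mG$ precisely enough that the gauge action really lands on $\wa\mO_\mE$ and restricts to the correct action on the fixed-point algebra. Both algebras are locally $X_i\mO_G$ glued by $\{\wa y_{ij}\}$, so they coincide, but that is where the bookkeeping is heaviest; the remaining steps are routine unwinding of the functoriality of the constructions in \S\ref{sec_sp_cu} and \S\ref{sec_bu_co} and of Prop.\ref{prop_we}, Thm.\ref{thm_amen2}, Rem.\ref{rem_amen2}.
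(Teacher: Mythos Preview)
Your approach is correct and parallels the paper's, but diverges at the key identification of the fixed-point algebra. Both you and the paper invoke Lemma~\ref{lem_dd} with $\mF_\bullet = \mO_d$, $\mA_\bullet = \mO_G$ to produce the gauge action $\alpha$ on $\mO_\mE$ with $\mO_\mE^\alpha$ being $QG$-$C_0(X)$-isomorphic to $\mA_\efq$. The paper then establishes the \emph{equality} $\mO_\mE^\alpha = \mO_\mG$ as subalgebras of $\mO_\mE$ by an averaging argument: it uses the Haar functional $\varphi : C(\mG) \to C(X)$ to build an invariant mean $m : \mO_\mE \to \mO_\mE^\alpha$ (as in \cite[\S 4]{Vas07}), observes that $m((\ers)) = (\ers)_\mG \subset \mO_\mG$, and concludes that any $\alpha$-fixed $t = m(t)$ lies in the closure of $\mathrm{span}\bigcup_{r,s}(\ers)_\mG$, hence in $\mO_\mG$. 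The cocycle identification for $\wa\mG$ then \emph{follows} from $\mO_\mG = \mO_\mE^\alpha \simeq \mA_\efq$. Your route reverses the logic: you first compute the cocycle of $\wa\mG$ directly via local charts in your second paragraph, and then argue that $\mO_\mE^\alpha$ and $\mO_\mG$ coincide because both sit inside $\mO_\mE$ locally as $C_0(X_i) \otimes \mO_G$. This is equally valid but, as you correctly anticipate, heavier on local-chart bookkeeping; the invariant-mean argument is a cleaner device to close exactly the gap you flag.

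One minor point: the theorem does not assume $G \subseteq \ud$ covariant, so your appeals to Thm.~\ref{thm_amen2}, Lemma~\ref{lem_qg}, Rem.~\ref{rem_amen2}, and the notation $Q[\wa\mG]$ are not quite licit. Substantively you only need the injective map (\ref{eq_qg}) and the construction of $\mO_G$-bundles from $QG$-cocycles via (\ref{eq_QG_act}), neither of which requires covariance; the paper works with (\ref{eq_qg}) directly rather than through $Q$.
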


\begin{proof}
Clearly, there are $\mE$ and $\mG$ defined as above. Since by construction $\mG \subseteq \mcUE$, the action $\alpha$ is defined, together with the tensor \sC category $( \wa \mG , \otimes , \iota , \theta_\mE )$ and the pointed \sC dynamical system $( \mO_\mG , \sigma_\mG , \theta_\mE )$.
By (\ref{eq_qg}) we can regard $QG$ as a subgroup of ${\bf aut}\wa G$, so the $QG$-cocycle $\efq := \{ p \circ u_{ij} \}$ defines a symmetric tensor \sC category $( \wa \rho_\efq , \otimes , \iota , \eps_\efq )$ and a pointed \sC dynamical system $( \mO_\efq , \rho_\efq , \eps_\efq )$.
To prove that $\wa \mG$ has associated cocycle $\efq$, it suffices to give a $QG$-$C(X)$-isomorphism $\mO_\efq \simeq \mO_\mG$.
To this end, we note that Lemma \ref{lem_dd} implies that $\mO_\efq$ is $QG$-$C(X)$-isomorphic to the fixed-point algebra $\mO_\mE^\alpha$; thus, in order to get the desired isomorphism, it suffices to prove that $\mO_\mG = \mO_\mE^\alpha$. Now, it is clear that $\mO_\mG \subseteq \mO_\mE^\alpha$. To prove the opposite inclusion, we consider the Haar functional $\varphi : C(\mG) \to C(X)$ and the induced invariant mean $m : \coe \to \mO_\mE^\alpha$ in the sense of \cite[\S 4]{Vas07}. By definition of $\alpha$ we have $m((\ers)) = (\ers)_\mG \subset \mO_\mG$, $r,s \in \bN$, so that if $t \in \mO_\mE^\alpha$ is a norm limit of the type $t = \lim_n t_n$, $t_n \in {\mathrm{span}} \cup_{rs} (\ers)$, then $t = m(t) = \sum_n m(t_n)$, with $t_n \in (\ers)_\mG$. Thus, $\mO_\mE^\alpha = \mO_\mG$ and this completes the proof.
\end{proof}

In the following theorem we characterize the $\wa G$-bundles in $\wa \mE$ that arise as above.
\begin{thm}
\label{thm_str_gr}
Let $G \subseteq \ud$ be covariant, $\mE \to X$ a vector bundle with $\ud$-cocycle $\efu$ and $\wa \rho$ a $\wa G$-bundle in $\wa \mE$ with $QG$-cocycle $\efq$ (in the sense of Lemma \ref{lem_qg}). Then the structure group of $\mE$ can be reduced to $NG$, i.e. there is an $NG$-cocycle $\efn$ such that
$[\efu] = i_*[\efn]$.
Moreover $[\efq] = p_*[\efn]$ and $\wa \rho = \wa \mG$, where $\mG \subseteq \mcUE$ is a fibre $G$-bundle with class $\gamma_*[\efn] \in H^1(X,{\bf aut}G)$.
\end{thm}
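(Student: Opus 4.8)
The strategy is to straighten the inclusion $\wa\rho\subseteq\wa\mE$ over the members of a suitable cover, using the covariance of $G\subseteq\ud$ through \lemref{cor_dual_od}, so that the transition maps of $\mE$ are forced to take values in $NG$; once this is done all the remaining assertions are obtained by reading off cocycles in the straightened charts, essentially inverting the construction of \thmref{thm1_str_gr}. \emph{Local straightening.} Choose a good finite cover $\{X_i\}$ of $X$ that simultaneously trivialises $\mE$, by vector bundle charts $\phi_i:\mE|_{X_i}\to X_i\times\bH$ whose transition maps form the $\ud$-cocycle $\efu=\{u_{ij}\}$, and $\wa\rho$ as a $\wa G$-bundle, by symmetric tensor $C(X_i)$-isofunctors $\alpha_i:\wa\rho|_{X_i}\to X_i\wa G$; by \thmref{thm_amen2} the transition functors $\alpha_i\circ\alpha_j^{-1}$ are given by a $QG$-cocycle, which we may take to be $\efq$. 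Each $\phi_i$ induces functorially a symmetric tensor $C(X_i)$-chart $\wa\pi_i:\wa\mE|_{X_i}\to X_i\wa\bH$ (\propref{prop_we}(2)), carrying $\theta_\mE$ to $\theta$. Writing $j:\wa\rho\hookrightarrow\wa\mE$ for the inclusion, the composite $\beta_i:=\wa\pi_i\circ j\circ\alpha_i^{-1}:X_i\wa G\to X_i\wa\bH$ is a $C(X_i)$-monofunctor preserving tensor product and symmetry; its value $\beta_{i,x}$ at $x\in X_i$ therefore lies in ${\bf emb}\mO_G$, and since $\beta_i$ carries sections to sections the map $\beta_i:X_i\to{\bf emb}\mO_G$ is continuous. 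Here the covariance hypothesis enters: by \lemref{cor_dual_od}, after refining $\{X_i\}$ there are continuous maps $w_i:X_i\to\ud$ with $\wa w_{i,x}|_{\mO_G}=\beta_{i,x}$, equivalently $\wa w_{i,x}|_{\wa G}=\beta_{i,x}$ as functors $\wa G\to\wa\bH$ (\thmref{thm_oro}).

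\emph{Reduction to $NG$, and $[\efq]=p_*[\efn]$.} Replace $\phi_i$ by $\phi_i':=(\mathrm{id}\times w_i^*)\circ\phi_i$. This is again a trivialisation of $\mE$, with new $\ud$-cocycle $u_{ij}':=w_i^* u_{ij} w_j$ (the cocycle relations are inherited), so the family $\efn:=\{u_{ij}'\}$ satisfies $[\{u_{ij}'\}]=[\efu]$ in $H^1(X,\ud)$. In the induced charts $\wa\pi_i'=\wa w_i^{-1}\circ\wa\pi_i$ the inclusion $\wa\rho|_{X_i}\subseteq\wa\mE|_{X_i}$ becomes, fibrewise, $\wa w_{i,x}^{-1}\circ\beta_{i,x}$, which by the previous paragraph is the constant inclusion $X_i\wa G\subseteq X_i\wa\bH$. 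Comparing the charts $\wa\pi_i',\wa\pi_j'$ over $X_{ij}$, the autofunctor $\wa u_{ij,x}'$ of $\wa\bH$ must carry the constant subcategory $\wa G$ onto itself; hence $\wa u_{ij,x}'$ restricts to an automorphism of $\mO_G$, and by Remark~\ref{rem_amen2} (the isomorphism $NG\to{\bf aut}(\mO_d;\mO_G)$ together with the injectivity of (\ref{def_cag})) we get $u_{ij,x}'\in NG$. Thus $\efn$ is an $NG$-cocycle with $i_*[\efn]=[\efu]$. Moreover, again by Remark~\ref{rem_amen2}, in the charts $\wa\pi_i'$ the transition functors of $\wa\rho$ are $\wa u_{ij,x}'|_{\mO_G}=\wa y_{ij,x}$ with $y_{ij,x}:=p(u_{ij,x}')$, so $\{p\circ u_{ij}'\}$ is a $QG$-cocycle for $\wa\rho$ and therefore $[\efq]=p_*[\efn]$.

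\emph{Identification $\wa\rho=\wa\mG$.} Let $\mG\subseteq\mcUE$ be the subbundle which in the chart of $\mcUE$ determined by $\efn$ equals $X_i\times G\subseteq X_i\times\ud$; this is consistent precisely because the transition maps $\gamma_{ij,x}(u)=u_{ij,x}'\,u\,(u_{ij,x}')^*$ of $\mcUE$ preserve $G$, which is the assertion $u_{ij,x}'\in NG$. Hence $\mG$ is a fibre $G$-bundle whose ${\bf aut}G$-cocycle $\{\wa u_{ij}'\}$ has class $\gamma_*[\efn]\in H^1(X,{\bf aut}G)$. In the charts $\wa\pi_i'$ one has $\wa\rho|_{X_i}=X_i\wa G=\wa\mG|_{X_i}$, so over each $X_i$ both $(\ers)_\rho$ and $(\ers)_\mG$ are fibrewise exactly $(\hrs)_G$ in the sense of (\ref{eq_ga_rs}); by (\ref{eq_t_ersrho}) this forces $(\ers)_\rho=(\ers)_\mG$ for all $r,s$, and since both categories carry the symmetry $\theta_\mE$ we conclude $\wa\rho=\wa\mG$, as required. (Alternatively, once $\wa\rho=\wa\mG$ is known, $[\efq]=p_*[\efn]$ also follows from \thmref{thm1_str_gr} applied to $\efn$.)

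The only genuine obstacle I anticipate is the local straightening step: realising the fibrewise restriction of $j$ as a bona fide continuous map $X_i\to{\bf emb}\mO_G$ — in particular keeping track that tensor product and symmetry are preserved fibrewise — so that \lemref{cor_dual_od} can legitimately be invoked. After the local charts have been straightened, both the descent of the transition maps into $NG$ and the identity $\wa\rho=\wa\mG$ are routine bookkeeping with the correspondences of \thmref{thm_oro}, \thmref{thm_amen2} and Remark~\ref{rem_amen2}.
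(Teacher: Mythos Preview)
Your proof is correct and follows essentially the same approach as the paper: both arguments trivialise $\wa\rho$ and $\mE$ simultaneously, interpret the resulting local comparison as a continuous map into ${\bf emb}\mO_G$, invoke \lemref{cor_dual_od} to lift it to $\ud$, and then conjugate the transition maps of $\mE$ into $NG$, appealing to Remark~\ref{rem_amen2} for the final identification. The only cosmetic differences are that you phrase the argument in terms of categories and ``straightened'' charts whereas the paper works with the associated \sC dynamical systems $\oro\subseteq\coe$, and you spell out the equality $\wa\rho=\wa\mG$ via (\ref{eq_t_ersrho}) a bit more explicitly than the paper, which defers this to \thmref{thm1_str_gr}.
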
 

\begin{proof}
We associate to $\wa \rho$ the locally trivial pointed \sC dynamical system $( \oro , \rho_* , \theta_\mE )$ with fibre $( \mO_G , \sigma_G , \theta )$, equipped with the inclusion $( \oro , \rho_* , \theta_\mE ) \subseteq ( \coe , \sigma_\mE , \theta_\mE )$. There is a finite open cover $\{ X_i \}$ and local charts $\eta_i : \oro |_{X_i} \to C_0(X_i) \otimes \mO_G$, defining the $QG$-cocycle $\efq :=$ $\{ y_{ij} \}$ such that
\[
\wa y_{ij,x}
\ = \
\eta_{i,x} \circ \eta_{j,x}^{-1}
\ \ , \ \
x \in X_{ij}
\ .
\]
Now, up to performing a refinement, we may assume that $\left\{ X_i \right\}$ trivializes $\mE$, so that there are local charts $\pi_i : \mE |_{X_i} \to X_i \times \bH$ with associated $\ud$-cocycle $\efu := \left\{ u_{ij} := \pi_i \circ \pi_j^{-1} \right\}$. Moreover, each $\pi_i$ induces a local chart $\wa \pi_i : \coe \to C_0(X_i) \otimes \mO_d$.
Let us define $\mO_{\rho,i} := \wa \pi_i (\oro)$. We introduce the $C_0(X_i)$-isomorphisms 
\[
\beta_i := \wa \pi_i \circ \eta_i^{-1}
\ \ , \ \ 
\beta_i : C_0(X_i) \otimes \mO_G \stackrel{\simeq}{\lora} \mO_{\rho,i} \subseteq C_0(X_i) \otimes \mO_d
\ ,
\]
so that for each pair $i,j$ we find
\begin{equation}
\label{eq_coc}
\wa y_{ij,x} = \beta_{i,x}^{-1} \circ \wa u_{ij,x} \circ \beta_{j,x} 
\ \ , \ \
x \in X_{ij}
\ .
\end{equation}
Now, each $\beta_i$ may be regarded as a continuous map $\beta_i : X_i \to {\bf emb}\mO_G$, thus by Lemma \ref{cor_dual_od} there is an open cover $\{ Y_{il} \}_l$ of $X_i$ and continuous maps $w_{il} : Y_{il} \to \ud$ such that 
\begin{equation}
\label{eq_coc1}
\beta_{i,x}(t) = \wa w_{il,x}(t) \ \ , \ \ t \in \mO_G \ , \ x \in Y_{il} \ .
\end{equation}
We extract from $\{ Y_{il} \}_{il}$ a finite open cover $\{ Y_h \}$ of $X$; to economize on notation, we introduced the index $h$ instead of $i,l$, so that we have maps $w_h$ satisfying (\ref{eq_coc1}) for each $h$ and $x \in Y_h$. Since $\mE |_{Y_h}$ is trivial, we have that $\efu$ is equivalent to a cocycle defined by transition maps $u_{hk} : Y_{hk} \to \ud$; with a slight abuse of notation, we denote this cocycle again by $\efu$. Of course, the same procedure applies to $\efq = \{ y_{hk} \}$. By (\ref{eq_coc}), we find
\begin{equation}
\label{eq_coc_2}
\wa y_{hk,x} = \wa w_{h,x} \circ \wa u_{hk,x} \circ \wa w_{k,x}^{-1} 
\ \ , \ \
x \in Y_{hk}
\ .
\end{equation}
Now, $\efu$ is equivalent to the $\ud$-cocycle $\efn :=$ $\{ z_{hk} := w_h u_{hk} w^*_k \}$ and (\ref{eq_coc_2}) becomes 
\[
\wa y_{hk,x} (t) = \wa z_{hk,x} (t)
\ \ , \ \
x \in Y_{hk}
\ , \
t \in \mO_G 
\ .
\]
\noindent In other terms, each $z_{hk,x} \in {\bf aut}(\mO_d,\sigma_d,\theta)$ restricts to the automorphism $\wa y_{hk,x} \in {\bf aut}\mO_G$, so by Rem.\ref{rem_amen2} we conclude that $\efn$ takes values in $NG$ and yields a reduction to $NG$ of the structure group of $\mE$. 
Moreover, using again Rem.\ref{rem_amen2} we have $[\efq] = p_*[\efn]$. The fact that $\mG$ has class $\gamma_*[\efn]$ follows by applying the previous theorem to the $NG$-cocycle $\efn$.
\end{proof}

\begin{cor}
\label{cor_str_gr}
Let $\wa \rho$ be a special category with an inclusion $( \wa \rho , \otimes , \iota , \theta_\mE ) \to ( \wa \mE , \otimes , \iota , \theta_\mE )$. Then: (1) There is a compact group $G \subseteq \sud$ such that $\mE$ has an associated $NG$-cocycle $\efn$; (2) $\wa \rho$ has class $Q[\wa \rho] = p_*[\efn] \in H^1(X,QG)$; (3) There is a group bundle $\mG \subseteq \mcSUE$ such that $\wa \rho = \wa \mG$.
\end{cor}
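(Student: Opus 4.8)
The plan is to obtain the statement by combining Theorem~\ref{thm_amen}, which controls the fibre of a special category, with Theorem~\ref{thm_str_gr}, which reduces the structure group of $\mE$; the only point not already packaged in those two results is the refinement $\mG \subseteq \mcSUE$ in part~(3). So first I would apply Theorem~\ref{thm_amen} to the special category $\wa\rho$. Writing $d$ for the dimension of $\rho$, this gives: $\rho$ is amenable, there is a compact Lie group $G \subseteq \sud$ with $(\wa\rho_\bullet,\otimes_\bullet,\iota_\bullet,\eps_\bullet) \simeq (\wa G,\otimes,\iota,\theta)$, and a bijection $\sym(X,\wa\rho_\bullet) \simeq H^1(X,QG)$, where $NG$ is the normalizer of $G$ in $\ud$ and $QG := NG\backslash G$. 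In particular $\wa\rho$ is a $\wa G$-bundle over $X$ (the spectrum $X^\iota$ of $\iota$ being identified with the base of $\mE$ through the given inclusion) equipped with the inclusion $(\wa\rho,\otimes,\iota,\theta_\mE) \to (\wa\mE,\otimes,\iota,\theta_\mE)$; hence $\wa\rho$ is a $\wa G$-bundle \emph{in} $\wa\mE$, in the sense used in Theorem~\ref{thm_str_gr}. Since this inclusion carries the generating object $\rho$ to the generating object $\mE$, the bundle $\mE$ has rank $d$. Finally, every inclusion $G \subseteq \sud$ is covariant, by \cite[Lemma 6.7, Thm.4.17]{DR89}.

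Next I would invoke Theorem~\ref{thm_str_gr}, whose hypotheses are now met: $G \subseteq \sud \subseteq \ud$ is covariant, $\mE$ is a rank-$d$ vector bundle with a $\ud$-cocycle $\efu$, and $\wa\rho$ is a $\wa G$-bundle in $\wa\mE$ with its associated $QG$-cocycle $\efq$, so that $Q[\wa\rho] = [\efq]$ by definition. The theorem then produces an $NG$-cocycle $\efn$ with $[\efu] = i_*[\efn]$ — this is (1) — with $Q[\wa\rho] = [\efq] = p_*[\efn]$ — this is (2) — and with $\wa\rho = \wa\mG$, where $\mG \subseteq \mcUE$ is a fibre-$G$ bundle of class $\gamma_*[\efn] \in H^1(X,{\bf aut}G)$.

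It remains only to upgrade $\mG \subseteq \mcUE$ to $\mG \subseteq \mcSUE$, and I expect this to be the sole place where a (short) argument is needed. Choose a cover $\{X_i\}$ over which $\mcUE$ and $\mG$ trivialize compatibly, say $\mcUE|_{X_i} \simeq X_i \times \ud$ and $\mG|_{X_i} \simeq X_i \times G$, the copy of $G$ being its image under the fixed faithful representation $G \subseteq \sud$. The transition maps of $\mcUE$ are the conjugations $u \mapsto u_{ij,x}\, u\, u_{ij,x}^*$, which preserve determinants; hence $\mcSUE$ is a genuine closed subbundle of $\mcUE$, locally modelled on $X_i \times \sud$. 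Since $G \subseteq \sud$, each local inclusion $\mG|_{X_i} \subseteq X_i \times \sud$ holds, and membership in $\mcSUE$ is a fibrewise, hence local, condition; so these inclusions glue to $\mG \subseteq \mcSUE$, which is (3). Beyond this observation — essentially the remark that conjugation in $\ud$ preserves $\sud$ — what is left is only bookkeeping to align the hypotheses of Theorems~\ref{thm_amen} and~\ref{thm_str_gr}.
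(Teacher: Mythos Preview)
Your proposal is correct and follows essentially the same route as the paper's proof, which simply invokes Thm.~\ref{thm_amen} to get $G \subseteq \sud$ and then applies Thm.~\ref{thm_str_gr} for all three parts. You are in fact more careful than the paper in explicitly arguing that $\mG \subseteq \mcSUE$ (rather than just $\mcUE$), a point the paper takes for granted.
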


\begin{proof}
(1) By Thm.\ref{thm_amen} there is a compact group $G \subseteq \sud$ such that $\wa \rho$ has fibre $\wa G$ and an associated $QG$-cocycle $\efq$; thus, by the previous theorem we conclude that $\mE$ has an associated $NG$-cocycle $\efn$. (2) The previous theorem implies $[\efq] = p_*[\efn]$. (3) We apply again the previous theorem.
\end{proof}

\section{Cohomological invariants and duality breaking.}
\label{class}

In the present section we approach the following question: given a covariant inclusion $G \subseteq \ud$ and a $\wa G$-bundle $( \wa \rho , \otimes , \iota , \eps)$, is there any $\mG$-equivariant vector bundle $\mE \to X^\iota$ with an isomorphism $( \wa \mG , \otimes , \iota , \theta_\mE ) \simeq$ $( \wa \rho , \otimes , \iota , \eps)$? This is what we call the problem of {\em abstract} duality, as -- differently from the previous section -- our category $\wa \rho$ is not presented as a subcategory of ${\bf vect}(X^\iota)$.
We will give a complete answer to the previous question in terms of the cohomology set $H^1(X^\iota,QG)$, reducing the problem of abstract duality to (relatively) simple computations involving cocycles and principal bundles.

As a preliminary step we analyze the setting of \sC bundles. Let $X$ be a compact Hausdorff space. By Lemma \ref{lem_qg} we have that $H^1(X,QG)$ describes the set of isomorphism classes of locally trivial, pointed \sC dynamical systems with fibre $( \mO_G , \sigma_G , \theta )$. For every $QG$-cocycle $\efq$, we denote the associated pointed \sC dynamical system by $( \mO_\efq , \rho_\efq , \eps_\efq )$.
\begin{thm}
\label{thm_emb_O}
With the above notation, for each $QG$-cocycle $\efq$ and $( \mO_\efq , \rho_\efq , \eps_\efq )$, the following are equivalent:
\begin{enumerate}
\item There is a rank $d$ vector bundle $\mE \to X$ with a $C_0(X)$-monomorphism $\eta : ( \mO_\efq , \rho_\efq , \eps_\efq ) \to ( \coe , \sigma_\mE , \theta_\mE )$;
\item There is a gauge \sC dynamical system $( \mO , \mG , \alpha )$ with fibre $( \mO_d , G )$ and structure group $\ud$, such that $\mO_\efq$ is $QG$-$C_0(X)$-isomorphic to the fixed-point algebra $\mO^\alpha$;
\item There is an $NG$-cocycle $\efn$ such that $p_*[\efn] = [\efq]$.
\end{enumerate}
\end{thm}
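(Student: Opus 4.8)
The plan is to prove the cycle $(3)\Rightarrow(2)\Rightarrow(1)\Rightarrow(3)$, assembling the machinery of \secref{sec_bu_co} and \secref{sec_gd} and, at each step, following the $QG$-cocycle $\efq$ through the correspondences of Lemma \ref{lem_qg}, Thm.\ref{thm1_str_gr} and Prop.\ref{prop_we}. For $(3)\Rightarrow(2)$: given an $NG$-cocycle $\efn$ with $p_*[\efn]=[\efq]$, apply Thm.\ref{thm1_str_gr} to obtain a rank $d$ vector bundle $\mE\to X$ (with $\ud$-cocycle $\{i\circ u_{ij}\}$), a fibre $G$-bundle $\mG\subseteq\mcUE$, and a gauge action $\alpha:\mG\times_X\wa\mO_\mE\to\wa\mO_\mE$ with fibre $(\mO_d,G)$ whose fixed-point algebra is $\mO_\mG$, while $\wa\mG$ is a $\wa G$-bundle of class $p_*[\efn]=[\efq]$. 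By Prop.\ref{prop_we}(4), $\coe$ is an $\mO_d$-bundle with structure group $\ud$, so $(\coe,\mG,\alpha)$ is a gauge \sC dynamical system with fibre $(\mO_d,G)$ and structure group $\ud$, with fixed-point algebra $\mO_\mE^\alpha=\mO_\mG$. Since $(\mO_\mG,\sigma_\mG,\theta_\mE)$ is the pointed \sC dynamical system of the $\wa G$-bundle $\wa\mG$, whose class is $[\efq]$, Lemma \ref{lem_qg} supplies a $QG$-$C_0(X)$-isomorphism $\mO_\efq\simeq\mO_\mG=\mO_\mE^\alpha$, which is $(2)$.

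For $(2)\Rightarrow(1)$: let $(\mO,\mG,\alpha)$ be as in $(2)$. As an $\mO_d$-bundle with structure group $\ud$, $\mO$ is determined by a $\ud$-cocycle $\efu$; taking $\mE\to X$ to be the rank $d$ vector bundle with cocycle $\efu$, Prop.\ref{prop_we}(3)--(4) shows $\coe$ is an $\mO_d$-bundle with $\ud$-cocycle $[\efu]$, hence $\ud$-$C_0(X)$-isomorphic to $\mO$; transporting $\alpha$ along this isomorphism we may assume $\mO=\coe$. Fibrewise the gauge action is then implemented by elements of $G\subseteq\ud$ acting on $\mE_x$, so $\mG$ embeds into $\mcUE$ as a closed subbundle and we are in the situation of \secref{sec_gd}: $\wa\mG=\wa\alpha$, $\mO_\mE^\alpha=\mO_\mG$, with a canonical monomorphism $E_*:(\mO_\mG,\sigma_\mG,\theta_\mE)\to(\coe,\sigma_\mE,\theta_\mE)$. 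Since $\mO_\mE^\alpha=\mO_\mG$ and $\mO_\efq$ are both $\mO_G$-bundles with structure group $QG$, the hypothesised $QG$-$C_0(X)$-isomorphism $\mO_\efq\simeq\mO^\alpha$ is, by Rem.\ref{rem_lt_pds}, an isomorphism of pointed \sC dynamical systems $(\mO_\efq,\rho_\efq,\eps_\efq)\simeq(\mO_\mG,\sigma_\mG,\theta_\mE)$; composing with $E_*$ yields the $C_0(X)$-monomorphism $\eta$ of $(1)$.

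For $(1)\Rightarrow(3)$: given $\mE$ and $\eta$ as in $(1)$, Thm.\ref{thm_oro} turns $\eta$ into an embedding functor $\beta:(\wa\rho_\efq,\otimes,\iota,\eps_\efq)\to(\wa\mE,\otimes,\iota,\theta_\mE)$, where $\wa\rho_\efq$ is the $\wa G$-bundle of class $[\efq]$ (Lemma \ref{lem_qg}) and $\beta$ sends $\rho_\efq^r$ to $\mE^r$. Its image $\wa\rho:=\beta(\wa\rho_\efq)$ is a symmetric tensor \sC subcategory of $\wa\mE$ with objects $\mE^r$, $r\in\bN$, and $\beta$ is a $C_0(X)$-isofunctor onto $\wa\rho$; hence $\wa\rho$ is a $\wa G$-bundle in $\wa\mE$, whose $QG$-cocycle is, by Thm.\ref{thm_amen2}, that of $\wa\rho_\efq$, namely $[\efq]$. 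Since $G\subseteq\ud$ is covariant, Thm.\ref{thm_str_gr} now applies to $\mE$ and $\wa\rho$ and yields an $NG$-cocycle $\efn$ reducing the structure group of $\mE$ with $[\efq]=p_*[\efn]$, which is $(3)$.

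The implications $(3)\Rightarrow(2)$ and $(2)\Rightarrow(1)$ amount to bookkeeping with Thm.\ref{thm1_str_gr}, Prop.\ref{prop_we} and Rem.\ref{rem_lt_pds}. The step carrying the real weight is $(1)\Rightarrow(3)$: one must recognize the image of an abstract embedding $\eta$ as a genuine $\wa G$-bundle sitting inside $\wa\mE$ --- so that the geometry of the concrete bundle $\mE$ becomes visible --- and then invoke the structure-group reduction Thm.\ref{thm_str_gr}, whose proof in turn relies on covariance of $G\subseteq\ud$ and on the identification of ${\bf emb}\mO_G$ with $\ud\backslash G$ (Lemma \ref{cor_dual_od}). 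This is where the standing hypothesis that $G\subseteq\ud$ is covariant is indispensable.
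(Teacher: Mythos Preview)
Your proof is correct and follows the same cycle $(3)\Rightarrow(2)\Rightarrow(1)\Rightarrow(3)$ as the paper. For $(3)\Rightarrow(2)$ the paper invokes the abstract Lemma~\ref{lem_dd} (with $\mF_\bullet=\mO_d$, $\mA_\bullet=\mO_G$) rather than the concrete Thm.~\ref{thm1_str_gr} that you use, but these are equivalent here; and your $(2)\Rightarrow(1)$ is essentially identical to the paper's.

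The one point worth noting is $(1)\Rightarrow(3)$. The paper dispatches this in a single line, ``Apply again Lemma~\ref{lem_dd} with $\mF_\bullet=\mO_d$ and $\mA_\bullet=\mO_G$'', but Lemma~\ref{lem_dd} as stated presupposes a gauge \sC dynamical system with structure group $NG$, not merely a monomorphism into some $\coe$. Your route --- passing through Thm.~\ref{thm_oro} to realize $\eta$ as an inclusion $\wa\rho_\efq\hookrightarrow\wa\mE$ and then invoking Thm.~\ref{thm_str_gr} to reduce the structure group of $\mE$ to $NG$ --- makes explicit exactly the work that the paper's one-liner leaves to the reader. You are also right that covariance of $G\subseteq\ud$ (via Lemma~\ref{cor_dual_od}) is what powers this step, since it is the hypothesis of Thm.~\ref{thm_str_gr}.
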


\begin{proof}
(3) $\Rightarrow$ (2): we consider the Cuntz algebra $\mO_d$ endowed with the $NG$-action (\ref{def_cag}), which factorizes through the action $QG \to {\bf aut}\mO_G$. Then we apply Lemma \ref{lem_dd} with $\mF_\bullet = \mO_d$ and $\mA_\bullet = \mO_G$. 
(2) $\Rightarrow$ (1): Let $\efn := \{ u_{ij} \}$ denote the $\ud$-cocycle associated with $\mO$ and $\mE \to X$ be the rank $d$ vector bundle with transition maps $\{ u_{ij} \}$. According to Prop.\ref{prop_we} there is a $\ud$-$C_0(X)$-isomorphism $\mO \simeq \coe$, so that, to be concise, we identify $\mO$ with $\coe$. Now, by construction of $\coe$ there is an inclusion $\wE \subset \coe$, to which corresponds an inclusion $\mE \subset \wa \mO_\mE$; since $( \coe , \mG , \alpha )$ has fibre $( \mO_d , G)$, with $G$ acting on $\mO_d$ as in (\ref{def_cag}), we conclude that $\mE$ is $\mG$-stable and the map $\alpha : \mG \times_X \wa \mO_\mE \to \wa \mO_\mE$ restricts to an action
\begin{equation}
\label{eq_emb_O}
\mG \times_X \mE \to \mE \ ,
\end{equation}
i.e., $\mE$ is $\mG$-equivariant. By Thm.\ref{thm1_str_gr}, we conclude that $\mO^\alpha = \mO_\mG$. Moreover, by Thm.\ref{thm_amen2} we have $QG = {\bf aut}( \mO_G , \sigma_G , \theta )$, thus, from Rem.\ref{rem_lt_pds} we conclude that the given $QG$-$C_0(X)$-isomorphism $\beta : \mO_\efq \to \mO_\mG$ yields monomorphisms
\begin{equation}
\label{eq_emb_O2}
( \mO_\efq , \rho_\efq   , \eps_\efq   ) \stackrel{\beta}{\lora}
( \mO_\mG , \sigma_\mG , \theta_\mE ) \hra
( \coe    , \sigma_\mE , \theta_\mE ) \ .
\end{equation}
(1) $\Rightarrow$ (3): Apply again Lemma \ref{lem_dd} with $\mF_\bullet = \mO_d$ and $\mA_\bullet = \mO_G$.
%
%
%
%
\end{proof}

Now, by Thm.\ref{thm_amen2} there are maps
\begin{equation}
\label{thm_class}
\left\{
\begin{array}{ll}
\sym ( X , \wa G ) \to H^1 ( X , QG ) 
\ \ , \ \ 
[ \wa \rho , \otimes , \iota , \eps ]  \mapsto  Q[\wa \rho]
\\
H^1 ( X,QG ) \to \sym ( X , \wa G )
\ \ , \ \
[\efq]  \mapsto [ \wa \rho_\efq , \otimes , \iota , \eps_\efq ]
\end{array}
\right.
\end{equation}
which are the inverses one of each other. The following result is the translation of Thm.\ref{thm_emb_O} in categorical terms; the proof is an immediate application of Thm.\ref{thm_oro}, Thm.\ref{thm_amen2} and Thm.\ref{thm_str_gr}, thus it will be omitted.
\begin{thm}
\label{thm_emb}
Let $d \in \bN$ and $G \subseteq \ud$ be covariant. For each $\wa G$-bundle $(\wa \rho , \otimes , \iota , \eps)$, the following are equivalent:
\begin{enumerate}
\item  There is an embedding functor $E : \wa \rho \hra {\bf vect}(X^\iota)$;
\item  There is a vector bundle $\mE \to X^\iota$ and a compact $G$-bundle $\mG \subseteq \mcUE$ with an isomorphism
$( \wa \rho , \otimes , \iota , \eps ) 
 \simeq
 ( \wa \mG ,\otimes , \iota , \theta_\mE )$;
\item There is an $NG$-cocycle $\efn$ such that $p_*[\efn] = Q[\wa \rho]$.
\end{enumerate}
\end{thm}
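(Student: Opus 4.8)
The plan is to reduce the statement to its \sC algebraic form Thm.\ref{thm_emb_O}, transporting the data through the correspondences of Thm.\ref{thm_oro} and Thm.\ref{thm_amen2}, and then to bring condition~(2) into the loop by means of the structure-group reductions of Thm.\ref{thm1_str_gr} and Thm.\ref{thm_str_gr}.

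First I would fix, by Thm.\ref{thm_amen2}, a $QG$-cocycle $\efq$ representing $Q[\wa\rho]$, and use Thm.\ref{thm_oro} to identify $(\wa\rho,\otimes,\iota,\eps)$ with the locally trivial pointed \sC dynamical system $(\mO_\efq,\rho_\efq,\eps_\efq)$; thus $[\efq]=Q[\wa\rho]$, and under this dictionary tensor \sC monofunctors out of $\wa\rho$ preserving tensor product and symmetry become $C(X^\iota)$-monomorphisms out of $(\mO_\efq,\rho_\efq,\eps_\efq)$, while symmetric tensor \sC isofunctors become isomorphisms of such. Next I would note that an embedding functor $E:\wa\rho\to{\bf vect}(X^\iota)$ is the same datum as a vector bundle $\mE:=E(\rho)$ together with a corestricted tensor \sC monofunctor $\wa\rho\to\wa\mE$ preserving $\theta_\mE$: indeed $E$ preserves $\otimes$ and $\iota$, so $E(\rho^r)=\mE^r$, and $\wa\mE$ is the full subcategory of ${\bf vect}(X^\iota)$ on these objects. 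Moreover $\mE$ has rank $d$, since $E$, being faithful and symmetry-preserving, carries the antisymmetric projections $P_{\rho,\eps,n}$ of (\ref{def_ap}) to $P_{\mE,\theta_\mE,n}$, and these vanish precisely for $n>d$. By Thm.\ref{thm_oro} the corestricted functor corresponds to a $C(X^\iota)$-monomorphism $(\mO_\efq,\rho_\efq,\eps_\efq)\to(\coe,\sigma_\mE,\theta_\mE)$, and conversely; hence condition~(1) here is equivalent to condition~(1) of Thm.\ref{thm_emb_O} taken with $X=X^\iota$. Since condition~(3) here is, verbatim, condition~(3) of Thm.\ref{thm_emb_O} (because $[\efq]=Q[\wa\rho]$), the equivalence (1)$\,\LRa\,$(3) follows directly from Thm.\ref{thm_emb_O}.

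It remains to splice in (2). The implication (2)$\,\Ra\,$(1) is immediate: compose the given isomorphism $(\wa\rho,\otimes,\iota,\eps)\simeq(\wa\mG,\otimes,\iota,\theta_\mE)$ with the inclusion functor $\wa\mG\subseteq\wa\mE\hra{\bf vect}(X^\iota)$. For (3)$\,\Ra\,$(2) I would feed an $NG$-cocycle $\efn$ with $p_*[\efn]=Q[\wa\rho]$ into Thm.\ref{thm1_str_gr}, obtaining a rank $d$ vector bundle $\mE\to X^\iota$ and a fibre $G$-bundle $\mG\subseteq\mcUE$ such that $(\wa\mG,\otimes,\iota,\theta_\mE)$ is a $\wa G$-bundle of class $p_*[\efn]=Q[\wa\rho]$; since $Q$ is injective (Thm.\ref{thm_amen2}) this yields $[\wa\mG]=[\wa\rho]$ in $\sym(X^\iota,\wa G)$, i.e.\ the desired isomorphism. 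For (2)$\,\Ra\,$(3), given such an isomorphism I would regard $\wa\mG$ as a $\wa G$-bundle inside $\wa\mE$ whose $QG$-cocycle $\efq$ satisfies $[\efq]=Q[\wa\mG]=Q[\wa\rho]$, and apply Thm.\ref{thm_str_gr} to reduce the structure group of $\mE$ to $NG$, producing an $NG$-cocycle $\efn$ with $[\efq]=p_*[\efn]$.

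This proof has no hard core of its own: all the substance sits in Thm.\ref{thm_emb_O} and in the reduction theorems Thm.\ref{thm1_str_gr} and Thm.\ref{thm_str_gr}. The only step demanding genuine (if routine) attention is the one sketched in the second paragraph --- unpacking ``embedding functor into ${\bf vect}(X^\iota)$'' as ``embedding into $\wa\mE$ with $\mE=E(\rho)$ of rank $d$'' and checking its compatibility with the dictionaries of Thm.\ref{thm_oro} and Thm.\ref{thm_amen2}; in particular it is the rank computation that guarantees that the bundle appearing in~(2) is a subbundle of $\mcUE$, as the statement requires.
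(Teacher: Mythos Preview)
Your proposal is correct and follows essentially the same route the paper indicates: the paper omits the proof entirely, simply stating that the result is the categorical translation of Thm.\ref{thm_emb_O} and follows immediately from Thm.\ref{thm_oro}, Thm.\ref{thm_amen2}, and Thm.\ref{thm_str_gr}. Your elaboration---reducing (1)$\Leftrightarrow$(3) to Thm.\ref{thm_emb_O} via the dictionary of Thm.\ref{thm_oro}, and handling (2) through Thm.\ref{thm1_str_gr} and Thm.\ref{thm_str_gr}---is exactly what the paper has in mind, and your rank-$d$ argument for $E(\rho)$ via the antisymmetric projections is a useful detail the paper does not spell out.
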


We call {\em a gauge group associated with} $\wa \rho$ the bundle $\mG \to X$ appearing in Thm.\ref{thm_emb}, whose isomorphism class is labeled by $\gamma_*[\efn] \in H^1(X,{\bf aut}G)$.
It follows from the previous theorem that the set of embedding functors $E : \wa \rho \hra {\bf vect}(X)$ is in one-to-one correspondence with the set 
of $NG$-cocycles $\efn$ such that $p_*[\efn] = Q[\wa \rho]$,
that we denote by $Z^1(X,NG;\wa \rho)$.
As we shall see in the sequel, $Z^1(X,NG;\wa \rho)$ may contain more than a cohomology class, or be empty. 
Let $\efn , \efn' \in Z^1(X,NG;\wa \rho)$ and $\mG$, $\mG'$ denote the associated gauge groups. In general, $\mG$ may be not isomorphic to $\mG'$; an example of this phenomenon with $G = {\mathbb{SU}}(2)$ is provided in Ex.\ref{ex_su2}.

\begin{cor}
\label{cor_cs}
Let $G \subseteq \ud$ be covariant. If there is a continuous monomorphism $s : QG \to NG$, $p \circ s = id_{QG}$, then for each $\wa G$-bundle $( \wa \rho , \otimes , \iota , \eps )$ there is at least one embedding functor $\wa \rho \hra {\bf vect}(X^\iota)$.
\end{cor}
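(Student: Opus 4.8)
The plan is to reduce the corollary to criterion (3) of Theorem~\ref{thm_emb}, namely the existence of an $NG$-cocycle $\efn$ with $p_*[\efn] = Q[\wa\rho]$. Given the $\wa G$-bundle $(\wa\rho,\otimes,\iota,\eps)$, Theorem~\ref{thm_amen2} provides its associated class $\efq := Q[\wa\rho] \in H^1(X^\iota,QG)$; concretely, after passing to a good cover $\{X_i\}$ of $X^\iota$, this is represented by a $QG$-cocycle $\{y_{ij}\}$ with $y_{ij} : X_{ij}\to QG$ continuous and satisfying the cocycle identity on triple overlaps. The natural move is to \emph{lift} this cocycle along $p$ using the continuous section $s$.

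First I would set $u_{ij} := s\circ y_{ij} : X_{ij}\to NG$. Since $s$ is a continuous homomorphism and $\{y_{ij}\}$ is a cocycle, $u_{ij}u_{jk} = s(y_{ij})s(y_{jk}) = s(y_{ij}y_{jk}) = s(y_{ik}) = u_{ik}$ on $X_{ijk}$, so $\efn := \{u_{ij}\}$ is genuinely an $NG$-cocycle (this is exactly where a mere \emph{set-theoretic} section would fail — one would only get a cocycle pair with a nontrivial $G$-valued $2$-cochain, which is the obstruction $\delta$ of Lemma~\ref{lem_ddc}). Then $p_*[\efn]$ is represented by $\{p\circ u_{ij}\} = \{p\circ s\circ y_{ij}\} = \{y_{ij}\}$, using $p\circ s = \mathrm{id}_{QG}$, so $p_*[\efn] = [\efq] = Q[\wa\rho]$. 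Thus $\efn \in Z^1(X^\iota,NG;\wa\rho)$, and in particular this set is nonempty.

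Finally I would invoke Theorem~\ref{thm_emb}: condition (3) holds, hence condition (1) holds, i.e.\ there is an embedding functor $E : \wa\rho \hra {\bf vect}(X^\iota)$. One small point of care is the passage to a good cover refining both the cover on which $\efq$ is presented and any cover needed for local triviality of $\wa\rho$, but since $X^\iota$ is assumed to admit good covers and $H^1$ is defined as an inverse limit over such covers, this is routine. I do not anticipate a serious obstacle here; the content of the corollary is entirely in recognizing that a continuous splitting $s$ kills the lifting obstruction, and the real work has already been done in Theorems~\ref{thm_amen2} and~\ref{thm_emb}.
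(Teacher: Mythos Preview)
Your proof is correct and follows the same approach as the paper: both use the section $s$ to lift the $QG$-cocycle to an $NG$-cocycle and then invoke Theorem~\ref{thm_emb}. The paper's version is simply more terse, appealing directly to functoriality of $H^1(X,\,\cdot\,)$ to obtain $s_*$ with $p_*\circ s_* = \mathrm{id}$, whereas you spell this out explicitly at the cocycle level.
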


\begin{proof}
By functoriality there is $s_* : H^1(X^\iota,QG) \to H^1(X^\iota,NG)$ such that $p_* \circ s_*$ is the identity on $H^1(X^\iota,QG)$. Thus $Q[\wa \rho] = p_*[\efn]$,  $\efn := s_* \circ Q[\wa \rho]$ and this means that the desired embedding functor exists.
\end{proof}

\begin{ex}
\label{ex_sud}
Let $G = \sud$, so that $NG = \ud$ and $QG = \bT$. By (\ref{thm_class}), we have the map
\[
Q : \sym (X,\wa \sud) \to H^1(X,\bT) \ .
\]
%
%
Elementary computations show that the quotient map $p : \ud \to \bT$ is the determinant; we define the continuous section $s : \bT \hra \ud$, $s (z) = z \oplus 1_{d-1}$, where $1_{d-1}$ is the identity of $\bM_{d-1}$. Since $s$ is multiplicative, we conclude by Cor.\ref{cor_cs} that for each $\wa \sud$-bundle $\wa \sigma$ there is at least one embedding functor 
$E : \wa \sigma \to {\bf vect} (X)$.
\end{ex}

For future reference, we consider the well-known isomorphism $B : H^1(X,\bT) \to H^2(X,\bZ)$. Moreover, we recall the reader to (\ref{def_cc_tso}).
\begin{cor}
\label{cor_dc}
Let $( \wa \rho , \otimes , \iota , \eps )$ be a special category such that $\rho$ has dimension $d \in \bN$ and Chern class $c \in H^2(X^\iota , \bZ)$. Then there is an $\wa \sud$-bundle $(\wa \sigma,\otimes,\iota,\eps)$ with an inclusion functor 
\begin{equation}
\label{eq_emb_rdc}
( \wa \sigma , \otimes , \iota , \eps ) 
\to 
( \wa \rho , \otimes , \iota , \eps )
\ .
\end{equation}
If $E : \wa \rho \hra {\bf vect}(X^\iota)$ is an embedding functor and $\mE := E(\rho)$, then there is a factorization
\begin{equation}
\label{eq_fact}
\xymatrix{
   \wa \sigma
   \ar[r]
   \ar[d]^-{E}_-{\simeq}
 & \wa \rho
   \ar[d]^-{E}
\\ \wa \mcSUE
   \ar[r]^-{\subseteq}
 & \wa \mE
}
\end{equation}
and $\mE$ has first Chern class $c_1(\mE) = c$.
\end{cor}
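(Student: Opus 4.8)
The plan is twofold: first to carve out the $\wa\sud$-subcategory $\wa\sigma$ directly inside $\wa\rho$, and then, given an embedding functor $E$, to identify $E(\wa\sigma)$ with $\wa\mcSUE$, from which both the factorization (\ref{eq_fact}) and the equality $c_1(\mE)=c$ will drop out.

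For the first part, recall from Thm.\ref{thm_amen} that the fibre of $\wa\rho$ is isomorphic to $\wa G$ for some compact $G\subseteq\sud$, and that inside $\wa G$ sits the subcategory $\wa\sud$, with arrows $(\hrs)_{\sud}\subseteq(\hrs)_G$. I would fix local charts $\alpha_i:\wa\rho|_{X_i}\to X_i\wa G$; by Thm.\ref{thm_amen2} and Rem.\ref{rem_amen2} the transition autofunctors $\alpha_i\circ\alpha_j^{-1}$ act fibrewise as $\wa u_{ij,x}$ with $u_{ij}:X_{ij}\to NG\subseteq\ud$. Since $\sud$ is normal in $\ud$, $\wa u$ preserves $(\hrs)_{\sud}$ for every $u\in\ud$ (if $t\in(\hrs)_{\sud}$ and $v\in\sud$, then $v_s\,u_s t u_r^*\,v_r^*=u_s\,(u^*vu)_s\,t\,(u^*vu)_r^*\,u_r^*=u_s t u_r^*$); hence each transition autofunctor restricts to an autofunctor of $\wa\sud$ and the subcategories $\alpha_i^{-1}(X_i\wa\sud)$ glue to a locally trivial symmetric tensor \sC subcategory $\wa\sigma\subseteq\wa\rho$ with fibre $\wa\sud$. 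Since $\theta\in(\bH^2,\bH^2)_{\sud}$, the restriction of $\eps$ is a symmetry for $\wa\sigma$ and the inclusion $\wa\sigma\subseteq\wa\rho$ is the functor (\ref{eq_emb_rdc}); thus $\wa\sigma$ is an $\wa\sud$-bundle, which proves the first assertion.

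Now let $E:\wa\rho\hra{\bf vect}(X^\iota)$ be an embedding functor and $\mE:=E(\rho)$, a rank $d$ vector bundle. Since $E$ preserves the symmetry it intertwines the $\bP_\infty$-representations generated by $\eps$ and by $\theta_\mE$, so $E(P_{\rho,\eps,d})=P_{\mE,\theta_\mE,d}$ and therefore $E$ carries $\mR_\rho=\{\psi\in(\iota,\rho^d):P_{\rho,\eps,d}\psi=\psi\}$ into $(\iota,\wedge^d\mE)$ (recall $\wedge^d\mE=P_{\mE,\theta_\mE,d}\mE^d$ from Rem.\ref{rem_vb_lc}). Passing to fibres, $E$ restricts to injective isometries of the one-dimensional spaces $(\mR_\rho)_x\to\wedge^d(\mE_x)$, hence to a $C(X^\iota)$-module isomorphism $\mR_\rho\to(\iota,\wedge^d\mE)$, i.e. a line-bundle isomorphism $\mL_\rho\simeq\wedge^d\mE$; since $\wedge^d\mE$ is the determinant line bundle of $\mE$, this gives $c=c_1(\mL_\rho)=c_1(\wedge^d\mE)=c_1(\mE)$.

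It remains to identify $E(\wa\sigma)$ with $\wa\mcSUE$. Fibrewise, $E_x:(\wa\rho)_x\to(\wa\mE)_x\simeq\wa\bH$ is a \sC monofunctor preserving tensor product and symmetry; since $\sud\subseteq\ud$ is covariant (see the discussion around Lemma \ref{cor_dual_od} and Thm.\ref{thm_amen}), after identifying $\mE_x\simeq\bH$ there is $w\in\ud$ with $E_x|_{\wa\sud}=\wa w|_{\wa\sud}$, and normality of $\sud$ gives $\wa w(\wa\sud)=\wa\sud$, so $(E(\wa\sigma))_x=\wa\sud=(\wa\mcSUE)_x$. As $E(\wa\sigma)$ is a symmetric tensor \sC subcategory of $\wa\mE$ containing $E(\mR_\rho)=(\iota,\wedge^d\mE)$, Prop.\ref{rem_dual} gives $E(\wa\sigma)=\wa\mG'$ for a unique closed subbundle $\mG'\subseteq\mcSUE$; comparing fibres, $(\wa\mG')_x=\wa\sud$, so $(\mG')_x=\sud=(\mcSUE)_x$ by Doplicher--Roberts duality (the group is recovered as the stabiliser of its dual, \cite[Cor.3.3]{DR87}), whence $\mG'=\mcSUE$ and $E(\wa\sigma)=\wa\mcSUE$. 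Then $E|_{\wa\sigma}:\wa\sigma\to\wa\mcSUE$ is a \sC monofunctor onto its image, hence a symmetric tensor \sC isofunctor, and (\ref{eq_fact}) commutes because both composites equal the restriction of $E$ to $\wa\sigma$. The main obstacle is precisely this identification $E(\wa\sigma)=\wa\mcSUE$, which rests on the covariance of $\sud\subseteq\ud$ together with Prop.\ref{rem_dual}; the gluing of the first part and the Chern-class bookkeeping are then routine.
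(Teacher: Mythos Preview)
Your proof is correct, but it takes a different route from the paper's. The paper defines $\wa\sigma$ \emph{intrinsically}, as the tensor \sC subcategory of $\wa\rho$ generated by the symmetry operators $\eps_\rho(r,s)$, $r,s\in\bN$, together with the module $\mR_\rho$. The factorization (\ref{eq_fact}) then falls out in one line: since $E$ preserves symmetry, $E(\eps_\rho(r,s))=\theta_\mE(r,s)$, and the same argument you give yields $E(\mR_\rho)=(\iota,\wedge^d\mE)$; but the spaces $(\hrs)_{\sud}$ are generated precisely by $\theta(p)$, $p\in\bP_\infty$, and $S$ (as remarked after (\ref{eq_spce})), so $\wa\mcSUE$ is generated by $\theta_\mE(r,s)$ and $(\iota,\wedge^d\mE)$, and hence $E(\wa\sigma)=\wa\mcSUE$ immediately.

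Your approach is more extrinsic: you build $\wa\sigma$ by gluing local copies of $\wa\sud$ using that $NG\subseteq\ud$ normalizes $\sud$, and you identify $E(\wa\sigma)$ with $\wa\mcSUE$ by combining the covariance of $\sud\subseteq\ud$ (fibrewise) with the duality of Prop.~\ref{rem_dual}. This works, and it has the virtue of showcasing how the general machinery of the paper (Lemma~\ref{cor_dual_od}, Prop.~\ref{rem_dual}) applies; but it is heavier than necessary here, where the explicit generator description of $\wa\sud$ makes the image of $E|_{\wa\sigma}$ transparent. The Chern-class step is the same in both proofs.
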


\begin{proof}
We define $\wa \sigma$ as the tensor \sC subcategory of $\wa \rho$ generated by the symmetry operators $\eps_\rho (r,s)$, $r,s \in \bN$, and the elements of $\mR_\rho$ (see (\ref{def_cc_tso}) and following remarks). The obvious inclusion $\wa \sigma \subseteq \wa \rho$ yields the functor (\ref{eq_emb_rdc}). 
If $E$ is an embedding functor then $E(P_{\rho,\eps,d}) = P_d$ (see Rem.\ref{rem_vb_lc}); this implies $E(\mR_\rho) = (\iota , \wedge^d \mE)$ and we conclude that $c(\rho) = c_1(\mE)$. Finally, since the spaces of arrows of $\wa \mcSUE$ are generated by the flips $\theta_\mE(r,s) = E(\eps_\rho (r,s))$, $r,s \in \bN$, and elements of $(\iota,\wedge^d\mE)$, we obtain the desired factorization (\ref{eq_fact}).
\end{proof}

\begin{cor}
Let $(\wa \sigma,\otimes,\eps,\iota)$ be an $\wa \sud$-bundle with $B \circ Q[\wa \sigma] \in H^2(X^\iota,\bZ)$. Then the set of embedding functors $E : \wa \sigma \to {\bf vect}(X^\iota)$ coincides with the set of vector bundles over $X^\iota$ of rank $d$ and first Chern class $B \circ Q[\wa \sigma]$.
\end{cor}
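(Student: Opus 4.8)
The plan is to specialize \thmref{thm_emb} to $G = \sud$ and then to translate the resulting cohomological condition into a condition on first Chern classes. First I would invoke Ex.\ref{ex_sud}: for $G = \sud$ one has $NG = \ud$, $QG = \bT$, and the quotient map $p : \ud \to \bT$ is the determinant homomorphism $u \mapsto \det u$. Hence \thmref{thm_emb}, together with the remark following it, identifies the set of embedding functors $E : \wa\sigma \to {\bf vect}(X^\iota)$ with $Z^1(X^\iota,\ud;\wa\sigma)$, the set of $\ud$-cocycles $\efn$ satisfying $\det_*[\efn] = Q[\wa\sigma]$. Unwinding the chain $(3)\Ra(2)\Ra(1)$ in the proof of \thmref{thm_emb_O}, the cocycle attached to an embedding functor $E$ is a $\ud$-cocycle of the rank $d$ vector bundle $\mE := E(\sigma)$, so that this identification is realized by the assignment $E \mapsto E(\sigma)$.

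Next I would translate the constraint $\det_*[\efn] = Q[\wa\sigma]$. By \propref{prop_we} a $\ud$-cocycle $\efn = \{ u_{ij} \}$ on $X^\iota$ is a set of transition maps of a rank $d$ vector bundle $\mE_\efn$; then $\{ \det u_{ij} \}$ is a set of transition maps of the determinant line bundle $\wedge^d \mE_\efn$, so $\det_*[\efn]$ is precisely the class of $\wedge^d\mE_\efn$ in $H^1(X^\iota,\bT)$. Applying the isomorphism $B : H^1(X^\iota,\bT) \to H^2(X^\iota,\bZ)$, which sends the class of a line bundle to its first Chern class, I obtain $B(\det_*[\efn]) = c_1(\wedge^d \mE_\efn) = c_1(\mE_\efn)$, the last equality being the identification recalled in Rem.\ref{rem_vb_lc}. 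Since $B$ is bijective, the condition $\det_*[\efn] = Q[\wa\sigma]$ is thus equivalent to $c_1(\mE_\efn) = B \circ Q[\wa\sigma]$.

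Combining the two steps, $E \mapsto E(\sigma)$ will give the asserted one-to-one correspondence between embedding functors $\wa\sigma \to {\bf vect}(X^\iota)$ and rank $d$ vector bundles over $X^\iota$ with first Chern class $B \circ Q[\wa\sigma]$: the fact that $E(\sigma)$ is such a bundle, and the one-to-one property, come from the first two steps; for surjectivity I would feed a $\ud$-cocycle $\efn$ of an arbitrary rank $d$ vector bundle $\mE$ with $c_1(\mE) = B \circ Q[\wa\sigma]$ --- which satisfies $\det_*[\efn] = Q[\wa\sigma]$ by the second step --- into the implication $(3)\Ra(1)$ of \thmref{thm_emb}, producing an embedding functor $E$ with $E(\sigma) \simeq \mE$. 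This is also consistent with Cor.\ref{cor_dc}, which already records $c_1(E(\sigma)) = c(\sigma)$ for any embedding functor of a special category.

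I expect the only genuinely delicate point to be the bookkeeping in the first step: verifying that the literal ``set of embedding functors'' matches $Z^1(X^\iota,\ud;\wa\sigma)$ and that this matching is compatible with $E \mapsto E(\sigma)$ on the nose, rather than merely up to isomorphism of the target vector bundle. This is exactly what the remark after \thmref{thm_emb} provides, together with the observation that in the proof of $(2)\Ra(1)$ of \thmref{thm_emb_O} the embedding is built from the tautological inclusion $\wE \subset \coe$ (equivalently $\mE \subset \wa \mO_\mE$), so that the generating object $\sigma$ is returned to $\mE$; the remaining verifications are routine.
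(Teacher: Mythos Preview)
The paper states this corollary without proof, as an immediate consequence of the surrounding results (Ex.~\ref{ex_sud}, Thm.~\ref{thm_emb} and the remark following it, Cor.~\ref{cor_dc}). Your proposal spells out precisely this implicit argument: specialize Thm.~\ref{thm_emb} to $G=\sud$, where $NG=\ud$, $QG=\bT$ and $p=\det$, then translate $\det_*[\efn]=Q[\wa\sigma]$ into the first Chern class condition via $B$ and the determinant line bundle. This is correct and is exactly the route the paper intends.

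Your caveat about bookkeeping is well taken: the paper's phrase ``the set of embedding functors \ldots\ coincides with the set of vector bundles'' is informal, and the remark after Thm.~\ref{thm_emb} literally matches embedding functors with \emph{cocycles} in $Z^1(X^\iota,\ud;\wa\sigma)$ rather than with cohomology classes (i.e., isomorphism classes of vector bundles). The honest statement is a bijection between suitable equivalence classes on both sides, realized by $E\mapsto E(\sigma)$; you have correctly identified this as the only point needing care, and the paper's own formulation is no more precise than yours.
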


For the notion of {\em conjugate} in the setting of tensor \sC categories, we refer the reader to \cite[\S 2]{LR97}.
\begin{thm}
\label{thm_ci}
Let $d \in \bN$ and $G \subseteq \ud$ be covariant. Then for every $\wa G$-bundle $(\wa \rho , \otimes , \iota , \eps)$ the following invariants are assigned:
\[
\left\{
\begin{array}{ll}
\delta (\wa \rho)  
:=  
\delta \circ Q[\wa \rho] 
\in H^2( X^\iota , {G'} )  \ ,
&
\\
\breve \gamma_* (\wa \rho)  
:=  
\breve \gamma_* \circ Q[\wa \rho] 
\in 
\breve H^2(X^\iota,\aG)  \ .
&
{}
\end{array}
\right.
\]
The class $\breve \gamma_* (\wa \rho)$ defines a $G$-gerbe $\breve \mG$ over $X^\iota$, unique up to isomorphism, which collapses to a $G$-bundle $\mG$ if and only if there is an embedding functor $E : \wa \rho \to {\bf vect}(X^\iota)$, and in such a case
$\delta(\wa \rho) = [{\bf 1}]$. 
%
%
When $G \subseteq \sud$, the Chern class 
\[
c (\rho) \in H^2 ( X^\iota , \bZ ) \ ,
\]
defined in (\ref{def_cc_tso}), fulfilles the following properties: if $c(\rho) = 0$ then $\rho$ is a special object and the closure for subobjects of $\wa \rho$ has conjugates; if $E : \wa \rho \to {\bf vect}(X^\iota)$ is an embedding functor then $c(\rho)$ is the first Chern class of $E(\rho)$. 
\end{thm}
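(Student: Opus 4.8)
## Proof proposal for Theorem~\ref{thm_ci}

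The plan is to assemble the statement from the machinery of \S\ref{sec_pb} together with the classification results of \S\ref{sec_sp_cu} and \S\ref{sec_gd}, essentially as a dictionary translation. First I would fix $X := X^\iota$ and, using Thm.\ref{thm_amen2}, attach to the $\wa G$-bundle $(\wa\rho,\otimes,\iota,\eps)$ its class $\efq := Q[\wa\rho] \in H^1(X,QG)$. Applying the sequence of Lemma~\ref{lem_ddc} (valid since $NG$ is a compact Lie group and $G$ is closed, so $p : NG \to QG$ has local sections), I set $\delta(\wa\rho) := \delta(\efq) = \pi_{N,*}\circ\nu(\efq) \in H^2(X,{G'})$ and $\breve\gamma_*(\wa\rho) := \breve\gamma_*\circ\nu(\efq) \in \breve H^2(X,\aG)$; by Rem.\ref{rem_def_gerbe} the latter cocycle pair is the transition datum of a $G$-gerbe $\breve\mG$ over $X$, unique up to isomorphism because $\nu$ is a bijection of pointed sets and the cocycle pair is well-defined up to the equivalence relation on cocycle pairs.

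The core equivalence -- that $\breve\mG$ collapses to a $G$-bundle iff an embedding functor $E : \wa\rho \to {\bf vect}(X)$ exists -- I would obtain directly from Thm.\ref{thm_emb}: condition (1) there (existence of the embedding functor) is equivalent to condition (3), namely that there is an $NG$-cocycle $\efn$ with $p_*[\efn] = \efq$, i.e.\ that $\efq \in p_*(H^1(X,NG))$. But by construction $\breve\gamma_*\circ\nu(\efq)$ has the form $[\{\wa u_{ij}\},\efg]$ where $(\{u_{ij}\},\efg) = \nu(\efq)$; when $\efq = p_*[\efn]$ with $\efn = \{u_{ij}\}$, the definition of $\nu$ (see the proof of Lemma~\ref{lem_Gab}) gives $\nu(\efq) = (\efn,{\bf 1})= \rmd\efn$, so $\breve\gamma_*\circ\nu(\efq) = [\{\wa u_{ij}\},{\bf 1}] = \rmd_*\gamma_*[\efn]$, which by Rem.\ref{rem_def_gerbe} is precisely the $G$-gerbe coming from the $G$-bundle $\mG$ with cocycle $\gamma_*[\efn] \in H^1(X,{\bf aut}G)$ -- that is, the gerbe collapses. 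Conversely, if the gerbe collapses then its cocycle pair is equivalent to one of the form $\rmd\lambda$, which forces $\nu(\efq)$ to be cohomologous to $(\efn',{\bf 1})$ for an $N$-cocycle $\efn'$, hence $\efq = p_*[\efn']$ and Thm.\ref{thm_emb} supplies the embedding functor. Commutativity of the square in Lemma~\ref{lem_ddc} then yields $\delta(\wa\rho) = \delta(\efq) = [{\bf 1}]$ whenever $\efq \in p_*(H^1(X,NG))$, since $p_*(H^1(X,N)) \subseteq \ker\delta$; this gives the asserted vanishing of $\delta(\wa\rho)$ in the embeddable case. The gauge group $\mG$ is the one already produced in Thm.\ref{thm_emb}(2)/Thm.\ref{thm_str_gr}, with class $\gamma_*[\efn]$.

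For the final paragraph ($G \subseteq \sud$): the Chern class $c(\rho) \in H^2(X,\bZ)$ is the one of (\ref{def_cc_tso}), the first Chern class of the line bundle $\mL_\rho$ whose sections are $\mR_\rho = \{\psi \in (\iota,\rho^d) : P_{\rho,\eps,d}\psi = \psi\}$. If $c(\rho) = 0$ then $\mL_\rho$ is trivial, hence admits a nowhere-vanishing section $S \in (\iota,\rho^d)$ with support $P_{\rho,\eps,d}$; fibrewise $\rho$ restricts to an object of some $\wa G_x$ with $G_x \subseteq \sud$ (Thm.\ref{thm_amen}), where the special conjugate relation (\ref{eq_spce}) holds, and since $\mL_\rho$ trivial means $S$ can be chosen globally, a standard partition-of-unity/normalization argument promotes this to the global identity $S^*\rho_*(S) = (-1)^{d-1}d^{-1}\,1$, so $\rho$ is special; then the Doplicher--Roberts construction of conjugates via $P_{d-1}$ (cf.\ Rem.\ref{rem_vb_lc}, \cite[Lemma 3.6]{DR89}) applies to the closure of $\wa\rho$ under subobjects, giving conjugates. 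Finally, if $E : \wa\rho \to {\bf vect}(X)$ is an embedding functor then $E(P_{\rho,\eps,d}) = P_d$ (the antisymmetrizer is built from $\eps$ and $E$ preserves $\eps$), so $E$ carries $\mR_\rho$ isomorphically onto $(\iota,\wedge^d E(\rho))$, whence $c(\rho) = c_1(E(\rho))$; this is exactly the argument of Cor.\ref{cor_dc}, which I would simply invoke.

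I expect the main obstacle to be the second paragraph: tracking carefully through the definition of $\nu$ and $\breve\gamma_*$ to verify that ``$\efq$ lies in the image of $p_*$'' is genuinely \emph{equivalent} to ``$\breve\gamma_*\circ\nu(\efq)$ is the class of a $G$-bundle'' and not merely implied in one direction. The forward direction is the computation $\nu\circ p_* = \rmd_*$ already noted in the proof of Lemma~\ref{lem_ddc}; the reverse direction requires unwinding what it means for a cocycle pair $(\{\wa u_{ij}\},\efg)$ to be cohomologous to one of the form $(\lambda,{\bf 1})$ in $\breve H^2(X,\aG)$ and lifting that equivalence back along $p$ -- here one must use that the ambiguity group for collapsing a $G$-gerbe matches the indeterminacy of lifting a $QG$-cocycle through $p$, which is controlled by $H^1(X,G)$. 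The bijectivity of $\nu$ (Lemma~\ref{lem_Gab}) is what makes this clean, so I would lean on it heavily; everything else is bookkeeping over the diagram (\ref{eq_main}).
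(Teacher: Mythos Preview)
Your proposal follows essentially the paper's approach: define the invariants via $Q$, invoke Thm.\ref{thm_emb} for the equivalence with $p_*$-lifting, use commutativity of the square in Lemma~\ref{lem_ddc} for the gerbe collapse and the vanishing of $\delta$, and then handle the Chern-class paragraph exactly as the paper does (triviality of $\mL_\rho$ gives an isometry $S \in \mR_\rho$, hence $\rho$ is special; conjugates via \cite[Lemma 3.6]{DR89} and \cite[Thm.2.4]{LR97}; $c(\rho) = c_1(E(\rho))$ via the argument of Cor.\ref{cor_dc}).

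Your closing concern about the converse direction (``gerbe collapses $\Rightarrow$ embedding exists'') is well-placed, and you should know that the paper's own proof addresses only the forward implication: it shows that if an embedding exists then $\rmd_* \circ \gamma_*[\efn] = \breve\gamma_* \circ Q[\wa\rho]$, hence $\breve\mG$ is the gerbe of a $G$-bundle, but it is silent on the reverse. So you are not missing a step the paper supplies. The lift you sketch for the converse --- that a class in $\breve H^2(X^\iota,\aG)$ of the form $\rmd_*\lambda$ pulls back along $\breve\gamma_*$ to a class of the form $[\efn',{\bf 1}]$ in $\breve H^2(X^\iota,\nG)$ --- would indeed need further justification, since $\breve\gamma_*$ is not a priori injective; but this is a gap in the statement/proof pair that the paper shares, not a defect of your reconstruction.
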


\begin{proof}
We pick a cocycle pair $\efb$ in the cohomology class 
$\breve \gamma_* (\wa \rho)$ 
and define $\breve \mG$ as the $G$-gerbe with transition maps defined by $\efb$ according to Rem.\ref{rem_def_gerbe} and Lemma \ref{lem_ddc}. 
Embeddings $E : \wa \rho \to {\bf vect}(X^\iota)$ are in one-to-one correspondence with $NG$-cocycles $\efn$ such that $p_*[\efn] = Q[\wa \rho]$ and the associated $G$-bundles $\mG$ define cohomology classes $\gamma_* [\efn] \in H^1(X^\iota,{\bf aut}G)$. Commutativity of the square in (\ref{eq_main}) implies that 
\[
\rmd_* \circ \gamma_*[\efn] 
\ = \ 
\breve \gamma_* \circ Q[\wa \rho] 
\ \in \breve H^2(X^\iota,\aG) \ ,
\]
and this proves that $\breve \mG$ is isomorphic to the gerbe defined by $\mG$ according to Rem.\ref{rem_def_gerbe}.
The relation between the existence of $E$ and the vanishing of $\delta(\wa \rho)$ is proved applying Thm.\ref{thm_emb} and Lemma \ref{lem_ddc}. 
%
%
%
Let now $G \subseteq \sud$. If $c(\rho) = 0$ then $\mR_\rho$ is a free Hilbert $(\ii)$-module and there is an isometry $S \in \mR_\rho$. So that $\rho$ is special, and \cite[Lemma.3.6]{DR89} implies that the conjugate $\ovl \rho$ is a subobject in $\wa \rho$. Using \cite[Thm.2.4]{LR97} we conclude that the tensor powers $\rho^r$, $r \in \bN$, and their subobjects, have conjugates.
\end{proof}

The previous theorem suggests that in general the dual object of a symmetric tensor \sC category is a nonabelian gerbe rather than a group bundle. Clearly, we should say in precise terms in which sense a tensor \sC category is the {\em representation category} of a gerbe. This could be done considering the notion of action of gerbes on bundles of $2$-Hilbert spaces. An alternative point of view is to consider Hilbert \sC bimodules rather than bundles: this situation is analogous to what happens in twisted $K$-theory, where we can use equivalently (Abelian) gerbes or bimodules with coefficients in a continuous trace \sC algebra to define the same $K$-group. These aspects will be clarified in a forthcoming paper (\cite{Vasf}).

\begin{ex}
\label{ex_su2}
Let $n \in \bN$ and $S^n$ denote the $n$-sphere. We discuss the map $p_*$ in the case $G = \sud$, $d > 1$:
\[
p_* : 
H^1 ( S^n , \ud ) 
\ \longrightarrow \
\sym (S^n , \wa \sud ) \simeq
H^1(S^n,\bT) \simeq
H^2(S^n,\bZ)
\ .
\]
A well-known argument implies $H^1 ( S^n , \ud ) \simeq \pi_{n-1}(\ud)$ (\cite[Ch.7.8]{Hus}); thus, by classical results (\cite[Ch.7.12]{Hus}) we have
\[
 H^1( S^2 , \ud ) \simeq H^1( S^4 , \ud ) \simeq \bZ  
 \ \ , \ \  
 H^1( S^1 , \ud ) \simeq H^1( S^3 , \ud ) \simeq {\bf 0}
 \ ;
\]
moreover,
\[
 H^2( S^2 , \bZ )   \simeq \bZ  
 \ \ , \ \  
 H^2( S^n , \bZ )   \simeq {\bf 0} \ , \ n \neq 2 \ .
\]
Thus the cases $S^1$, $S^3$ are trivial. In the other cases, we have the following:
\begin{itemize}
\item  $n>2$. The map $p_*$ is trivial and the unique element of $\sym ( S^n , \wa \sud )$ is the class of the trivial bundle. Now, it is a general fact that if $\mE \to X$ is a vector bundle, then the continuous bundle $( \mE , \mE )$ is trivial if and only if $\mE$ is the tensor product of a trivial bundle by a line bundle. In the case $X = S^n$, $n \neq 2$, every line bundle is trivial, thus we conclude that $( \mE , \mE )$ is trivial if and only if $\mE$ is trivial. Since $( \mE , \mE )$ is generated as a $C(X)$-module by the special unitary group of $\mE$, we conclude that $\mE \to S^n$ is trivial if and only if $\mcSUE \to S^n$ is trivial. Thus, $\wa \mcSUE$ is trivial for every $\mE \to S^n$, in spite of the fact that $\mcSUE$ is trivial if and only if $\mE = S^n \times \bC^d$. In particular, this holds for $S^{2m}$, $m = 2 , \ldots$, where nontrivial vector bundles exist.
\item $n=2$. We recall that the Chern character
\[
Ch : K^0(S^2) \to H^0 (S^2,\bZ) \oplus H^2 (S^2,\bZ)
\]
is a ring isomorphism. The term $H^0 (S^2,\bZ) \simeq$ $\bZ$ corresponds to the rank, whilst $H^2(S^2,\bZ) \simeq$ $\bZ$ is the first Chern class. By well-known stability properties of vector bundles (see \cite[Ch.8, Thm.1.5]{Hus} or \cite[II.6.10]{Kar}), we find that rank $d$ vector bundles $\mE , \mE' \to S^2$ are isomorphic if and only if $[\mE] = [\mE'] \in K^0(S^2)$, i.e. $Ch [\mE] = Ch [\mE']$. This implies that $p_*$ is one-to-one for $n=2$.
\end{itemize}
\end{ex}

\begin{ex}
\label{ex_rn}
We define $R_d \subset \sud$ as the group of diagonal matrices of the type $g := {\mathrm{diag}}(z , \ldots , z)$, where $z \in \bT$ is a root of unity of order $d$. Then $NR_d = \ud$ acts trivially on $R_d$ and $R_d' = R_d$. We have the exact sequence of pointed sets
\[
H^1(S^2,R_d) \stackrel{i_*}{\to} 
H^1(S^2,\ud) \stackrel{p_*}{\to} 
H^1(S^2,QR_d) \stackrel{\delta}{\to} 
H^2(S^2,R_d) 
\ .
\]
Now, every principal $R_d$-bundle over $S^2$ is trivial, and the universal coefficient theorem yields $H^2(S^2,R_d) \simeq {\mathrm{Hom}}(\bZ,R_d) \simeq \bZ_d$. Thus we have
\[
0 \to 
\bZ \stackrel{p_*}{\to} 
H^1(S^2,QR_d) \stackrel{\delta}{\to} 
\bZ_d
\ ,
\]
and $p_*$ is injective. We now prove that there is a left inverse $s : \bZ_d \to H^1(S^2,QR_d)$ for $\delta$ with trivial intersection with $p_*(\bZ)$. This suffices to prove that $H^1(S^2,QR^d) \simeq \bZ \oplus \bZ_d$. To this end, we embed $R_d$ in $\bT$ and regard each $R_d$-$2$-cocycle $\efg := \{ g_{ijk} \}$ as a $\bT$-$2$-cocycle. In this way, the argument of the proof of \cite[Thm.10.8.4(2)]{Dix} implies that there is an $1$-$\ud$-cochain $\efu := \{ u_{ij} \}$ such that $u_{ij} u_{jk} u_{ik}^{-1} = g_{ijk}$. Thus, we define the map
\[
s : H^2(S^2,\bT) \to \breve H^2 ( S^2 , R_d \lto \ud ) \simeq H^1(S^2,QR_d)
\ \ , \ \
s [\efg] := [ \efu , \efg ]
\ ,
\]
which clearly yields the desired left inverse (recall the definition of $\delta$). We conclude that
\[
\sym(S^2,\wa R_d) \simeq \bZ \oplus \bZ_d \ .
\]
The first direct summand corresponds to the term $H^1(S^2,\ud)$ whose isomorphism with $\bZ$ is realized by means of the determinant (see \cite[\S 7.8]{Hus}); this implies that the projection of $\sym(S^2,\wa R_d)$ on $\bZ$ is the Chern class. On the other side, by construction the projection on $\bZ_d$ corresponds to the class $\delta$.
\end{ex}

\begin{ex}
\label{ex_T}
Let $G \subset \ud$ be as in Ex.\ref{ex_T0} and $\wa \rho$ be a $\wa G$-bundle; then it is easy to check that the set of embeddings $\wa \rho \to {\bf vect}(X)$ is in one-to-one correspondence with vector bundles $\mE \to X$ such that $(\mE,\mE) \simeq (\rho,\rho)$.
In particular when $X$ is the $3$--sphere $S^3$ then every vector bundle $\mE \to S^3$ is trivial and $\wa \rho$ admits an embedding if and only if $(\rho,\rho)$ is trivial. On the other side, the (classical) Dixmier-Douady invariant is a complete invariant for bundles with fibre $\bM_d$ and base space $S^3$, thus we conclude that
\[
\delta : \sym(S^3,\wa G) \to H^2(S^3,G) = H^3(S^3,\bZ) = \bZ
\]
is an isomorphism.
\end{ex}

\noindent {\bf Acknowledgements.} The author would like to thank Mauro Spera for drawing his attention to gerbes, and an anonymous reviewer for suggesting several improvements on the first version of the present paper.



\begin{thebibliography}{99}

\bibitem{Ati}
M.F. Atiyah: $K$-Theory, Benjamin, New York, 1967.

\bibitem{BS08}
J.C. Baez, D. Stevenson: The classifying space of a topological $2$--group, math/0801.3843v1, 
to appear in proceedings of the 2007 Abel Symposium (2008).


\bibitem{CP95}
T. Ceccherini, C. Pinzari: Canonical actions on $\mO_\infty$, J. Funct. Anal. 103 (1992) 26-39.

%
%
%
%
%
%

\bibitem{Cun77}
J. Cuntz: Simple {\it C*}-algebras Generated by Isometries, 
Comm. Math. Phys. 57 (1977) 173-185.

%
%
%
%

\bibitem{Dix}
J. Dixmier: {\it C*}-algebras, North-Holland Publishing Company, Amsterdam - New
York . Oxford, 1977.

%
%

\bibitem{DR87}
S. Doplicher, J.E. Roberts: Duals of Compact Lie Groups Realized in the Cuntz Algebras and Their Actions on {\it C*}-algebras, {J. Funct. Anal.} {74} (1987), 96-120.


\bibitem{DR89}
S. Doplicher, J.E. Roberts: A New Duality Theory for Compact Groups, {Inventiones Mathematicae} {98} (1989), 157-218.


\bibitem{DR90}
S. Doplicher and J.E. Roberts: Why there is a field algebra with a compact gauge group describing the superselection structure in particle physics, Commun. Math. Phys. 131 (1990), 51-107.


\bibitem{DR89A}
S. Doplicher, J.E. Roberts: Endomorphisms of \sC algebras, Cross Products and Duality for Compact Groups, {Annals of Mathematics} {130} (1989), 75-119.



\bibitem{Dup74}
M.J. Dupr\'e: Classifying Hilbert bundles I, J. Funct. Anal. 15 (1974), 244-278.


%
%


\bibitem{Hir}
F. Hirzebruch, Topological Methods in Algebraic Geometry, Springer-Verlag, 1966.


\bibitem{Hus}
D. Husemoller: Fiber Bundles, Mc Graw-Hill Series in Mathematics, 1966.


\bibitem{IK02}
M. Izumi, H. Kosaki: On a Subfactor Analogue of the Second Cohomology, Rev. Math. Phys. 14 (2002), 733-757.


\bibitem{KPW04}
T. Kajiwara, C. Pinzari, Y. Watatani: Jones index theory for Hilbert {\it C*}-bimodules and its equivalence with conjugation theory, J. Funct. Anal. 215 (1) (2004), 1-49.


\bibitem{Kar}
M. Karoubi: {$K$-Theory}, Springer Verlag, Berlin - Heidelberg - New York, 1978.


\bibitem{Kas88}
G.G. Kasparov, Equivariant $KK$-Theory and the Novikov Conjecture, {Invent. Math.} {91} (1988), 147-201. 


%
%

\bibitem{KW95}
E. Kirchberg, S. Wassermann: Operations on Continuous Bundles of \sC algebras, Mathematische Annalen 303 (1995), 677-697.


\bibitem{LR97}
R. Longo, J.E. Roberts: A Theory of Dimension, {$K$-Theory} {11} (1997), 103-159.

%
%
%

\bibitem{Mit02}
P.D. Mitchener: $KK$-Theory of \sC categories and the Analytic Assembly Map, $K$-Theory 26(4) (2002), 307-344.

%
%
%

\bibitem{NT04}
V. Nistor, E. Troitsky: An index for gauge-invariant operators and the Dixmier-
Douady invariant, Trans. AMS. 356 (2004), 185-218

%
%
%
%
%
%

%
%


\bibitem{Seg68}
G. Segal, Equivariant $K$-theory, Inst. Hautes \'Etudes Sci. Publ. Math. 34 (1968), 129-151.

%
%

\bibitem{Vas}
E. Vasselli: Continuous fields of {\it C*}-algebras Arising from Extensions of Tensor {\it C*}-categories, {J. Funct. Anal.} {199} (2003), 122-152.

\bibitem{Vas05}
E. Vasselli: Crossed Products by Endomorphisms, Vector Bundles and Group Duality, Int. J. Math. 16 (2), (2003).


\bibitem{Vas05B}
E. Vasselli: The {\it C*}-algebra of a vector bundle and fields of Cuntz algebras, J. Funct. Anal. 222(2) (2005), 491-502.

\bibitem{Vas06}
E. Vasselli: Bundles of {\it C*}-categories, J. Funct. Anal. 47 (2007) 351-377.


\bibitem{Vas07}
E. Vasselli: Some remarks on group bundles and {\it C*}-dynamical systems, Comm. Math. Phys. 274(1) 253-276 (2007).

\bibitem{Vasf}
E. Vasselli: Nonabelian gerbes, Hilbert bimodules and $K$-theory, in preparation.




\bibitem{Zit05}
P. Zito: 2-\sC categories with non-simple units, Adv. Math. 210 (2007) 122-164.

\end{thebibliography}
\end{document}